\newtheorem{theorem}{Theorem}
\newtheorem{definition}{Definition}
\newtheorem{lemma}{Lemma}
\newtheorem{proposition}{Proposition}
\newtheorem{corollary}{Corollary}
\newcommand{\mc}[1]{\ensuremath{\mathcal{#1}}}
\newcommand{\bs}[1]{\ensuremath{\boldsymbol{#1}}}
\def\ao{$\mathsf{and/or}$ }
\def\a{$\mathsf{and}$ }
\def\o{$\mathsf{or}$ }
\def\true{\mathsf{true}}
\def\false{\mathsf{false}}
\def\rat{\mathsf{rat}}
\newcommand{\I}{\mathcal{I}}
\newcommand{\M}{\mathcal{M}}
\newcommand{\e}{\mathtt{e}}
\newcommand{\IP}{\mathbb{P}}
\newcommand{\G}{\mathtt{(G)}}
\newcommand{\E}{\mathtt{(E)}}
\newcommand{\Cat}{\mathtt{Cat}}
\newcommand{\Lab}{\mathtt{Lab}}
\newcommand{\cst}{\mathtt{cst}}
\begin{document}

\title{Generalised and Quotient Models for Random And/Or~Trees\\and\\Application to Satisfiability}

\author{Antoine Genitrini\thanks{Sorbonne Universit{\'e}s, UPMC Univ. Paris 06, CNRS, LIP6 UMR 7606, 4 place Jussieu 75005 Paris.
\texttt{Antoine.Genitrini@lip6.fr}.}
~and C\'ecile Mailler\thanks{Department of Mathematical Sciences, University of Bath, BA2 7AY Bath, UK.
\texttt{c.mailler@bath.ac.uk}.}}

\maketitle

\begin{abstract}
This article is motivated by the following satisfiability question:
pick uniformly at random an \ao Boolean expression of length $n$, built on a set of $k_n$ Boolean variables.
What is the probability that this expression is satisfiable? asymptotically when $n$ tends to infinity? 

The model of random Boolean expressions developed in the present paper is the model of Boolean Catalan trees, 
already extensively studied in the literature for a \emph{constant} sequence $(k_n)_{n\geq 1}$. 
The fundamental breakthrough of this paper is to generalise the previous results for any (reasonable) 
sequence of integers $(k_n)_{n\geq 1}$, which enables us, in particular, to solve the above satisfiability question.

We also analyse the effect of introducing a natural equivalence relation on the set of Boolean expressions.
This new \emph{quotient} model happens to exhibit a very interesting threshold (or saturation) phenomena
at $k_n = \nicefrac{n}{\ln n}$.
 
\vspace{\baselineskip}
\noindent{\bf Keywords: }Boolean formulas/functions; Catalan trees; Equivalence relation; Probability distribution; Satisfiability; Analytic combinatorics.

\end{abstract}

\section{Introduction}

For several decades, satisfiability problems have been extensively studied by
computer scientists and probabilists, as well as statistical physicists.
In this paper, we focus on the probabilistic version of satisfiability problems:
what is the probability that a \emph{random} Boolean expression is satisfiable?
The answer to this question obviously depends on the distribution 
considered on the set of Boolean expressions.

One of the most studied satisfiability problems is the $3$--{\sc sat} problem.
It consists in choosing uniformly at random an expression among conjunctions of $n$ clauses,
each clause being a disjunction of three literals - where literals are chosen among a set of $k_n$ variables and their negations.
What is the probability that such a random Boolean expression is satisfiable? when $n$ tends to infinity?

This question is already partially answered -- see for example~\cite{AM06}:
the following phase transition is proven.
If the ratio $\nicefrac{k_n}{n}$ is small enough, 
then the random expression is satisfiable with probability tending to~$1$ when $n$ tends to infinity, 
whereas if the ratio $\nicefrac{k_n}{n}$ is large enough, then, this probability tends to~$0$.
Refining this statement is the challenging aim of a large literature.

There are many other satisfiability problems.
The $K$--{\sc sat} problem is for example the object of a recent breakthrough 
by Coja-Oghlan and Panagiotou~\cite{CP13} and Coja-Oghlan~\cite{Coja14},
who obtained the existence of a sharp threshold when $K$ tends to infinity.
The $2$-{\sc xorsat} problem is studied by Daud{\'e} and Ravelomanana~\cite{DR11}, using Analytic Combinatorics 
to exhibit and describe precisely a phase transition phenomenon.

\vspace{\baselineskip}
The aim of the present paper is to define and study a new satisfiability model 
(i.e. a new distribution on the set of Boolean expressions) inspired by the 
literature on quantitative logics.

Quantitative logics, which origin might go back to the work of Woods~\cite{Woods},
aims at answering this question:
Which Boolean function does a \emph{random} Boolean expression represent?
Once again, the answer to this question deeply depends
on the model of randomness chosen for Boolean expressions.

The Catalan tree model, first studied by Lefmann and Savick\'y~\cite{LS97}, is defined as follows:
A Boolean tree is a binary plane rooted tree (i.e. a Catalan tree) whose internal nodes are labelled by
the connectives $\mathtt{and}$ or $\mathtt{or}$
and whose leaves are labelled by $k$ variables and their negations.
Pick up uniformly at random a tree among Boolean trees of size $n$, and denote by 
$\mathbb P_{n,k}$ the distribution it induces on the set of Boolean functions.
Lefmann and Savick\'y first proved the existence of a limiting probability distribution $\mathbb P_k$
on Boolean functions when the size $n$ of the random Boolean expression tends to infinity.

Since the seminal paper by Chauvin et al.~\cite{CFGG04},
the Analytic Combinatorics' community aims at understanding better the Catalan tree distribution $\mathbb P_k$ 
(and similarly defined distributions) on the set of Boolean functions.
In particular, Kozik~\cite{kozik08} proves, in the Catalan tree model, 
an asymptotic (when $k$ tends to infinity) relation between the probability of a given function 
and its \emph{complexity}
(i.e. the complexity of a Boolean function being the size of the smallest tree representing it).
His powerful approach, the \emph{pattern theory},
easily classifies and counts large expressions according to specific structural constraints.
It will be generalised in the present paper.

Remark that in the Catalan tree model defined above, 
the size $n$ of the Boolean expressions tend to infinity while the number $k$ of literals labelling them is fixed.
For technical reasons, $k$ is then sent to infinity in order to obtain an asymptotic estimate of the probability of a given Boolean function.
It means that the trees we consider have a lot of repetitions in their leaves: 
it is legitimate to ask if this bias the distribution induced on the set of Boolean functions.
Genitrini and Kozik~\cite{GKZ07,GK12} have proposed another model where 
random Boolean expressions are built on an infinite set of variables. 
This approach avoids the bias induced by letting $n$ tend to infinity while $k$ stays fixed.




Our paper extends the Catalan model in order both
(1) to let $n$ and $k$ tend to infinity together and
(2) to fit in the satisfiability context.

\vspace{\baselineskip}
Following the extended abstract~\cite{GM14}, we also look at the influence of
a natural notion of equivalence on the set of Boolean expressions and functions.
Roughly speaking, we say that two expressions or functions are equivalent 
if the second one can be obtain from the first one by renumbering the variables.
As an example, the expressions $(x_1 \;\mathtt{and}\; x_2)$ and $(x_{12} \;\mathtt{and}\; x_{3})$
are equivalent.

We will describe and study in parallel these two models (with an without equivalence classes) where
the number of variables and the size of expressions jointly tend to infinity.
Since the proofs will be very similar in both models, we will try general notations that fit both models.
The model without equivalence classes will permit, as a corollary to answer 
the satisfiability problem in the context of Catalan Boolean expressions.
It will be very interesting to see that,
although the proofs are completely similar for both models,
the probability distributions induced on the set of Boolean functions behave differently:
the introduction of equivalence classes gives birth to an interesting and quite mysterious threshold phenomenon.

The paper is organised as follows.
In Section~\ref{sec:model} we define our two new models: 
the \emph{generalised} model where the number of variables depends on the size of the considered trees
and the \emph{quotient} model where we introduce a natural equivalence relation on Boolean trees and functions. 
Section~\ref{sec:results} is devoted to stating and discussing our three main results:
the satisfiability question for random Catalan expressions;
the link between the probability of a Boolean function (resp. a class of Boolean functions)  and its \emph{complexity},
both in the generalised and the quotient models.
Section~\ref{sec:technical} and Section~\ref{sec:patterns} contain the technical core of the paper:
Section~\ref{sec:technical} is an analytic part focusing mainly
on the difficulties arising from the introduction of the equivalence relation,
while Section~\ref{sec:patterns} concerns both models and discusses Kozik's pattern theory.
Finally Section~\ref{sec:proba} contains the proofs of our main results.

\section{Description of the two models}\label{sec:model}

\subsection{Contextual definitions}

A {\bf Boolean function} is a mapping from $\{0,1\}^{\mathbb{N}}$ into $\{0,1\}$. 
The two constant functions $(x_i)_{i\geq 1}\mapsto 1$ and $(x_i)_{i\geq 1}\mapsto 0$ are respectively called $\true$ and $\false$. 

An {\bf \ao tree} is a binary plane tree whose leaves are labelled by literals, 
i.e. by elements of $\{x_i, \bar{x}_i\}_{i\in\mathbb{N}}$, 
and whose internal nodes are labelled by the connective \a or the connective $\mathsf{or}$, 
respectively denoted by $\land$ and $\lor$.
We will say that $x_i$ and $\bar{x}_i$ are two different literals 
but they are respectively the positive and the negative version of the same variable $x_i$. 
Every \ao tree is equivalent to a Boolean expression and thus represents a Boolean function: 
for example, the tree in Fig.~\ref{fig:true} is equivalent to the expression 
$([x_1\lor(\lnot{x}_1\lor x_2)]\lor x_3)\lor (x_4\land x_1)$,
where $\lnot x = 1-x$ for all $x\in\{0,1\}$,
and represents the constant function $\true$.

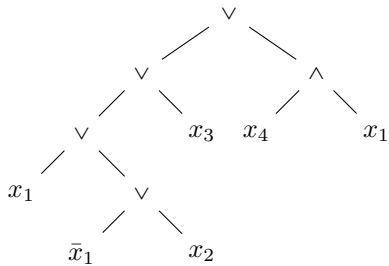
\begin{figure}
\centering
\begin{tikzpicture}[style={level distance=1cm},level 1/.style={sibling distance=2.9cm},level 2/.style={sibling distance=2cm}, scale=0.8]
 \node [circle] (z){$\vee$}
	child {node [circle] (x) {$\vee$}
		child{node [circle] (a) {$\vee$}
				child {node [circle] (y) {$x_1$}}
				child {node [circle] (w) {$\vee$}
					child {node [circle] (b) {$\bar{x}_1$}}
					child {node [circle] (c) {$x_2$}}
					}
				}
		child {node [circle] (d) {$x_3$}}
		}
	child {node [circle] (g) {$\wedge$}
 				child {node [circle] (e) {$x_4$}}
 				child {node [circle] (f) {$x_1$}} 
				}
  ;
\end{tikzpicture}
\caption{An \ao tree computing the constant function $\true$.}
\label{fig:true}
\end{figure}

The {\bf size} of an \ao tree is its number of leaves: remark that,
for all $n\geq 1$, there is infinitely many \ao trees of size $n$.
Finally we define the {\bf tree-structure} of an \ao tree
to be the \ao tree where the labels of the leaves (but not of the internal nodes)
have been removed.

\begin{definition}
The {\bf complexity} of a non constant Boolean function $f$, 
denoted by $L(f)$, is defined to be the size of its {\bf minimal trees}, 
i.e. the size of the smallest trees computing $f$. 
The complexity of $\true$ and $\false$ is defined to be~0.
\end{definition}

Although a Boolean function is defined on an infinite set of variables, 
it may actually depend only on a finite subset of \emph{essential variables}.
\begin{definition} 
Given a Boolean function $f$, we say that the variable $x$ is {\bf essential} for $f$ if, and only if,
$f_{|x\leftarrow0} \not\equiv f_{|x\leftarrow1}$ 
(where $f_{|x\leftarrow \alpha}$ is the restriction of $f$ to the subspace where $x=\alpha$). 
We denote by $E(f)$ the number of essential variables of $f$.
\end{definition}

Remark that the complexity and the number of essential variables of a Boolean function are related by the following
inequalities: $E(f)\leq L(f) \leq 2^{E(f)+2}$ (see e.g. \cite[p. 77--78]{FS09} for the second inequality).
Note that, asymptotically when $E(f)$ tends to infinity a tight asymptotic upper-bound is $\nicefrac{2^{E(f)}}{E(f)}$, 
as proved by Lupanov~\cite{Lupanov58} for the upper bound and Lutz~\cite{Lutz92} for the lower bound.

In the whole paper, our models propose a way to make $n$ and $k$ tend to infinity together:
\begin{definition} 
Let $(k_n)_{n\geq 1}$ be an increasing sequence of integers such that
$k_n$ tends to infinity when $n$ tends to infinity.
\end{definition}

\subsection{The generalised Catalan tree model}

Let us recall the definition of the Catalan tree model defined and studied by 
Paris et al.~\cite{PVW94}, Lefmann \& Savick{\'y}~\cite{LS97}, Chauvin et al.~\cite{CFGG04} and Kozik~\cite{kozik08}.
In those papers, the authors fix an integer $k\geq 1$ and consider the uniform distribution
on \ao trees of size $n$ whose leaf-labels are constrained to be in $\{x_1, \bar x_1, \ldots, x_k, \bar x_k\}$.
They study the induced distribution on the set of Boolean variables and prove that this distribution 
converges to a limit distribution $\mathfrak{p}_k$ when the size $n$ of the trees tends to infinity.
Given a Boolean function $f$, they then prove asymptotic theorems for $\mathfrak{p}_k(f)$ when $k$ tends to infinity. 
In this approach, the order of the two limits (on $n$ and then on $k$) is a priori important.

We define first the generalised Catalan tree model, that is 
a natural extension of the previous model.

\vspace{\baselineskip}
\noindent The model $\G$ is defined as follows:
\begin{enumerate}[(1)]
\item consider the uniform distribution on \ao trees of size $n$ which
leaf-labels belong to $\{x_1, \bar x_1$, $\ldots$, $x_{k_n}, \bar x_{k_n}\}$,
\item denote by $\mathbb P_n$ the distribution it induces on the set of Boolean functions,
and call this new distribution the {\bf generalised Catalan tree distribution}.
\end{enumerate}
Remark that there are $A_n$ \ao trees of size $n$ labelled with $k_n$ variables, with 
\begin{equation}\label{eq:cat}
A_n= 2^{n-1} (2k_n)^n \cdot\Cat_n, \hspace*{2cm} \text{where } \Cat_n = \frac1{n}\binom{2n-2}{n-1},
\end{equation}
i.e. $\Cat_n$ is the number of binary plane trees having $n$ leaves.

For all Boolean function $f$, we denote by $A_n(f)$ the number of \ao trees of size $n$
labelled with $k_n$ variables that compute $f$.
Thus, by definition,
\[\mathbb P_n (f) = \frac{A_n(f)}{A_n}.\]

\subsection{The quotient Catalan tree model}

A second natural generalisation of the Catalan tree model is obtained
by introducing equivalence classes of Boolean trees and functions.
The idea is the following: the functions 
$(x_i)_{i\geq 1} \mapsto x_1 \land x_2$ and $(x_i)_{i\geq 1} \mapsto x_{38} \land \bar x_{12}$
can be seen as two realisations of the function \emph{conjunction}.

Informally, two \ao trees are equivalent if the leaves of the first one can be relabelled (and negated) without collision
in order to obtain the second tree.
We define formally this equivalence relation as follows.
\begin{definition}
Let $A$ and $B$ be two \ao trees.
Trees $A$ and $B$ are {\bf equivalent} if 
\begin{enumerate}[(i)]
\item their tree-structures are identical;
\item two leaves are labelled by the same variable in $A$ if and only of they are labelled by the same variable in $B$;
\item two leaves are labelled by the same literal in $A$ if and only of they are labelled by the same literal in $B$.
\end{enumerate}
\end{definition}
This equivalence relation on Boolean trees \emph{induces straightforwardly
an equivalence relation on Boolean functions}.
Note that all functions of an equivalence class have the same complexity and the same number of essential variables.
In the following, we will denote by $\langle f\rangle$ the equivalence class of the function~$f$.
We denote by $L\langle f \rangle= L(f)$
(resp. $E\langle f \rangle=E(f)$)  the common complexity (resp. number of essential variables)
of the elements of $\langle f\rangle$. 

\begin{definition}
Let $\langle f\rangle$ be a class of Boolean functions.
The {\bf multiplicity} of the class $\langle f \rangle$, is given by
\[R\langle f\rangle = L\langle f \rangle - E\langle f \rangle.\]
It corresponds to the number of repetitions of variables in a minimal 
tree of a function from $\langle f\rangle$.
\end{definition}

Recall that $(k_n)_{n\geq 1}$ is an increasing sequence of integers that tends to infinity 
when $n$ tends to infinity. 
In the following, we only consider equivalence classes of trees having at least one element 
whose leaf-labels are in $\{x_1, \bar x_1$, $\ldots$, $x_{k_n}, \bar x_{k_n}\}$.
It means that we restrict ourselves to trees of size $n$ labelled by at most $k_n$ different variables.
Note that if $k_n \geq n$ for all $n\geq 1$, 
this is not a restriction because a tree of size $n$ cannot contain more that $n$ 
different leaf-labels.

\vspace{\baselineskip}
The model $\E$ is defined as follows:
\begin{enumerate}[(1)]
\item consider the uniform distribution on classes of equivalence of trees of size $n$ 
(labelled with at most $k_n$ different variables),
\item the distribution it induces on the set of equivalence classes of Boolean function is denoted by 
$\mathbb P_n$ and called the {\bf quotient Catalan tree distribution}.
\end{enumerate}

We denote by $A_n$ the number of equivalence classes of trees of size $n$ 
(in which at most $k_n$ different variables appear as leaf-labels).
Given a class of Boolean functions $\langle f\rangle$, 
we denote by $A_n\langle f\rangle$ the number of equivalence classes
of trees of size~$n$ (labelled with at most $k_n$ different variables) 
that compute a function of $\langle f\rangle$.
We thus have
\[\mathbb P_n\langle f\rangle = \frac{A_n\langle f\rangle}{A_n}.\]

\begin{proposition}\label{prop:enum}
The number of classes of trees of size $n$ satisfies:
\[A_n = \Cat_n \cdot \sum_{p=1}^{k_n} {n \brace p} 2^{2n-1-p},\]
where $\Cat_n$ is the number of (unlabelled) binary planar trees having $n$ leaves (cf.~Equation~\eqref{eq:cat}),
and where ${n \brace p}$ is the Stirling number of the second kind.\footnote{In Proposition~\ref{prop:enum}, ${n \brace p}$ is the number 
of partitions of $n$ objects in $p$ non-empty subsets (see e.g. \cite[p. 735--737]{FS09}).}
\end{proposition}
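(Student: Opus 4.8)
The plan is to set up an explicit bijection between the equivalence classes of \ao trees of size $n$ (labelled with at most $k_n$ distinct variables) and triples $(\tau,\pi,\pi')$, where $\tau$ is a \emph{labelled tree-structure} (a binary plane tree with $n$ leaves whose $n-1$ internal nodes each carry a label in $\{\land,\lor\}$), $\pi$ is a set partition of the leaf set, and $\pi'$ is a refinement of $\pi$ each of whose blocks of $\pi$ is split into at most two blocks. Using the plane embedding one first orders the leaves of any \ao tree from left to right, identifying the leaf set canonically with $[n]=\{1,\dots,n\}$. Given an \ao tree $A$, the tree-structure of $A$ provides $\tau$; the blocks of $\pi$ are the maximal sets of leaves of $A$ carrying a common variable; and the blocks of $\pi'$ are the maximal sets of leaves carrying a common literal. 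Since a variable $x_i$ has exactly the two literals $x_i,\bar x_i$, each block of $\pi$ is indeed split into at most two blocks of $\pi'$. By the very definition of the equivalence relation (conditions (i)--(iii)), two trees are equivalent if and only if they produce the same triple $(\tau,\pi,\pi')$. Conversely, every such triple is realised by a tree using at most $k_n$ variables: if $\pi$ has $p$ blocks, assign the variable $x_j$ to the $j$-th block for $1\le j\le p$ and, inside each block, give the positive literal to one sub-block of $\pi'$ and the negative literal to the other; this is possible exactly when $p\le k_n$, which is precisely the restriction imposed in the definition of model $\E$.

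With the bijection in hand it remains to count triples. There are $\Cat_n$ binary plane trees with $n$ leaves (cf.~\eqref{eq:cat}) and $2^{n-1}$ ways to label their $n-1$ internal nodes, hence $\Cat_n\,2^{n-1}$ tree-structures $\tau$, a factor independent of everything else. For a fixed number of blocks $p$, the partitions $\pi$ of $[n]$ into exactly $p$ non-empty blocks number ${n\brace p}$ by definition of the Stirling numbers of the second kind. Finally, for $\pi$ with block sizes $m_1,\dots,m_p$ (so $m_1+\cdots+m_p=n$), the number of admissible refinements $\pi'$ is $\prod_{i=1}^p c(m_i)$, where $c(m)$ counts partitions of an $m$-element set into at most two non-empty blocks. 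Since ${m\brace 1}=1$ and ${m\brace 2}=2^{m-1}-1$, we get $c(m)=2^{m-1}$, so the number of refinements is $\prod_{i=1}^p 2^{m_i-1}=2^{n-p}$, which depends on $\pi$ only through $p$.

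Combining the three factors and summing over the admissible values $1\le p\le k_n$ gives
\[
A_n = \Cat_n\,2^{n-1}\sum_{p=1}^{k_n}{n\brace p}\,2^{n-p}
     = \Cat_n\sum_{p=1}^{k_n}{n\brace p}\,2^{2n-1-p},
\]
as claimed. The only genuinely delicate point is the bookkeeping for the literal refinement: one must remember that a relabelling of a tree is also allowed to \emph{negate} variables, so inside a variable block the two literals play symmetric roles and the relevant object is an \emph{unordered} partition into at most two classes. This is what produces the factor $2^{m-1}$ per block (rather than $2^m$), and hence the exponent $2n-1-p$.
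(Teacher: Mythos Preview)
Your proof is correct and follows the same decomposition as the paper: tree-structure, internal-node labels, partition of leaves by variable, and refinement by literal. The paper's own proof is a four-line sketch listing these factors; you are more explicit about the bijection and, in particular, about why the literal refinement contributes $2^{n-p}$ (via $c(m)=2^{m-1}$ per block), a point the paper states without justification.
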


\begin{proof}
An equivalence class of \ao trees can be seen as
\begin{itemize}
\item a binary plane tree (factor $\Cat_n$)
\item whose internal nodes are labelled by \a and \o connectives (factor $2^{n-1}$),
\item whose leaves are partitioned onto $1\leq p \leq k_n$ parts (factor ${n \brace p}$),
\item each of these parts being then partitioned onto two parts (one on them being possibly empty: factor $2^{n-p}$).
\end{itemize}
\end{proof}

{\bf Remark on notations:} We have already used the notation $A_n$ to define the model $\G$. 
We will keep the same notation for these two distinct objects because they will have the same role in the proofs.
But formally, we have
\[A_n^{\G} = \Cat_n \cdot 2^{2n-1} \cdot k_n^n \quad \text{ and }\quad A_n^{\E} = \Cat_n \cdot \sum_{p=1}^{k_n} {n \brace p} 2^{2n-1-p}.\]

\section{Main results and discussion}\label{sec:results}

We have defined the two models we are interested in: 
the generalised and the quotient Catalan trees distributions.
Both distributions are called $\mathbb P_n$ for simplicity's sake, 
but we will use $\mathbb P_n^{\G}$ and $\mathbb P_n^{\E}$ when the precision is needed.
The aim of this paper is to study the behaviour of both distributions when the size $n$ 
of the considered trees tends to infinity.

Let us remark that the distribution induced by $\G$ is based on an uniform distribution among trees
of the same size. But the distribution induced by $\E$ lies on an uniform distribution among classes
of trees of the same size. Obviously both induced distributions on Boolean functions are distinct.

\begin{theorem}[Model $\G$]\label{thm:thetaG}
Let $(k_n)_{n\geq 1}$ be an increasing sequence of integers tending to infinity when $n$ tends to infinity.
For all Boolean functions $f$, there exists a positive constant $\alpha^\G_f$ such that, 
asymptotically when $n$ tends to infinity, 
\[\mathbb P_n(f) \sim \alpha^\G_f \cdot\left(\frac1{k_n}\right)^{L(f)+1}.\]
\end{theorem}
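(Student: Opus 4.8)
The plan is to count \ao trees of size $n$ that compute $f$ via the generating-function / analytic-combinatorics machinery, exactly as in Kozik's pattern theory but now tracking the dependence on $k_n$. Let me sketch the main steps.

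First I would set up the bivariate generating function for \ao trees labelled with $2k$ literals (so $k$ variables). Writing $T(z)$ for the generating function of unlabelled binary plane trees, $T(z)=\frac{1-\sqrt{1-4z}}{2}$, the generating function for labelled trees counted by size is obtained by substituting $2k z$ for the leaf weight and doubling for the internal \a/\o choices; its dominant singularity is at $z=\rho_k:=\frac{1}{8k}$, with a square-root singularity there. The radius of convergence governs the exponential growth $A_n$, recovering $A_n = \Cat_n\,2^{2n-1}k_n^n$ up to the usual $n^{-3/2}$ polynomial factor. The key point is that the \emph{location} of the singularity scales like $1/k$, and this is what will ultimately produce the $k_n^{-(L(f)+1)}$ factor rather than $k_n^{-L(f)}$.

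Next I would invoke (a $k$-dependent version of) Kozik's pattern theory to show that, up to a negligible relative error, a typical tree computing $f$ consists of a minimal-size subtree computing $f$ — of size $L(f)$, using $L(f)$ leaves — ``grafted'' onto an otherwise free tree structure. Concretely, one expands $A_n(f)$ as a sum over ways to attach a minimal pattern for $f$ at some node, each such attachment costing a factor that is $\Theta(k_n^{-L(f)})$ relative to unconstrained trees (one loses a factor $1/k$ per leaf that is forced to take a prescribed label among $k$ choices), and one extra factor $1/k$ comes from the constraint that the ``rest'' of the tree, when combined with the pattern via an \a (or \o) node, must not essentially change the computed function — equivalently, the pattern must be attached to the root along a path of like-connectives, which in the analytic count contributes one further $1/k$. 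The error terms (trees where $f$ is computed in a non-minimal way, or where the restriction structure is more complicated) are handled by the standard pattern-theory estimates, which give a uniform $O(1/k_n)$ relative error; here I would need to check that Kozik's bounds are in fact uniform in $k$, or re-derive them with explicit $k$-dependence.

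The constant $\alpha^\G_f$ would then emerge as the product of: the number of minimal tree-structures computing $f$ (a finite combinatorial quantity depending on $f$ only), the appropriate power of $2$ for internal-node labels, and a convergent series coming from summing the ``free part'' contributions — this series converges precisely because of the singularity analysis, and its limit is a finite positive constant. Positivity of $\alpha^\G_f$ is immediate since there is at least one minimal tree computing $f$ and all contributing terms are nonnegative. The main obstacle I anticipate is making the pattern-theory estimates genuinely uniform in the growing parameter $k_n$: in the classical setting $k$ is a fixed constant sent to infinity only at the end, whereas here the size-$n$ asymptotics must already be valid with $k=k_n\to\infty$, so every $O(\cdot)$ hidden in Kozik's lemmas has to be re-examined for its dependence on $k$, and one must ensure the relevant subseries converge uniformly. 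Once that uniformity is established, the rest is a bookkeeping exercise in singularity analysis (transfer theorems à la Flajolet--Odlyzko) to extract the $n\to\infty$ asymptotics of $A_n(f)/A_n$ and read off both the exponent $L(f)+1$ and the constant $\alpha^\G_f$.
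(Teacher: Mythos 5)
Your high-level plan matches the paper's: characterize a typical tree computing $f$ as a minimal tree of $f$ expanded once, count the cost of the minimal tree's $L(f)$ constrained leaves plus one additional restriction needed to make the expansion ``absorbable,'' and conclude the exponent is $L(f)+1$. What you are missing is the single technical idea that makes the $k$-uniformity problem disappear, and that idea is not a bookkeeping refinement of Kozik's lemmas --- it is a different way of setting up the generating-function analysis.

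You propose working with the labelled generating function whose dominant singularity sits at $\rho_{k}=1/(8k)$, and you rightly flag that all of Kozik's $O(\cdot)$ estimates then have to be checked for uniformity in $k$ because the singularity moves with $n$ once $k=k_{n}$. The paper never puts itself in that position. It factors the count as $A_{n}=I_{n}\cdot\Lab_{n,k_{n}}$, where $I(z)=z+2I(z)^{2}$ generates tree-structures (shapes with $\land/\lor$ labels but unlabelled leaves) and has a \emph{fixed} singularity at $z=1/8$, while $\Lab_{n,k_{n}}=(2k_{n})^{n}$ is an exact combinatorial multiplier. Every pattern-theoretic estimate (Lemma~\ref{lem:Jakub}, Lemma~\ref{lem:restrictions}) is then a statement about coefficients of $I(z)$ and its derivatives alone, with no $k$ in sight; singularity analysis applies verbatim since nothing moves. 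The $k_{n}$-dependence enters only through exact ratios $\Lab_{n-p,k_{n}}/\Lab_{n,k_{n}}=(2k_{n})^{-p}$ (Proposition~\ref{prop:labelling2}), not through analytic error terms. So the uniformity you would ``need to check'' is never an issue --- it is bypassed by design. Without this factorization your route faces exactly the difficulty you anticipate, and the paper's own remark (``a simple but fundamental trick: one has to consider separately the tree-structure of an \ao tree and its leaf-labelling'') is there precisely because the naive $k$-dependent singularity analysis is the wrong frame. A secondary, smaller point: you attribute the extra factor $1/k$ to the expansion being ``attached along a path of like-connectives,'' but the actual source is that the expansion tree itself must contain at least one $(L,\Gamma_{f})$-restriction (a repetition or an essential variable) in order not to alter the computed function; that single forced restriction is what costs the extra $\rat_{n}=1/(2k_{n})$, via Lemma~\ref{lem:restrictions}.
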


This result has an interesting corollary concerning the Catalan-{\sc sat} problem:
recall that a Boolean expression is said \emph{satisfiable}
if it does not represent the constant function $\false$.
\begin{corollary}[Catalan-{\sc sat}]\label{thm:satis}
Let $(k_n)_{n\geq 1}$ be an increasing sequence of integers tending to infinity when $n$ tends to infinity.
Pick up uniformly at random an \ao tree of size $n$ with leaf-labels in $\{x_1, \bar x_1$, $\ldots$, $x_{k_n}, \bar x_{k_n}\}$.
This random \ao tree is equivalent to a Boolean expression that is satisfiable with probability tending to~$1$
when $n$ tends to infinity.
\end{corollary}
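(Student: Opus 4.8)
The plan is to deduce the corollary as an immediate consequence of Theorem~\ref{thm:thetaG}. First I would translate the event ``satisfiable'' into the language of the induced distribution: an \ao tree of size $n$ with leaf-labels in $\{x_1,\bar x_1,\ldots,x_{k_n},\bar x_{k_n}\}$ fails to be satisfiable if and only if the Boolean function it computes is the constant function $\false$. Hence, picking such a tree uniformly at random, the probability that it is satisfiable equals $1-\mathbb P_n(\false)$, where $\mathbb P_n$ is the generalised Catalan tree distribution of model~$\G$.

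It then suffices to show $\mathbb P_n(\false)\to 0$. I would apply Theorem~\ref{thm:thetaG} to the particular function $f=\false$. By definition the complexity of $\false$ is $L(\false)=0$, so the theorem gives
\[\mathbb P_n(\false)\sim \alpha^\G_{\false}\cdot\frac1{k_n}\]
as $n\to\infty$, with $\alpha^\G_{\false}>0$ a constant. Since $(k_n)_{n\geq 1}$ is an increasing sequence of integers tending to infinity, the right-hand side tends to~$0$, whence $\mathbb P_n(\false)\to 0$ and therefore $1-\mathbb P_n(\false)\to 1$, which is exactly the claim.

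There is essentially no obstacle here beyond correctly invoking Theorem~\ref{thm:thetaG}: the only point deserving a word of care is that the theorem is applied at the degenerate function $\false$, which is legitimate because the complexity of $\false$ has been explicitly set to~$0$ in the definition of $L$, and because the statement of Theorem~\ref{thm:thetaG} is uniform over \emph{all} Boolean functions $f$. (One could alternatively give a self-contained argument by directly bounding $A_n(\false)$ from above and comparing with $A_n^{\G}=\Cat_n\cdot 2^{2n-1}\cdot k_n^n$, but this would just reprove the $f=\false$ instance of Theorem~\ref{thm:thetaG}, so it is not needed.)
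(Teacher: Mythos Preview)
Your proof is correct and follows essentially the same route as the paper: both arguments observe that non-satisfiability is exactly the event that the tree computes $\false$, and then show $\mathbb P_n(\false)\to 0$ via the asymptotic $\mathbb P_n(\false)\sim c/k_n$. The only cosmetic difference is that the paper derives this asymptotic directly from Proposition~\ref{prop:tauto} (the tautology case) together with the $\land/\lor$ and $x/\bar x$ duality giving $\mathbb P_n(\false)=\mathbb P_n(\true)\sim\tfrac32\,\rat_n$, whereas you invoke the full Theorem~\ref{thm:thetaG} at $f=\false$; since the constant-function case of Theorem~\ref{thm:thetaG} is precisely what Proposition~\ref{prop:tauto} establishes, the two arguments are the same in substance.
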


\begin{theorem}[Model $\E$]\label{thm:thetaE}
Let $(k_n)_{n\geq 1}$ be an increasing sequence of integers tending to infinity when $n$ tends to infinity.
There exists a sequence $(M_n)_{n\geq 1}$ such that $M_n\sim_{n\rightarrow \infty} \frac{n}{\ln n}$
and such that,
for all fixed equivalence classes of Boolean functions $\langle f\rangle $,
there exists a positive constant $\alpha^\E_{\langle f\rangle}$ satisfying:
\begin{enumerate}[(i)]
\item if, for all sufficiently large $n$, $k_n \leq M_n$, then, asymptotically when $n$ tends to infinity,
\[\mathbb{P}_n \langle f \rangle \sim \alpha^\E_{\langle f\rangle}\cdot  \left( \frac{1}{k_{n+1}} \right)^{R\langle f \rangle+1} ;\]
\item if, for all sufficiently large $n$, $k_n \geq M_n$, then, asymptotically when $n$ tends to infinity,
\[\mathbb{P}_n \langle f \rangle \sim \alpha^\E_{\langle f\rangle} 	\cdot \left(\frac{\ln n}{n}\right)^{R\langle f \rangle+1}.\]
\end{enumerate}
\end{theorem}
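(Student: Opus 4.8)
The plan is to follow Kozik's pattern theory, generalised simultaneously to equivalence classes and to a number of variables $k_n$ growing with $n$, combining the analytic estimates of Section~\ref{sec:technical} with the structural results of Section~\ref{sec:patterns}; throughout I work with $\mathbb P_n\langle f\rangle = A_n\langle f\rangle/A_n$ and seek sharp asymptotics of numerator and denominator separately. For the denominator, Proposition~\ref{prop:enum} gives $A_n = \Cat_n\,2^{2n-1}\,\Sigma_n$ with $\Sigma_n = \sum_{p=1}^{k_n}{n\brace p}2^{-p}$. The summand $p\mapsto{n\brace p}2^{-p}$ is unimodal in $p$ with mode asymptotically equivalent to $n/\ln n$, and the sequence $(M_n)$ of the statement is essentially this mode (corrected by bounded terms so that the two regimes match at the transition). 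I would then show that if $k_n\le M_n$ the sum is dominated by its last terms, giving $\Sigma_n\sim c_n\,{n\brace k_n}2^{-k_n}$ for an explicit bounded factor $c_n$, whereas if $k_n\ge M_n$ one has $\Sigma_n\sim\sum_{p\ge 1}{n\brace p}2^{-p}$, the full Bell-type sum, whose asymptotics come from a saddle-point analysis of the exponential generating function $\exp\!\big((e^z-1)/2\big)$; this is the analytic heart of Section~\ref{sec:technical}.

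For the numerator I would invoke the generalised pattern theory of Section~\ref{sec:patterns}. The structural claim is that an equivalence class of trees of size $n$ computing a function of $\langle f\rangle$ is described, up to a relative error negligible in both regimes, as a fixed minimal tree of $f$ -- a \emph{pattern}, of which there are finitely many, producing the constant $\alpha^\E_{\langle f\rangle}$ -- into which ``inessential'' subtrees carrying the remaining $n - L\langle f\rangle$ leaves have been grafted, these subtrees behaving combinatorially like generic subtrees on the remaining variable-blocks. Counting the relevant choices (which $E\langle f\rangle$ blocks and which signs realise the pattern among the $p$ blocks present, and the grafting positions) should give $A_n\langle f\rangle\sim\Cat_n\,2^{2n-1}\sum_p\gamma^{\langle f\rangle}_{n,p}$, where $\gamma^{\langle f\rangle}_{n,p}$ is built from a shifted Stirling number ${n-O(1)\brace p-E\langle f\rangle}$, powers of $2$, and a polynomial-in-$p$ factor carried by the pattern. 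Relative to the denominator's summand ${n\brace p}2^{-p}$, the ``missing'' power of $p$ then equals $R\langle f\rangle+1=L\langle f\rangle-E\langle f\rangle+1$: the exponent involves $R\langle f\rangle$ rather than $L\langle f\rangle$ precisely because, in the quotient model, the identities of the essential variables are immaterial, so only the $R\langle f\rangle$ \emph{repeated} leaves of a minimal tree are genuinely constrained, and the extra unit arises, as in the classical Catalan model, from the local structure of large random trees near the dominant singularity of the relevant generating functions.

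Dividing the two estimates cancels the bulk factors $\Cat_n\,2^{2n-1}$ and leaves a ratio of truncated sums of Stirling numbers. If $k_n\le M_n$, both sums concentrate on $p$ near $k_n$; inserting standard asymptotics for ${n\brace k}$ with $k$ of order $n/\ln n$ and carefully tracking the index shifts produced by the leaves added by the pattern, the ratio collapses to $\alpha^\E_{\langle f\rangle}\,(1/k_{n+1})^{R\langle f\rangle+1}$ -- the appearance of $k_{n+1}$ rather than $k_n$ being exactly an artefact of that bookkeeping. If $k_n\ge M_n$, both sums concentrate on $p$ near the mode $\sim n/\ln n$, and the same computation yields $\alpha^\E_{\langle f\rangle}\,(\ln n/n)^{R\langle f\rangle+1}$.

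I expect the main obstacle to be the pattern-theoretic step: proving that the ``pattern plus inessential expansion'' description captures all but a vanishing fraction of the relevant equivalence classes \emph{uniformly} over all admissible sequences $(k_n)$ -- the error terms, namely classes in which the essential core is ``smeared'' or extra variable collisions occur, must be controlled simultaneously in the sub- and super-threshold regimes -- and extracting the precise exponent $R\langle f\rangle+1$ together with the value of $\alpha^\E_{\langle f\rangle}$. A close second is the truncated-sum analysis behind the denominator estimate: pinning down $(M_n)$ with $M_n\sim n/\ln n$ and checking that the two regimes glue continuously near $k_n\approx M_n$ needs sharp bounds for partial sums of Stirling numbers of the second kind.
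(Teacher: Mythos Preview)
Your strategy---pattern theory to isolate a structural core, plus analysis of the truncated Stirling sums to locate the threshold---is the paper's. Two differences in execution are worth knowing. First, the paper never estimates numerator and denominator separately: it introduces the single quantity $\rat_n := \Lab_{n-1,k_n}/\Lab_{n,k_n}$, proves \emph{once}, uniformly over both regimes, that $\mathbb P_n^{\E}\langle f\rangle \sim \lambda_{\langle f\rangle}\,\rat_n^{R\langle f\rangle+1}$, and only afterwards analyses $\rat_n$ itself via the elementary bounds $p^n/p!-(p-1)^n/(p-1)!\le{n\brace p}\le p^n/p!$. No saddle-point analysis of the full Bell-type sum is ever carried out; only \emph{ratios} $\Lab_{n-p,k_n}/\Lab_{n,k_n}$ appear, which also sidesteps the gluing issue you anticipate near $k_n\approx M_n$. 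Second, the structural claim is sharper than ``minimal pattern with inessential subtrees grafted in'': the paper shows (via a lowest-common-ancestor argument on the pattern wildcards) that a typical tree in the class is a minimal tree expanded at \emph{exactly one} node by a \emph{single} large subtree $t_e$, and that $t_e$ carries precisely one extra $(N\oplus P,\Gamma_f)$-restriction---it is either a simple tautology/contradiction (a ``T-expansion'') or an ``X-expansion'' with one designated pattern leaf labelled by an essential variable. This one-expansion picture is what makes the constant $\alpha^{\E}_{\langle f\rangle}$ computable and directly yields the exponent $R\langle f\rangle+1$ (the $R\langle f\rangle$ from the repetitions already present in any minimal tree, the $+1$ from the single extra restriction in $t_e$). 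Correspondingly, the labelling count is organised by shifting the number of \emph{leaves}, producing $\Lab_{n-R\langle f\rangle-1,k_n}$ and hence $\rat_n^{R\langle f\rangle+1}$ immediately, rather than by shifting the number of blocks by $E\langle f\rangle$ as your ${n-O(1)\brace p-E\langle f\rangle}$ suggests.
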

First note, that we could give some corollary about satisfiability for the second model $\E$ too.
However, in the classical context of SAT problems, there are no quotient formulas. So we 
omit this by-product.

Let us discuss these results in view of the classical Catalan tree distribution
studied by~\cite{CFGG04} and~\cite{kozik08}:
let us recall briefly its definition.
Let $k\geq 1$ be an integer. We denote by $T_{n,k}$ the number of trees of size $n$, 
with leaf-labels in $\{x_1, \bar x_1, \ldots, x_k, \bar x_k\}$.
Given a Boolean function $f$, we denote by $T_{n,k}(f)$ the number of such trees computing $f$.
The Catalan distribution is thus defined by, for all Boolean functions $f$,
\[\mathfrak{p}_k(f) := \lim_{n\to+\infty} \frac{T_{n,k}(f)}{T_{n,k}}.\]
The existence of the above limit is proved in~\cite{LS97} or~\cite{CFGG04}.
Kozik proved:
\begin{theorem}[Kozik~\cite{kozik08}]\label{thm:kozik}
Let $k$ be a fixed positive integer.
For all Boolean functions $f$, there exists a positive constant $c_f$ such that
\[\mathfrak p_k(f) \sim_{k\rightarrow \infty} c_f \cdot \left(\frac1{k}\right)^{L(f)+1}.\] 
\end{theorem}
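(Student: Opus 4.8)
The plan is to combine singularity analysis with the pattern-theoretic machinery recalled in Section~\ref{sec:patterns}. First I would introduce the generating function $T(z)=\sum_{n\ge1}T_{n,k}\,z^n$ of all and/or trees on $k$ variables; decomposing at the root (a leaf carrying one of $2k$ literals, or an internal node carrying one of two connectives above two ordered subtrees) gives $T=2kz+2T^2$, hence $T(z)=\tfrac14\bigl(1-\sqrt{1-16kz}\bigr)$, with a unique dominant singularity of square-root type at $\rho=1/(16k)$, $T(\rho)=\tfrac14$, and $T_{n,k}\sim\tfrac1{8\sqrt\pi}(16k)^n n^{-3/2}$ by transfer. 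For fixed $f$ (with $k\ge E(f)$, so that $f$ is representable; the statement being asymptotic in $k$ this is no loss) one shows, refining the convergence result of \cite{LS97,CFGG04} as in \cite{CFGG04}, that $T_f(z):=\sum_n T_{n,k}(f)\,z^n$ is $\Delta$-analytic with its only dominant singularity at the same $\rho$, of square-root type $T_f(z)=\alpha_f-\beta_f\sqrt{1-z/\rho}+O(1-z/\rho)$ with $\beta_f>0$; then $\mathfrak p_k(f)=4\beta_f$, so the whole task is to estimate the singular coefficient $\beta_f$ — equivalently the ratio $T_{n,k}(f)/T_{n,k}$ to leading order in $k$ — and show it is $\sim c_f\,k^{-(L(f)+1)}$.

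For the lower bound I would exhibit an explicit sub-family of $f$-computing trees. Fix a minimal tree $\mu$ of $f$ (of size $L(f)$), single out one of its leaves $y$, and replace it by the subtree $y\wedge\bigl(x_i\lor(\bar x_i\lor B)\bigr)$; since $x_i\lor(\bar x_i\lor B)$ is a tautology, the resulting tree computes $f$ for every tree $B$ and every $i\le k$. The generating function of this family is exactly $k\,z^{L(f)+2}\,T(z)$ — a frozen core of $L(f)+2$ leaves carrying a single free choice of $i$, together with $B$ ranging over all of $T(z)$ — so its $n$-th coefficient is $\sim k\,\rho^{L(f)+2}\,T_{n,k}$, giving $\mathfrak p_k(f)\ge k\,(16k)^{-(L(f)+2)}=16^{-(L(f)+2)}k^{-(L(f)+1)}$. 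In particular the dominant singularity of $T_f$ is indeed $\rho$.

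The upper bound is the substantive half, and here I would use pattern theory. The goal is a finite list $\mathcal P_f$ of patterns (tree fragments occurring at prescribed positions) such that: (a) the $f$-computing trees avoiding every pattern of $\mathcal P_f$ form a \emph{subcritical} class, with radius of convergence strictly larger than $\rho$, hence exponentially negligible; and (b) for each $P\in\mathcal P_f$ the generating function of trees containing $P$ is bounded by $\mathrm{const}_P\cdot k^{\,v_P-c_P}(16k)^n n^{-3/2}$, where $c_P$ counts the leaves frozen by $P$ and $v_P$ the free variable choices it leaves. The combinatorial heart is the inequality $c_P-v_P\ge L(f)+1$ for every $f$-forcing pattern: substituting constants for the unconstrained part of a tree realising $P$ leaves a tree still computing $f$, which forces $\ge L(f)$ frozen leaves by the very definition of complexity, while a separate argument shows that absorbing the large unconstrained part consumes at least one further frozen leaf that cannot be reused for that computation. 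Summing the finitely many type-(b) contributions and discarding (a) gives $\mathfrak p_k(f)\le(c_f+o(1))\,k^{-(L(f)+1)}$, and matching it with the lower bound — with $c_f$ the total weight of the \emph{optimal} patterns, those attaining $c_P-v_P=L(f)+1$ — completes the proof.

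The main obstacle is the upper bound, and within it two points: establishing subcriticality in (a), which is precisely where the pattern-language density estimates of Kozik (in the form generalised in Section~\ref{sec:patterns}) are indispensable; and the bookkeeping inequality $c_P-v_P\ge L(f)+1$, i.e.\ verifying that the ``complexity cost'' $L(f)$ and the ``$+1$'' absorption cost genuinely add and never share a leaf. The remaining ingredients — the generating-function set-up, the transfer theorem, and the explicit lower-bound family — are routine.
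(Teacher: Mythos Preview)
First, note that Theorem~\ref{thm:kozik} is \emph{cited} in this paper, not proved here; the paper establishes the analogous Theorems~\ref{thm:thetaG} and~\ref{thm:thetaE} by adapting Kozik's method, so the relevant comparison is with Section~\ref{sec:proba}, which follows Kozik's template.

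Your lower bound is essentially the paper's (and Kozik's): plug a simple tautology next to a leaf of a minimal tree. This is fine for the order of magnitude, though to pin down the exact constant $c_f$ one must also include the X-expansions (a large tree with a single $\land$- or $\lor$-leaf carrying an essential variable of $f$), not only the T-expansions; see the last two lemmas of Section~\ref{sec:main}.

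Your upper-bound plan, however, does not match the actual mechanism and, as stated, has a genuine gap. You propose a dichotomy: (a) $f$-computing trees avoiding a finite list $\mathcal P_f$ of local patterns form a subcritical class, and (b) trees containing some $P\in\mathcal P_f$ satisfy $c_P-v_P\ge L(f)+1$. Neither half is how the argument works. There is no finite list of local tree fragments whose avoidance makes the $f$-computing trees subcritical---your own lower bound already shows that the family of $f$-computing trees has dominant singularity exactly $\rho$, and the ``bad'' configurations (repeated or essential leaves at the right depths) can occur at arbitrarily many positions, so they do not constitute a finite pattern list in your sense. What Kozik (and the paper in Section~\ref{sec:main}) actually does is fix a \emph{single} pattern language $R=N^{(r+1)}[N\oplus P]$ with $r=L(f)$ and prove the structural statement that \emph{every} tree computing $f$ (with a leaf deep enough in $R$) has at least $L(f)+1$ $(R,\Gamma_f)$-restrictions. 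This is the Proposition in Section~\ref{sec:main}; its proof is an iterated evaluate-and-simplify argument that walks down the levels $N^{(i)}$ and uses minimality of $L(f)$ at each step---not a frozen-leaf count on a fixed local fragment. The quantitative Lemma~\ref{lem:Jakub} then converts ``at least $r$ restrictions'' directly into a factor $k^{-r}$; there is no subcriticality step at all.

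In short: your ``bookkeeping inequality $c_P-v_P\ge L(f)+1$'' is morally the right target, but it is a statement about restrictions in one global pattern language, proved by the level-by-level simplification, not about a finite collection of local patterns with a subcritical complement. Replace your (a)/(b) dichotomy by the restrictions argument and Lemma~\ref{lem:Jakub}, and the upper bound goes through.
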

As one can see Theorems~\ref{thm:thetaG} and~\ref{thm:kozik} are very similar, 
and we will see that their proofs are also very similar after having observed a simple but fundamental trick:
one has to consider separately the tree-structure of an \ao tree and its leaf-labelling. 
It was not clear before this work how to generalise Kozik's proof 
in order to tackle the Catalan-{\sc sat} problem (cf. Corollary~\ref{thm:satis}).

\vspace{\baselineskip}
Introducing equivalence classes makes things different, 
and an interesting threshold effect appears (see Theorem~\ref{thm:thetaE}).
We still have no intuition for this threshold. Obviously we will see in the proof where it comes from.

In the classical Catalan tree model, each Boolean function is studied separately 
instead of being considered among its equivalence class. 
We can translate the result obtained by Kozik in terms of equivalence classes 
by summing over all Boolean functions belonging to a given equivalence class: 
note that there are $\binom{k}{E(f)}2^{E(f)}$ functions in the equivalence class of $f$. 
Therefore, the result of Kozik is equivalent to:
for all classes $\langle f\rangle$, there exists a constant $c_{\langle f \rangle}$ such that, 
asymptotically when $k$ tends to infinity,
\[\lim_{n\to+\infty} \mathfrak p_{n,k}\langle f\rangle 
\sim c_{\langle f\rangle} \left(\frac1{k}\right)^{L(f) - E(f)+1}
= c_{\langle f\rangle} \left(\frac1{k}\right)^{R\langle f\rangle+1}.\]
The classical Catalan tree distribution can be seen as a degenerate case of our model where there
exists a fixed integer $k$ such that $k_n=k$ for all $n\geq 1$.
Recall that we assume in the present paper that $k_n$ tends to infinity when $n$ tend to infinity: 
the case $k_n=k$ is thus not a particular case of our results, but only a degenerate one.

Once again, the proof of Theorem~\ref{thm:thetaE} relies on similar ideas as Kozik's proof of Theorem~\ref{thm:kozik}.
To emphasise the similarities between the proof of our two main theorems (Theorems~\ref{thm:thetaG} and~\ref{thm:thetaE}),
we will develop their proofs together in Section~\ref{sec:proba}.


\section{Technical key point}\label{sec:technical}

As we already mentioned, the key idea of this paper is to separate the tree-structure
of an \ao tree and its leaf-labelling. 
Recall that
\[A_n^{\G} = 2^{n-1}\Cat_n \cdot (2k_n)^n  \quad \text{ and }\quad A_n^{\E} = 2^{n-1}\Cat_n \cdot \sum_{p=1}^{k_n} {n \brace p} 2^{n-p}.\]
For all $m,n\geq 1$, let us denote by
\[
\Lab_{n,m} :=\left\{
\begin{array}{lll}
(2m)^n & \text{ in model }\G ;\\
& \\
\displaystyle 2^n\cdot \sum_{p=1}^m {n\brace p} 2^{-p} & \text{ in model }\E.
\end{array}\right.\]
In both models, $\Lab_{n,m}$ corresponds to the number of ways to label the $n$ leaves with $m$ variables, thus
\[A_n = 2^{n-1}\Cat_n \cdot \Lab_{n,k_n}.\]
Finally, let us introduce the key quantity
\[\rat_n:= \frac{\Lab_{n-1,k_{n}}}{\Lab_{n,k_{n}}}.\]
Note that in the model $\G$, the quantity $\nicefrac{1}{\rat_n}=2k_{n}$ corresponds to the number of the possible labellings of the $(n+1)^\text{th}$
leaf once the other leaves are already labelled.
In the model $\E$, the leaf-labellings are not longer independent and this quantity $\nicefrac{1}{\rat_n}$ is thus less explicit.
A detailed analysis of this quantity is needed in the following.
This section is devoted to its asymptotic analysis.
\begin{proposition}\label{prop:labelling2}
Let $(k_n)_{n\geq 1}$ be an increasing sequence of integer tending to infinity when $n$ tends to infinity.
\begin{itemize}
\item[$\G$] For all integer $p$,
\[\frac{\Lab_{n-p,k_n}}{\Lab_{n,k_n}} = \frac1{(2k_n)^p}.\]
\item[$\E$] There exists a sequence $(M_n)_{n\geq 1}$ with $M_n\sim_{n\rightarrow\infty} \frac{n}{\ln n}$  and such that,
for all integer $p$,
asymptotically when $n$ tends to infinity,
\[\frac{\Lab_{n-p,k_n}}{\Lab_{n,k_n}} = 
\left\{\begin{array}{lll}
\frac{1+o(1)}{(2k_n)^p} & \hspace*{0.5cm} & \text{ if } k_n\leq M_n \text{ for large enough }n;\\
&&\\
(1+o(1))\ \left(\frac{\ln n}{2n}\right)^p && \text{ if } k_n\geq M_n \text{ for large enough }n.
\end{array}\right.\]
\end{itemize}
\end{proposition}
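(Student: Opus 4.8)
\medskip

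The plan is to treat the two models separately, as their difficulty is entirely asymmetric. For model $\G$ the statement is immediate from the definition: since $\Lab_{n,m} = (2m)^n$, the ratio $\Lab_{n-p,k_n}/\Lab_{n,k_n}$ telescopes to $(2k_n)^{-p}$ with no error term, so there is essentially nothing to prove. All the work lies in model $\E$, where $\Lab_{n,m} = 2^n \sum_{p=1}^m {n \brace p} 2^{-p}$, and the leaf-labellings are no longer independent because the Stirling numbers do not factor multiplicatively in $n$.

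\medskip

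For model $\E$, the first step I would take is to reduce the claim for general $p$ to the claim for $p=1$: if one can show $\Lab_{n-1,k_n}/\Lab_{n,k_n}$ has the stated asymptotics, then the general-$p$ statement follows by composing $p$ such ratios — being careful that along the way the index $n$ shifts, but since $k_n$ is increasing and $p$ is fixed, a comparison of $\Lab_{n-j,k_{n-j}}$ with $\Lab_{n-j,k_n}$ (a change only in the number of available variables, both with the same superscript index $n-j$) introduces only a $(1+o(1))$ factor when $k_n$ and $k_{n-j}$ are comparable, and this needs a short monotonicity/continuity argument. The second and central step is the asymptotic analysis of the truncated sum $S_m(n) := \sum_{p=1}^m {n \brace p} 2^{-p}$ as $m = k_n$ and $n$ both tend to infinity. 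Here I would recall the classical fact that, for fixed $n$, the sequence $p \mapsto {n \brace p} 2^{-p}$ (or more precisely $p\mapsto{n\brace p}$) is unimodal with a peak located around $p \sim n/\ln n$ (this is the standard "mode of the Stirling numbers" estimate, obtainable from the generating identity or from the saddle-point / Bell-number asymptotics). The threshold sequence $(M_n)$ is then precisely defined to be (close to) this mode: when $k_n \le M_n$ we are truncating \emph{before} the peak, so the sum $S_{k_n}(n)$ is dominated by its last term ${k_n \brace k_n}2^{-k_n}$ together with a geometric-type tail behind it, and the ratio of $S_{k_n}(n-1)$ to $S_{k_n}(n)$ is governed by ${k_n \brace k_n}$-type behaviour, which one checks gives $(2k_n)^{-1}(1+o(1))$ — matching model $\G$, as expected since below the mode almost all labellings use all available variables. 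When $k_n \ge M_n$ we are past the peak, the truncation is essentially harmless, $S_{k_n}(n) \sim S_\infty(n)$-type behaviour (i.e. the full sum, or the sum up to $n$), and the ratio $S_{k_n}(n-1)/S_{k_n}(n)$ is then driven by the location of the unrestricted mode, yielding the factor $\ln n/(2n)$.

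\medskip

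Concretely, the key computation is to pin down $S_m(n-1)/S_m(n)$ in each regime. I would use the recurrence ${n \brace p} = p{n-1 \brace p} + {n-1 \brace p-1}$ to write $S_m(n) = \sum_{p=1}^m \bigl(p {n-1 \brace p} + {n-1 \brace p-1}\bigr)2^{-p}$ and rearrange this into a combination of $S_m(n-1)$, a weighted version $\sum p{n-1\brace p}2^{-p}$, and a boundary term ${n-1\brace m}2^{-m-1}$. In the regime $k_n \le M_n$ (below the mode), the weighted sum is dominated by $p = m = k_n$ and the boundary term is of the same order, so $S_{k_n}(n) \approx 2k_n \cdot S_{k_n}(n-1)\cdot(1+o(1))$; in the regime $k_n \ge M_n$ the boundary term is negligible and the weighted sum $\sum p {n-1\brace p}2^{-p}$, being concentrated near $p \sim n/\ln n$, contributes a factor $\sim n/\ln n$, giving $S_{k_n}(n) \approx \tfrac{2n}{\ln n}\, S_{k_n}(n-1)\,(1+o(1))$, whence the stated ratio $(\ln n/2n)^p$.

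\medskip

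The main obstacle, I expect, is the non-uniformity exactly at the threshold: for sequences $(k_n)$ that hover near $M_n$ (oscillating above and below, or approaching it slowly) neither clean regime applies, and one must control the truncated Stirling sum near its own mode, where term-by-term domination breaks down and a genuine local-limit / saddle-point estimate for ${n \brace p}$ is required to see that the two asymptotic formulas agree to leading order there. This is why the statement is phrased with the hypotheses "$k_n \le M_n$ for large enough $n$" and "$k_n \ge M_n$ for large enough $n$" rather than an unconditional dichotomy, and why $(M_n)$ is only pinned down up to $M_n \sim n/\ln n$ rather than exactly: the flexibility in choosing $M_n$ within this asymptotic class is precisely what lets one absorb the delicate behaviour at the peak. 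Making the definition of $(M_n)$ and verifying that it can be chosen to make both one-sided statements simultaneously valid is the technical heart of the argument.
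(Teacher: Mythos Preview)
Your overall architecture is sound and your instinct to pivot on the mode $M_n\sim n/\ln n$ of the weighted Stirling sequence is exactly right; but there is one genuine gap and one point of divergence from the paper worth flagging.

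\medskip

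\textbf{The gap: your reduction to $p=1$.} You propose to write
\[
\frac{\Lab_{n-p,k_n}}{\Lab_{n,k_n}} \;=\; \prod_{j=0}^{p-1}\frac{\Lab_{n-j-1,k_n}}{\Lab_{n-j,k_n}}
\]
and then invoke the $p=1$ result for each factor, patching the index shift by comparing $\Lab_{n-j,k_{n-j}}$ with $\Lab_{n-j,k_n}$. That comparison is \emph{not} a short monotonicity argument: nothing in the hypotheses forces $k_n/k_{n-j}\to 1$, and below the mode the sum is concentrated near its upper cutoff, so changing the cutoff from $k_{n-j}$ to $k_n$ can change $\Lab_{n-j,\cdot}$ by an unbounded factor. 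The good news is that you do not need this comparison at all. Each factor $\Lab_{n-j-1,k_n}/\Lab_{n-j,k_n}$ can be analysed directly (same recurrence, same $k_n$), and its value is governed by $\min(k_n,M_{n-j})$; what you actually need is $M_{n-j}\sim M_n$ for fixed $j$, which is immediate from $M_n\sim n/\ln n$. Incidentally, in the sub-threshold regime the dominant term of $S_{k_n}(n)$ is $\genfrac\{\}{0pt}{}{n}{k_n}2^{-k_n}$, not $\genfrac\{\}{0pt}{}{k_n}{k_n}2^{-k_n}$; and the paper's analysis shows concentration in a \emph{window} of width $\delta_n\to\infty$ near the top, not domination by a single term.

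\medskip

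\textbf{Comparison with the paper.} The paper does \emph{not} reduce to $p=1$ and does \emph{not} use the Stirling recurrence. Instead it sandwiches $\genfrac\{\}{0pt}{}{n}{q}$ between $q^n/q! - (q-1)^n/(q-1)!$ and $q^n/q!$ (Bonferroni-type bounds), works with the explicit unimodal surrogate $a_q^{(n)}=q^n/(q!\,2^q)$, proves the concentration lemma for $\sum_q a_q^{(n)}$, and then handles general $p$ in one stroke: once the sum is localised to $q\in[m-\delta_n,m]$ (or $[M_n-\delta_n,M_n+\eta_n]$), replacing $n$ by $n-p$ multiplies each surviving term by $q^{-p}$, hence the whole sum by $(1+o(1))m^{-p}$ (or $M_n^{-p}$). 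Your recurrence route is a legitimate alternative and arguably more transparent for $p=1$, but it still rests on the same concentration lemma, which is where the real work lies; the paper's direct treatment of general $p$ sidesteps the index-shift issue you stumbled on.
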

In particular, taking $p=1$ gives
\begin{proposition}\label{prop:labelling}
Let $(k_n)_{n\geq 1}$ be an increasing sequence of integer tending to infinity when $n$ tends to infinity.
\begin{itemize}
\item[$\G$]
$\displaystyle \rat_n = \frac1{2k_n}$.
\item[$\E$] There exists a sequence $(M_n)_{n\geq 1}$ with $M_n\sim_{n\rightarrow\infty} \frac{n}{\ln n}$  and such that,
asymptotically when $n$ tends to infinity,
\[\rat_n = 
\left\{\begin{array}{lll}
\frac{1+o(1)}{2k_n} & \hspace*{0.5cm} & \text{ if } k_n\leq M_n \text{ for large enough }n;\\
&&\\
(1+o(1))\ \frac{\ln n}{2n} && \text{ if } k_n\geq M_n \text{ for large enough }n.
\end{array}\right.\]
\end{itemize}
\end{proposition}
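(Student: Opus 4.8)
The plan is to prove Proposition~\ref{prop:labelling2}, since Proposition~\ref{prop:labelling} is then the immediate special case $p=1$. The model $\G$ part is trivial: since $\Lab_{n,m}=(2m)^n$, the ratio $\Lab_{n-p,k_n}/\Lab_{n,k_n}=(2k_n)^{-p}$ exactly, with no asymptotics involved. So the entire content is in model $\E$, where $\Lab_{n,m}=2^n\sum_{p=1}^m {n\brace p}2^{-p}$, and we need to understand the asymptotics of $S_n(m):=\sum_{p=1}^m {n\brace p}2^{-p}$ as $n\to\infty$ with $m=k_n$ possibly also growing.

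First I would recall the behaviour of the summand $a_p:={n\brace p}2^{-p}$ as a function of $p$ for fixed large $n$. The Stirling numbers ${n\brace p}$ are unimodal in $p$, peaking around $p\approx n/\ln n$; multiplying by $2^{-p}$ shifts the peak somewhat but the qualitative picture (a single hump) persists. The key dichotomy is whether the truncation point $m=k_n$ falls before or after the location of the dominant contribution. Define $M_n$ to be (essentially) the argmax of $p\mapsto {n\brace p}2^{-p}$ — more precisely I would define $M_n$ so that the two regimes match up cleanly, and then verify $M_n\sim n/\ln n$ using the known asymptotics of Stirling numbers of the second kind (e.g. the saddle-point/maximum-term analysis, ${n\brace p}\approx p^n/p!$ in the relevant range, so the maximum of $p^n 2^{-p}/p!$ in $p$ is found by setting the logarithmic derivative to zero: $n/p - \ln 2 - \ln p \approx 0$, giving $p\ln p \approx n$, hence $p\sim n/\ln n$).

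Then the two cases:
\begin{itemize}
\item[] \textbf{Case $k_n\le M_n$:} the sum $S_n(k_n)$ is dominated by its \emph{last} term $a_{k_n}={n\brace k_n}2^{-k_n}$, because in the increasing part of the hump consecutive terms grow geometrically fast (the ratio $a_{p}/a_{p-1}$ is bounded away from $1$ there). Hence $S_n(k_n)\sim a_{k_n}$, and similarly $S_{n-p}(k_n)\sim a_{k_n}^{(n-p)}={n-p\brace k_n}2^{-k_n}$; taking the quotient, the powers of $2$ cancel and one is left with ${n-p\brace k_n}\big/{n\brace k_n}$, which I would show is asymptotically $(1/(2k_n))^p$ — wait, it should be $k_n^{-p}$ from the Stirling ratio, combined with the factor $2^{-p}$ that comes from comparing $2^{n-p}$ with $2^n$ in front (recall $\Lab_{n,m}=2^n S_n(m)$, so the ratio $\Lab_{n-p,k_n}/\Lab_{n,k_n}=2^{-p}\cdot S_{n-p}(k_n)/S_n(k_n)$). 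The Stirling-ratio estimate ${n-p\brace m}/{n\brace m}\sim m^{-p}$ for fixed $p$ and $m$ in the subcritical range is the analytic heart here and follows from ${n\brace m}$ being dominated (up to lower order) by the contribution making ${n\brace m}\approx m^n/m!$, or more carefully from a recurrence/ratio argument on the exact Stirling recurrence ${n\brace m}=m{n-1\brace m}+{n-1\brace m-1}$.
\item[] \textbf{Case $k_n\ge M_n$:} now the truncation is past the peak, so $S_n(k_n)$ is comparable to the full sum $\sum_{p\ge1}{n\brace p}2^{-p}$, i.e. it no longer "sees" $k_n$. This full sum has a clean closed-ish asymptotic: $\sum_p {n\brace p}x^p$ is a known exponential-type generating object (it is essentially the $n$-th "Bell-like" polynomial / relates to $\sum_p {n\brace p}x^p = $ the Touchard polynomial evaluated appropriately), and for $x=1/2$ its growth is governed by a saddle point. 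What matters is the ratio of consecutive values in $n$: I would show $S_{n-1}(k_n)/S_n(k_n)\sim \ln n/(2n)$ — equivalently that the full truncated-past-peak sum satisfies $\Lab_{n-1}/\Lab_{n}\sim \ln n/(2n)$ — by a saddle-point analysis of $\sum_p{n\brace p}2^{-p}$, or by the maximum-term method: the dominant term is $a_{M_n}\approx M_n^n 2^{-M_n}/M_n!$ and one tracks how $a_{M_n}$ changes when $n\mapsto n-1$, the dominant effect being the factor $M_n\sim n/\ln n$ in the numerator combined with the $2^{-1}$, giving the claimed $\ln n/(2n)$.
\end{itemize}

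I would organise the write-up as: (1) dispatch model $\G$ in one line; (2) state and prove the Stirling-number maximum-term lemma pinning down $M_n\sim n/\ln n$ and the geometric growth/decay of $a_p$ on either side of $M_n$; (3) handle the subcritical case via the last-term-dominates argument plus the Stirling ratio ${n-p\brace m}/{n\brace m}\sim m^{-p}$; (4) handle the supercritical case via a saddle-point (or Mellin/Laplace-type) estimate of the near-complete sum $\sum_p {n\brace p}2^{-p}$ and its ratio under $n\mapsto n-p$. The main obstacle, I expect, is getting the supercritical estimate $S_{n-1}(k_n)/S_n(k_n)\sim \ln n/(2n)$ with enough uniformity — in particular making the saddle-point analysis rigorous (controlling the tails of the sum, justifying that the truncation at $k_n\ge M_n$ costs only a $(1+o(1))$ factor, and extracting the precise constant $1/2$ and the logarithmic correction rather than just the order of magnitude). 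A secondary subtlety is ensuring all estimates hold uniformly enough that iterating $p$ times (for the "$p$ fixed" statement of Proposition~\ref{prop:labelling2}) does not accumulate errors; since $p$ is held fixed while $n\to\infty$ this is routine, but it should be remarked on explicitly.
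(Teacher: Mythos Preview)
Your overall plan matches the paper's: reduce to Proposition~\ref{prop:labelling2}, dispatch model~$\G$ in one line, and for model~$\E$ replace ${n\brace p}$ by $p^n/p!$ (the paper does this via the sandwich $p^n/p!-(p-1)^n/(p-1)!\le{n\brace p}\le p^n/p!$), locate the peak of $a_p^{(n)}:=p^n 2^{-p}/p!$ at $M_n\sim n/\ln n$, and then split according to whether $k_n$ falls below or above $M_n$.

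The gap is in your subcritical step. You assert that for $k_n\le M_n$ the sum $S_n(k_n)=\sum_{p\le k_n}a_p^{(n)}$ is asymptotic to its last term because ``consecutive terms grow geometrically fast (the ratio $a_p/a_{p-1}$ is bounded away from $1$)''. This is false when $k_n$ is close to $M_n$: the ratio $a_{p}^{(n)}/a_{p-1}^{(n)}$ tends to~$1$ as $p\uparrow M_n$, and in fact $S_n(M_n)$ is of order $\sqrt{M_n}\,a_{M_n}^{(n)}$, not $a_{M_n}^{(n)}$. So neither $S_n(k_n)\sim a_{k_n}^{(n)}$ nor the geometric-growth justification holds across the whole range $k_n\le M_n$. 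The paper's remedy (Lemma~\ref{lem:u_n}) is a \emph{window} argument: one shows that the mass of $S_n(k_n)$ concentrates on an interval $[k_n-\delta_n,k_n]$ for any $\delta_n$ with $\delta_n=o(k_n)$ but $k_n\sqrt{\ln k_n}/\sqrt{n}=o(\delta_n)$. The ratio then comes from the elementary sandwich
\[
(k_n-\delta_n)^p\sum_{i=k_n-\delta_n}^{k_n}a_i^{(n-p)}
\ \le\ \sum_{i=k_n-\delta_n}^{k_n}a_i^{(n)}
\ \le\ k_n^p\sum_{i=k_n-\delta_n}^{k_n}a_i^{(n-p)},
\]
using $a_i^{(n)}=i^p\,a_i^{(n-p)}$, together with $\delta_n=o(k_n)$. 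The same localisation device (now around $M_n$, with a second window width $\eta_n$ on the right) handles the supercritical case and also the boundary regime $M_{n-p}<k_n\le M_n$, which your dichotomy does not address. Your maximum-term heuristic points the right way and would give a correct proof whenever $k_n$ stays well below $M_n$, but the window concentration is the substantive missing ingredient needed to cover the full statement.
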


Remark that, with this definition of $\rat_n$, Theorems~\ref{thm:thetaG} and~\ref{thm:thetaE} can be rephrased as:
for all Boolean functions $f$, there exists constants 
\[\mathbb P_n^{\G}(f) \sim \lambda_f \cdot \rat_n^{L(f)+1},\]
and
\[\mathbb P_n^{\E}\langle f\rangle \sim \lambda_{\langle f \rangle} \cdot \rat_n^{R\langle f\rangle+1}.\]

The proof of Proposition~\ref{prop:labelling} $\G$ is obvious and 
the rest of this section is devoted to the more technical proof of Proposition~\ref{prop:labelling} $\E$.



The following proposition, which can be seen as some particular case of Bonferroni inequalities allows to exhibit bounds on $\Lab_{n,k_n}$.
\begin{proposition}[cf. for example~\cite{Sibuya88}]\label{fact:inegalite}
For all $n\geq 1$, for all $p\in\{1,\ldots,n\}$,
\[\frac{p^n}{p!} - \frac{(p-1)^n}{(p-1)!} \leq \left\{\begin{matrix}n\\p\end{matrix}\right\} \leq \frac{p^n}{p!}.\]
\end{proposition}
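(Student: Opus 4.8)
The plan is to use the combinatorial meaning of the Stirling number: $p!\,{n\brace p}$ equals the number of surjections from $\{1,\dots,n\}$ onto $\{1,\dots,p\}$, a quantity I will denote by $S(n,p)$. Both inequalities will then follow by comparing $S(n,p)$ with the total number $p^n$ of maps from $\{1,\dots,n\}$ to $\{1,\dots,p\}$; this comparison is precisely the Bonferroni-inequality mechanism alluded to in the statement.

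For the upper bound, every surjection is in particular a map, so $S(n,p)\le p^n$, and dividing by $p!$ gives ${n\brace p}\le p^n/p!$. For the lower bound, I would count the \emph{non}-surjective maps: a map $\{1,\dots,n\}\to\{1,\dots,p\}$ fails to be onto exactly when it misses some value $i\in\{1,\dots,p\}$, and for each fixed $i$ there are exactly $(p-1)^n$ maps avoiding $i$. A union bound over the $p$ choices of a missed value bounds the number of non-surjective maps by $p(p-1)^n$, whence
\[p^n-p(p-1)^n\ \le\ S(n,p)\ \le\ p^n.\]
Dividing by $p!$ and using $p(p-1)^n/p!=(p-1)^n/(p-1)!$ yields the claimed two-sided bound. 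Equivalently, one may truncate the inclusion--exclusion identity $S(n,p)=\sum_{j=0}^{p}(-1)^j\binom{p}{j}(p-j)^n$ after its first, respectively second, term, the two resulting estimates being exactly the first two Bonferroni inequalities.

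The only points to check are boundary effects: when $p=1$ one uses $0^n=0$ for $n\ge1$, so the bound reads $1\le{n\brace 1}\le1$; and when $p$ is a non-trivial fraction of $n$ the lower bound becomes negative and is therefore vacuous, so no case analysis is required. I do not expect any genuine obstacle here — the statement is essentially a one-line consequence of the surjection count combined with a union bound, which is why it is merely quoted with a reference rather than proved in detail.
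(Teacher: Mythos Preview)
Your argument is correct and is exactly the standard Bonferroni/union-bound derivation from the surjection interpretation of $p!\,{n\brace p}$. The paper does not actually prove this proposition: it is stated with a reference (``cf.\ for example~\cite{Sibuya88}'') and used as a known fact, so there is nothing further to compare.
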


In view of these inequalities and of the expression of $\Lab_{n,k_n}$,
both the following sequences naturally appear:
\begin{lemma}\label{lem:croissance}
Let $n$ be a positive integer.
\begin{enumerate}[(i)]
\item The following sequence is unimodal:
\[\left(a_p^{(n)}\right)_{p\in\{1,\ldots,n\}} = \left(\frac{p^n}{p!}2^{-p}\right)_{p\in\{1,\ldots,n\}},\]
i.e. there exists an integer $M_n$ such that $\left(a_p^{(n)}\right)_p$ is strictly increasing
on $\{1, \dots,  M_n\}$ and strictly decreasing on $\{M_n +1,\dots, n\}$.
\item Moreover, the sequence $(M_n)_n$ is increasing and asymptotically satisfies:
\[M_n \sim_{n\rightarrow\infty} \frac{n}{\ln n}.\]
\end{enumerate}
\end{lemma}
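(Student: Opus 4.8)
The plan is to study the ratio of consecutive terms $r_p := a_{p+1}^{(n)}/a_p^{(n)}$, which has a clean closed form, to show that it is strictly decreasing in $p$, and then to read off both parts of the statement from the point where $r_p$ crosses the value $1$.

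A direct computation gives
\[r_p=\frac{a_{p+1}^{(n)}}{a_p^{(n)}}=\frac{(p+1)^{n-1}}{2\,p^{\,n}}=\frac{(1+1/p)^n}{2(p+1)},\qquad\text{so}\qquad \frac{r_p}{r_{p-1}}=\Bigl(1-\tfrac1{p^2}\Bigr)^{\!n-1}\Bigl(1-\tfrac1p\Bigr).\]
For every $p\geq 2$ this quotient is a product of two factors that are strictly smaller than $1$, so $(r_p)_{p}$ is strictly decreasing. Since $r_1=2^{n-2}>1$ and $r_{n-1}=\frac{1}{2(n-1)}\bigl(1+\tfrac1{n-1}\bigr)^{n-1}\leq\frac{e}{2(n-1)}<1$ for $n\geq 3$, there is a unique threshold; setting $M_n:=1+\#\{\,p\in\{1,\dots,n-1\}:r_p>1\,\}$, the strict monotonicity of $(r_p)_p$ forces $(a_p^{(n)})_p$ to be strictly increasing on $\{1,\dots,M_n\}$ and strictly decreasing on $\{M_n+1,\dots,n\}$, which is~(i). (The finitely many small values of $n$ are checked directly.)

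For the monotonicity in~(ii), observe that $r_p>1\iff n\ln(1+1/p)>\ln(2(p+1))$: at fixed $p$ the left-hand side increases with $n$ and the right-hand side is constant, so the initial segment $\{p:r_p>1\}$ can only grow as $n$ grows, whence $(M_n)_n$ is nondecreasing. For the asymptotics, consider $\phi_n(x):=n\ln(1+1/x)-\ln(2(x+1))$, which is strictly decreasing on $x>0$ with $\phi_n(\sqrt n)=\sqrt n-\tfrac12\ln n+O(1)\to+\infty$; it therefore has a unique real zero $x_n$, with $x_n\to\infty$, and $M_n=1+\lfloor x_n\rfloor$ up to an integer edge case. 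Substituting $x=c\,n/\ln n$ and using $n\ln\!\bigl(1+\tfrac{\ln n}{cn}\bigr)=\tfrac{\ln n}{c}(1+o(1))$ together with $\ln\!\bigl(2(cn/\ln n+1)\bigr)=\ln n\,(1+o(1))$ shows that, for any fixed $\varepsilon>0$, $\phi_n\bigl((1-\varepsilon)n/\ln n\bigr)>0>\phi_n\bigl((1+\varepsilon)n/\ln n\bigr)$ for all large $n$; since $\phi_n$ is decreasing this traps $x_n$, hence $M_n$, between $(1-\varepsilon)n/\ln n$ and $(1+\varepsilon)n/\ln n$, so $M_n\sim n/\ln n$.

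The main obstacle is this last comparison: one has to expand the two logarithms carefully enough to see that the coefficient of $\ln n$ on the ``variable'' side is $1/c$ while on the ``constant'' side it is $1$, so that the sign of $\phi_n$ at $cn/\ln n$ is governed, for large $n$, by whether $c<1$ or $c>1$. The log-concavity computation and the monotonicity argument are by contrast routine once the closed form for $r_p$ is in hand.
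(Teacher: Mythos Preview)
Your proof is correct and follows essentially the same route as the paper: both study the ratio $a_{p+1}^{(n)}/a_p^{(n)}$ via the function $\phi_n(x)=n\ln(1+1/x)-\ln(2(x+1))$, establish its strict monotonicity to get unimodality, and locate the unique zero $x_n$ to obtain $M_n\sim n/\ln n$. The only cosmetic differences are that you prove log-concavity through the explicit product formula $r_p/r_{p-1}=(1-1/p^2)^{n-1}(1-1/p)$ and pin down the asymptotics by a sandwich argument at $x=(1\pm\varepsilon)n/\ln n$, whereas the paper bootstraps from $x_n\ln x_n\sim n$; both are standard and equally valid.
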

\begin{proof}
{\it (i)} 
Let us prove that the sequence $\left(a_p^{(n)}\right)_{1\leq p\leq n}$ is log-concave, 
i.e. that the sequence $\left(\frac{a_{p+1}^{(n)}}{a_p^{(n)}}\right)_{1\leq p\leq n-1}$ is decreasing.  
Let $p$ be an integer in $\{1,\dots, n-1\}$. By Definition of $a_p^{(n)}$:
\[\frac{a_{p+1}^{(n)}}{a_p^{(n)}} = \left(\frac{p+1}{p}\right)^n\cdot \frac1{2(p+1)},\]
and consequently, for all $n\geq 0$,
\[\frac{a_{p+1}^{(n)}}{a_p^{(n)}} > 1 \iff
 n\ln\left(\frac{p+1}{p}\right)-\ln(2p+2) > 0.\]
The function $\phi_n \,:\, p\mapsto n\ln\left(\frac{p+1}{p}\right)-\ln(2p+2)$ is strictly decreasing.
Note that both $\phi_n(1)$ and $\phi_n(n-1)$ are tending to infinity when $n$ tends to infinity.
Then, for all $n$ large enough,
there exists a unique $M_n$ such that $\left(a_p^{(n)}\right)_p$ is strictly increasing on $\{1,\dots,M_n\}$
and strictly decreasing on $\{M_n+1,\dots, n\}$. Let us suppose $n$ large enough for the rest of the proof.\\

{\it (ii)} Let us denote by $x_n$ the single solution of equation:
\begin{equation}\label{eq:max}
\left(\frac{x+1}{x}\right)^n\cdot \frac1{2(x+1)} = 1, \hspace*{1cm}\text{when it exists.}
\end{equation}
First remark that the sequence $(x_n)_{n\geq 1}$ is increasing.
We indeed know: $\phi_n(x_n) = 0$ and $\phi_{n+1}(x_{n+1}) = 0$,
which implies that $\phi_n(x_{n+1}) = -\ln \left(1+\frac{1}{x_{n+1}}\right) < 0$.
Therefore, since for each~$n$, the function $\phi_n$ is decreasing,
we have that $x_{n+1}\geq x_n$, for all large enough $n$.
Therefore, the sequence $(M_n)_{n\geq 1}$ is asymptotically increasing.

Since, asymptotically when $n$ tends to infinity,
\[\left(\frac{\frac{n}{\ln n}+1}{\frac{n}{\ln n}}\right)^n\cdot \frac1{2(\frac{n}{\ln n}+1)} \sim \frac{\ln n}{2},\]
we have that $n / \ln n\leq x_n$ and therefore,
$x_n$ tends to infinity. Thus,
Equation~(\ref{eq:max}) evaluated in $x_n$ is equivalent to
\begin{equation}\label{eq:M_n}
n\ln\left(1+\frac1{x_n}\right) = \ln 2 +\ln (x_n+1),
\end{equation}
which implies $x_n\ln x_n \sim n$, when $n$ tends to infinity.
We easily deduce from this asymptotic relation that
$\ln x_n \sim \ln n$ and that $x_n \sim \frac{n}{\ln n}$ when $n$ tends to infinity.
Since $M_n = \lfloor x_n \rfloor$, we conclude that $M_n \sim \nicefrac{n}{\ln n}$, when $n$ tends to infinity.
\end{proof}

We are now ready to understand the asymptotic behaviour of $\Lab_{n,k_n} / 2^n$:
\emph{roughly speaking, asymptotically, the sum $\Lab_{n,k_n} / 2^n$
does essentially only depend on the terms around $M_n$.}

\begin{lemma}\label{lem:u_n}
Let $(u_n)_{n\geq 1}$ be an increasing sequence such that $u_n\leq n$ for all integer $n\geq 1$ and $u_n$
tends to infinity when $n$ tends to infinity.
\begin{enumerate}[(i)]
\item If, for all large enough $n$, $u_n\leq M_n$, then, for all sequences $(\delta_n)_{n\geq 1}$
such that $\delta_n = o(u_n)$ and $\frac{u_n\sqrt{\ln u_n}}{\sqrt{n}} = o(\delta_n)$, we have, asymptotically when $n$ tends to infinity,
\begin{equation}\label{u_n_petit}
\frac{\Lab_{n,u_n}}{2^n} = (1+o(1))\ \sum_{p=u_n-\delta_n}^{u_n}\frac{p^n}{p!}2^{-p}.
\end{equation}
\item If, for large enough $n$, $u_n \geq M_n$, then, for all sequences $(\delta_n)_{n\geq 1}$ such that $\delta_n = o(u_n)$
 and $\frac{u_n\sqrt{\ln u_n}}{\sqrt{n}} = o(\delta_n)$, for all sequences $(\eta_n)_{n\geq 1}$
 such that $\eta_n = o(M_n)$,
 $\lim_{n\to+\infty}\frac{\eta_n^2}{M_n} = +\infty$
 and $\sqrt{M_n\ln (u_n-M_n)}=o(\eta_n)$,
 we have, asymptotically when $n$ tends to $+\infty$,
\begin{equation}\label{u_n_grand}
\frac{\Lab_{n,u_n}}{2^n}  =(1+o(1))\ \sum_{p=M_n-\delta_n}^{\min\{M_n + \eta_n, u_n\}} \frac{p^n}{p!}2^{-p}.
\end{equation}
\end{enumerate}
\end{lemma}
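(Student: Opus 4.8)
The plan is to estimate the sum $\Lab_{n,u_n}/2^n = \sum_{p=1}^{u_n} \frac{p^n}{p!}2^{-p}$ (up to the harmless correction coming from Proposition~\ref{fact:inegalite}, which only changes each $\left\{\begin{matrix}n\\p\end{matrix}\right\}$ by a multiplicative factor that is uniformly close to $1$ for the relevant range of $p$) by a Laplace-type concentration argument around the mode $M_n$ identified in Lemma~\ref{lem:croissance}. The key quantitative tool is the explicit ratio
\[
\frac{a_{p+1}^{(n)}}{a_p^{(n)}} = \Bigl(\tfrac{p+1}{p}\Bigr)^n\cdot\frac1{2(p+1)} = \exp\bigl(\phi_n(p)\bigr),
\]
where $\phi_n$ is the strictly decreasing function from the proof of Lemma~\ref{lem:croissance}. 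First I would obtain a clean second-order estimate of $\phi_n$ near its zero: expanding $n\ln(1+1/p) = n/p - n/(2p^2) + O(n/p^3)$ and using $x_n\ln x_n\sim n$, one gets that $\phi_n(p)$ behaves like a linear function of $p-M_n$ with a controlled slope of order $\ln u_n / u_n$ (in the regime $u_n\le M_n$, evaluated near $p = u_n$ where $\phi_n(u_n)$ itself is of order $\sqrt{\ln u_n/n}\cdot$(something) — this is where the hypothesis $\tfrac{u_n\sqrt{\ln u_n}}{\sqrt n} = o(\delta_n)$ enters). Summing this geometric-like decay shows that the terms $p < u_n - \delta_n$ (resp. $p < M_n - \delta_n$) contribute a vanishing fraction of the total.

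For part \emph{(i)}, since $u_n \le M_n$ the sequence $a_p^{(n)}$ is increasing on $\{1,\dots,u_n\}$, so the sum is dominated by its last terms; I would show that $\sum_{p=1}^{u_n-\delta_n} a_p^{(n)}$ is $o\bigl(\sum_{p=u_n-\delta_n}^{u_n} a_p^{(n)}\bigr)$ by bounding the head by a geometric series with ratio $\exp(\phi_n(u_n-\delta_n))$, which the hypotheses force to be bounded away from $1$ by enough to kill the $O(\delta_n)$-many terms in the tail block. For part \emph{(ii)}, with $u_n\ge M_n$ the sum now has a genuine interior maximum at $M_n$; I would split $\{1,\dots,u_n\}$ into the left tail $\{1,\dots,M_n-\delta_n\}$, the central block $\{M_n-\delta_n,\dots,\min\{M_n+\eta_n,u_n\}\}$, and the right tail $\{\min\{M_n+\eta_n,u_n\}+1,\dots,u_n\}$ (empty if $u_n\le M_n+\eta_n$). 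The left tail is handled exactly as in \emph{(i)}. For the right tail one uses that for $p > M_n$, $\phi_n(p) < 0$ and, by the curvature estimate, $\log(a_p^{(n)}/a_{M_n}^{(n)}) \lesssim -c\,(p-M_n)^2/M_n$; hence the right tail beyond $M_n+\eta_n$ is bounded by $a_{M_n}^{(n)}$ times a Gaussian tail $\exp(-c\eta_n^2/M_n)$ times the number of terms, and the conditions $\eta_n^2/M_n\to\infty$ together with $\sqrt{M_n\ln(u_n-M_n)} = o(\eta_n)$ are precisely what make this negligible compared to the central block (which has size at least one term, namely $a_{M_n}^{(n)}$ itself).

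The main obstacle I anticipate is making the curvature/slope estimates for $\phi_n$ uniform over the moving ranges of $p$ dictated by $\delta_n$ and $\eta_n$: one needs $\phi_n(p) = \phi_n'(M_n)\,(p-M_n) + \tfrac12\phi_n''(\xi)(p-M_n)^2$ with both $\phi_n'(M_n)$ and $\phi_n''$ controlled simultaneously, and the quadratic term must genuinely dominate on the right tail while the linear term drives the decay on the left. Concretely, $\phi_n'(p) = -\tfrac{n}{p(p+1)} - \tfrac1{p+1} \asymp -\tfrac{\ln p}{p}$ near $p=M_n$ (using $n\asymp M_n\ln M_n$) and $\phi_n''(p)\asymp \tfrac{n}{p^3}\asymp \tfrac{\ln p}{p^2}$, so that $\phi_n''(p)(p-M_n)^2 \asymp \tfrac{\ln M_n}{M_n^2}(p-M_n)^2$, which exceeds a constant exactly when $|p-M_n|\gtrsim M_n/\sqrt{\ln M_n}$ — consistent with the stated hypotheses on $\eta_n$. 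Once these elementary but delicate estimates are in place, both displays \eqref{u_n_petit} and \eqref{u_n_grand} follow by comparing the retained block to the discarded tails. I would also double-check at the end that replacing $\frac{p^n}{p!}$ by $\left\{\begin{matrix}n\\p\end{matrix}\right\}$ via Proposition~\ref{fact:inegalite} does not disturb the asymptotics: the correction $\frac{(p-1)^n}{(p-1)!}/\frac{p^n}{p!} = \frac{p}{(1+1/(p-1))^n} = \exp(-\phi_n(p-1))\cdot\frac{1}{2}$ is, for $p$ in the relevant block, either bounded away from $1$ (giving a genuine multiplicative factor absorbed into the $(1+o(1))$) or in any case bounded, so the inequality $\frac{p^n}{p!} - \frac{(p-1)^n}{(p-1)!} \le \left\{\begin{matrix}n\\p\end{matrix}\right\} \le \frac{p^n}{p!}$ pins down the sum up to the claimed $(1+o(1))$.
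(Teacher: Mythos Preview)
Your approach is essentially the paper's: sandwich $\Lab_{n,u_n}/2^n$ via Proposition~\ref{fact:inegalite}, invoke the unimodality of $(a_p^{(n)})_p$ from Lemma~\ref{lem:croissance}, and show that the tails outside the concentration window are negligible compared to the central block. The only execution difference is that where you propose to Taylor-expand $\phi_n$ and control the step ratios $a_{p+1}^{(n)}/a_p^{(n)}$, the paper instead computes the jump ratios $a_{u_n-\delta_n}^{(n)}/a_{u_n}^{(n)}$ and $a_{M_n+\eta_n}^{(n)}/a_{M_n}^{(n)}$ directly via Stirling's formula, combined with the crude monotonicity bound $S_{u_n-\delta_n-1}\le u_n\cdot a_{u_n-\delta_n}^{(n)}$ for the head sum --- slightly more direct, but equivalent to your route.
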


\begin{proof}[Proof of Lemma~\ref{lem:u_n} {\it (i)}]
Via Proposition~\ref{fact:inegalite}, we can bound $\frac{\Lab_{n,u_n}}{2^n} $: for all $n\geq 1$,
\begin{equation}\label{eq:infini_bounds}
\frac12 \cdot \sum_{p=1}^{u_n-1} \frac{p^n}{p!\ 2^p} + \frac{u_n^n}{u_n!\ 2^{u_n}} \leq \frac{\Lab_{n,u_n}}{2^n} 
\leq \sum_{p=1}^{u_n} \frac{p^n}{p!\ 2^p}.
\end{equation}

Let us assume that $u_n\leq M_n$ for all large enough $n$, and let us prove that the two bounds
of Equations~\eqref{eq:infini_bounds} are of the same asymptotic order when $n$ tends to infinity.

Denote, for all integer $m\geq 1$, $S_m=\sum_{p=1}^{m} a_p^{(n)}$.
Thus Equations~\eqref{eq:infini_bounds} implies
\[\frac{S_{u_n}}{2} \leq \frac{\Lab_{n,u_n}}{2^n}  \leq S_{u_n} .\]
Let us split the sum $S_{u_n}$ into two parts: the last $\delta_n$ summands, and the rest.
\[S_{u_n} = S_{u_n-\delta_n-1} + \sum_{p=u_n-\delta_n}^{u_n} a_{p}^{(n)}.\]
By assumption, $\delta_n = o(u_n)$ and we therefore can choose $n$ large enough such that $u_n>\delta_n$.
Let us prove that $S_{u_n-\delta_n-1}$ is negligible in front of $a_{u_n}$,
and thus in front of $\sum_{p=u_n-\delta_n}^{u_n} a_p^{(n)}$.
Recall that $\left(a_p^{(n)}\right)_{p\geq 1}$ is increasing on $\{1,\ldots, M_n\}$, which implies
\[S_{u_n-\delta_n-1} \leq u_n \cdot a_{u_n-\delta_n}.\]
For all large enough $n$, via Stirling formula, we deduce:
\begin{align*}
\frac{a_{u_n-\delta_n}}{a_{u_n}}
& = 2^{\delta_n}\left(\frac{u_n-\delta_n}{u_n}\right)^n \frac{u_n !}{(u_n-\delta_n)!}
= \left(\frac{2u_n}{\e}\right)^{\delta_n}\left(\frac{u_n-\delta_n}{u_n}\right)^{n-u_n+\delta_n-\frac12}(1+o(1))\\
& = \exp\left[\delta_n\ln \left(\frac{2u_n}{\e}\right) + \left(n-u_n+\delta_n-\frac12\right)\ln\left(1-\frac{\delta_n}{u_n}\right)+o(1)\right].
\end{align*}
Since $\delta_n = o(u_n)$, we get
$\ln\left(1-\frac{\delta_n}{u_n}\right) = -\frac{\delta_n}{u_n} - \frac{\delta_n^2}{2u_n^2} + o\left(\frac{\delta_n^2}{u_n^2}\right)$.
Moreover, $u_n \leq M_n$ thus,
\[
\frac{a_{u_n-\delta_n}}{a_{u_n}}
 = \exp\left[\delta_n\ln 2 +\delta_n \ln u_n - \frac{n\delta_n}{u_n} - \frac{n\delta_n^2}{2u_n^2}+ o\left(\frac{n\delta_n^2}{u_n^2}\right)\right].
\]
Therefore, by using $u_n\leq M_n$, and Equation~\eqref{eq:M_n}, we deduce $\frac{n}{M_n} \geq \ln 2 + \ln M_n$,
\begin{align*}
\frac{a_{u_n-\delta_n}}{a_{u_n}}
&  \leq \exp\left[\delta_n\ln 2 +\delta_n \ln M_n - \frac{n\delta_n}{M_n} - \frac{n\delta_n^2}{2u_n^2}+ o\left(\frac{n\delta_n^2}{u_n^2}\right)\right]\\
& \leq \exp\left[- \frac{n\delta_n^2}{2u_n^2}+ o\left(\frac{n\delta_n^2}{u_n^2}\right)\right].
\end{align*}
From the assumption $\frac{u_n\sqrt{\ln u_n}}{\sqrt{n}} = o(\delta_n)$, we deduce $\ln u_n = o\left(\frac{n\delta_n^2}{u_n^2}\right)$, thus we can conclude
\[\frac{S_{u_n-\delta_n-1}}{a_{u_n}} \leq u_n \frac{a_{u_n-\delta_n}}{a_{u_n}}
 \leq \exp\left[\ln u_n - \frac{n\delta_n^2}{2u_n^2}+ o\left(\frac{n\delta_n^2}{u_n^2}\right)\right] = o(1).\]
And consequently, we get $S_{u_n} \sim_{n\rightarrow \infty} \sum_{p=u_n-\delta_n}^{u_n} a_{p}^{(n)}$.
\end{proof}

\begin{proof}[Proof of Lemma~\ref{lem:u_n},  {\it (ii)}]
Assume that $u_n\geq M_n$ for all large enough $n$.
Let us split the sums of the lower and upper bounds of Equations~\eqref{eq:infini_bounds} into three parts:
the first from index 1 to $M_n-\delta_n-1$, the second from index $M_n-\delta_n$ to $M_n + \eta_n$,
and the third from index $M_n + \eta_n+1$ to $u_n$.
Remark that, if $u_n\leq M_n+\eta_n$, then the third part is empty and the second one is truncated:
\[S_{u_n} = S_{M_n-\delta_n-1} + \sum_{p=M_n-\delta_n}^{M_n+\eta_n} a_p^{(n)} + \sum_{p=M_n+\eta_n+1}^{u_n} a_p^{(n)}.\]
By arguments similar to those developed in the proof of assertion {\it (i)},
we can prove that $S_{M_n-\delta_n-1}$ is negligible in front of $a_{M_n}^{(n)}$,
and thus in front of $\sum_{p=M_n-\delta_n}^{M_n+\eta_n} a_p^{(n)}$.
Therefore, if $u_n\leq M_n+\eta_n$, assertion {\it (ii)} is proved.
Let us now assume that $u_n\geq M_n+\eta_n+1$: to end the proof,
we prove that $\sum_{p=M_N+\eta_n+1}^{u_n} a_p^{(n)}$ is negligible in front of $a_{M_n}^{(n)}$,
and thus in front of $\sum_{p=M_n-\delta_n}^{M_n+\eta_n} a_p^{(n)}$.\\

In view of Lemma~\ref{lem:croissance}, we have
\[\sum_{p=M_n+\eta_n+1}^{u_n} a_p^{(n)} \leq (u_n-M_n-\eta_n) \cdot a_{M_n+\eta_n}^{(n)}.\]
Via Stirling formula,
\begin{align*}
\frac{a_{M_n+\eta_n}^{(n)}}{a_{M_n}^{(n)}}
& = 2^{-\eta_n} \left(\frac{M_n+\eta_n}{M_n}\right)^n \frac{M_n!}{(M_n+\eta_n)!}
= \left(\frac{2(M_n+\eta_n)}{\e}\right)^{-\eta_n}   \left(\frac{M_n+\eta_n}{M_n}\right)^{n-M_n-\frac12} (1+o(1))\\
& = \exp\left[-\eta_n\ln \left(\frac{2(M_n+\eta_n)}{\e}\right)
+ \left(n-M_n-\frac12\right)\ln \left(1+\frac{\eta_n}{M_n}\right)+o(1)\right].
\end{align*}
Since $\ln \left(1+\frac{\eta_n}{M_n}\right) \leq \frac{\eta_n}{M_n}$, we get:
\begin{align*}
\frac{a_{M_n+\eta_n}^{(n)}}{a_{M_n}^{(n)}} 
& \leq \exp\left[-\eta_n\ln 2 + \eta_n - \eta_n\ln (M_n+\eta_n) + \frac{\eta_n}{M_n}(n-M_n-\frac12)+o(1)\right]\\
& = \exp\left[-\eta_n\ln 2  - \eta_n\ln (M_n+\eta_n) + \frac{n\eta_n}{M_n} + o(1)\right].
\end{align*}
Our assumption states $\frac{\eta_n}{M_n} = o(1)$, thus
\begin{align*}
\frac{a_{M_n+\eta_n}^{(n)}}{a_{M_n}^{(n)}} 
& \leq \exp\left[-\eta_n\ln 2 - \eta_n\ln M_n - \eta_n\ln\left(1+\frac{\eta_n}{M_n}\right) + \frac{n\eta_n}{M_n} + o(1)\right]\\
& = \exp\left[-\eta_n\ln 2 - \eta_n\ln M_n - \frac{\eta^2_n}{M_n} + \frac{n\eta_n}{M_n} + \mathcal{O}\left(\frac{\eta_n^3}{M^2_n}\right)\right]
\end{align*}
Since $M_n = \lfloor x_n \rfloor$, we have
\[n\ln\left(1+\frac1{x_n}\right) = n\left(\frac1{M_n} - \frac{1}{2M_n^2} + \mathcal{O}\left(\frac{1}{M_n^3}\right)\right),\]
therefore
\[\ln 2 + \ln (x_n+1) = \ln 2 + \ln M_n + \mathcal{O}\left(\frac1{M_n}\right).\]
Equation~\eqref{eq:M_n} implies:
\begin{align*}
\frac{n}{M_n} &= \ln 2 + \ln M_n + \frac{n}{2M_n^2} + \mathcal{O}\left(\frac{n}{M_n^3}\right) + \mathcal{O}\left(\frac1{M_n}\right) \\
& =  \ln 2 + \ln M_n + \frac{n}{2M_n^2} + \mathcal{O}\left(\frac{n}{M_n^3}\right),
\end{align*}
because $\frac1{M_n} = o(\frac{n}{M_n^3})$. Thus, we conclude
\begin{align*}
\frac{a_{M_n+\eta_n}^{(n)}}{a_{M_n}^{(n)}} 
& \leq \exp\left[ - \frac{\eta_n^2}{M_n} + \mathcal{O}\left(\frac{\eta_n^3}{M_n^2}\right) + \mathcal{O}\left(\frac{n\eta_n}{M^3_n}\right)\right]
= \exp\left[- \frac{\eta_n^2}{M_n} + o\left(\frac{\eta_n^2}{M_n}\right)\right],
\end{align*}
because, from assumption: $\sqrt{M_n\ln (u_n-M_n)}=o(\eta_n)$, we deuce $\sqrt{M_n}=o(\eta_n)$.
Finally we get
\begin{align*}
\frac{\sum_{p=M_n+\eta_n+1}^{u_n} a_p^{(n)}}{a_{M_n}^{(n)}} 
& \leq (u_n-M_n-\eta_n) \frac{a_{M_n+\eta_n}^{(n)}}{a_{M_n}^{(n)}}
\leq \exp\left[\ln (u_n-M_n) - \frac{\eta_n^2}{M_n} + o\left(\frac{\eta_n^2}{M_n}\right)\right] = o(1),
\end{align*}
since, by assumption, $\sqrt{M_n\ln (u_n-M_n)}=o(\eta_n)$.
Therefore, asymptotically when $n$ tends to infinity,
\[S_{u_n}\sim \sum_{p=M_n-\delta_n}^{M_n+\eta_n} a_p^{(n)},\]
which concludes the proof.
\end{proof}

We are now ready for the proof of Proposition~\ref{prop:labelling2}: let us decompose this proof in the
two following Lemmas~\ref{lem:kpetit} and~\ref{lem:kgrand}:
\begin{lemma}\label{lem:kpetit}
Let $(k_n)_{n\geq 1}$ be a sequence of integerssuch that $k_n \leq M_n$ for large enough $n$, 
then, for all integer $p$, asymptotically when $n$ tends to infinity,
\[\frac{\Lab_{n-p, k_n}}{\Lab_{n,k_n}} =(1+o(1))\ \left(\frac1{(2k_n)^p}\right).\]
\end{lemma}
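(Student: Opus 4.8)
The plan is to reduce the statement to a comparison of two truncated Bell-type (Stirling) sums and then to evaluate both of them \emph{over one and the same window} by invoking Lemma~\ref{lem:u_n}; using a common window is exactly what makes the $(1+o(1))$ prefactors of Lemma~\ref{lem:u_n} cancel in the quotient. Concretely, I would first unfold the definition of $\Lab$ in model~$\E$: since $\Lab_{n-p,k_n}=2^{\,n-p}\sum_{j=1}^{k_n}{n-p\brace j}2^{-j}$ and $\Lab_{n,k_n}=2^{\,n}\sum_{j=1}^{k_n}{n\brace j}2^{-j}$, one has
\[
\frac{\Lab_{n-p,k_n}}{\Lab_{n,k_n}}=\frac1{2^{p}}\cdot\frac{\Lab_{n-p,k_n}/2^{\,n-p}}{\Lab_{n,k_n}/2^{\,n}},
\]
so it suffices to prove $\Lab_{n-p,k_n}/2^{\,n-p}\sim k_n^{-p}\,\Lab_{n,k_n}/2^{\,n}$, a statement about the partial sums $\sum_{j=1}^{k_n}{n\brace j}2^{-j}$ and $\sum_{j=1}^{k_n}{n-p\brace j}2^{-j}$.

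Next I would apply Lemma~\ref{lem:u_n} to numerator and denominator simultaneously. Fix a single sequence $(\delta_n)$ with $\delta_n=o(k_n)$ and $\tfrac{k_n\sqrt{\ln k_n}}{\sqrt n}=o(\delta_n)$; since $n-p\sim n$, this $(\delta_n)$ also meets the hypotheses of Lemma~\ref{lem:u_n} relative to the index $n-p$. Because $k_n\le M_n$ for large $n$, Lemma~\ref{lem:u_n}(i) gives $\Lab_{n,k_n}/2^{\,n}=(1+o(1))\sum_{j=k_n-\delta_n}^{k_n}\tfrac{j^{n}}{j!}2^{-j}$. For the numerator, indexed by $n-p$ with truncation level $k_n$, there are two cases: either $k_n\le M_{n-p}$, and Lemma~\ref{lem:u_n}(i) again yields the analogous window sum; or $M_{n-p}<k_n\le M_n$, in which case $0<k_n-M_{n-p}\le M_n-M_{n-p}$ stays bounded (indeed $\le1$ for large $n$, since $x_n-x_{n-p}\to0$ with the notation of the proof of Lemma~\ref{lem:croissance}), and Lemma~\ref{lem:u_n}(ii) applies (with e.g.\ $\eta_n=M_{n-p}^{3/4}$, legitimate since $k_n-M_{n-p}=O(1)$); its window $\{M_{n-p}-\delta_n,\dots,\min(M_{n-p}+\eta_n,k_n)\}$ then has upper endpoint $k_n$ and contains $\{k_n-\delta_n,\dots,k_n\}$ with at most $O(1)$ extra bottom terms, which are negligible by the very estimate on $a_{u_n-\delta_n}^{(n)}/a_{u_n}^{(n)}$ carried out inside the proof of Lemma~\ref{lem:u_n}(i). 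Hence, writing $W_n:=\{k_n-\delta_n,\dots,k_n\}$, in all cases
\[
\frac{\Lab_{n,k_n}}{2^{\,n}}=(1+o(1))\sum_{j\in W_n}\frac{j^{n}}{j!}2^{-j},\qquad
\frac{\Lab_{n-p,k_n}}{2^{\,n-p}}=(1+o(1))\sum_{j\in W_n}\frac{j^{\,n-p}}{j!}2^{-j}.
\]

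Finally I would compare the two window sums term by term. Writing $\tfrac{j^{\,n-p}}{j!}2^{-j}=j^{-p}\cdot\tfrac{j^{n}}{j!}2^{-j}$ and using $\delta_n=o(k_n)$, the factor $j^{-p}$ equals $(1+o(1))k_n^{-p}$ \emph{uniformly} for $j\in W_n$ (here $p$ is fixed), so $\sum_{j\in W_n}\tfrac{j^{\,n-p}}{j!}2^{-j}=(1+o(1))k_n^{-p}\sum_{j\in W_n}\tfrac{j^{n}}{j!}2^{-j}$. Plugging this into the first display and then into the reduction above yields $\tfrac{\Lab_{n-p,k_n}}{\Lab_{n,k_n}}=(1+o(1))\,2^{-p}k_n^{-p}=(1+o(1))\,(2k_n)^{-p}$, which is the claim.

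The only genuinely delicate point is the bookkeeping in the middle step: one must ensure that a \emph{single} window $W_n$ serves for both indices $n$ and $n-p$ (so that the $(1+o(1))$ factors produced by Lemma~\ref{lem:u_n} cancel in the quotient), which requires both that $(\delta_n)$ can be chosen uniformly in the index and that, when $k_n$ sits right at the threshold $M_{n-p}<k_n\le M_n$, switching from part~(i) to part~(ii) of Lemma~\ref{lem:u_n} for the numerator only perturbs the window by the harmless $O(1)$ accounted for above. Everything else is a routine uniform estimate; in particular the actual comparison of the window sums is immediate because $j^{-p}$ is essentially constant on $W_n$.
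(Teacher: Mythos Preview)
Your proof is correct and follows essentially the same route as the paper: both split into the cases $k_n\le M_{n-p}$ and $M_{n-p}<k_n\le M_n$, invoke Lemma~\ref{lem:u_n} (part~(i) or~(ii) as appropriate) to reduce numerator and denominator to sums over a window near $k_n$, and then compare these sums using that $j^{-p}=(1+o(1))k_n^{-p}$ uniformly on the window. Your explicit observation that $x_n-x_{n-p}\to 0$ (hence $M_n-M_{n-p}\le 1$ for large $n$) is a small but welcome clarification that the paper leaves implicit when handling the borderline case.
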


\begin{proof}
{\bf (i) Let us first assume that $\bs{k_n\leq M_{n-p}}$.}
Let $(\delta_n)_{n\geq 1}$ an integer-valued sequence such that $\delta_n = o(k_n)$
and $\frac{k_n\sqrt{\ln k_n}}{\sqrt{n}} = o(\delta_n)$ when $n$ tends to infinity.
Lemma~\ref{lem:u_n} applied to $u_n = k_n$ gives, asymptotically when $n$ tends to infinity,
\[\frac{\Lab_{n,k_n}}{2^{n}}
 = (1+o(1))\ \sum_{i=k_n-\delta_n}^{k_n} a_i^{(n)}.\]
Moreover, since $k_n\leq M_{n-p}$, and since the sequence $(\delta_{n})_{n\geq 1}$ 
satisfies $\delta_{n}=o(k_n)$ and $\frac{k_n\sqrt{\ln k_n}}{\sqrt{n-p}} = o(\delta_n)$,
applying Lemma~\ref{lem:u_n} to the sequence $u_n = k_n$ gives us, asymptotically when $n$ tends to infinity,
\[\frac{\Lab_{n-p,k_n}}{2^{n-p}} = (1+o(1))\ \sum_{i=k_n-\delta_n}^{k_n} a_i^{(n-p)}.\]
Therefore,
\[\frac{\Lab_{n-p,k_n}}{\Lab_{n,k_n}}
= (2^{-p}+o(1))\ \frac{\sum_{i=k_n-\delta_n}^{k_n} a_i^{(n-p)}}
{\sum_{i=k_n-\delta_n}^{k_n} a_i^{(n)}}.
\]
We have
\begin{eqnarray*}
(k_n-\delta_n)^p\sum_{i=k_n-\delta_n}^{k_n} a_i^{(n-p)}
&\leq & \sum_{i=k_n-\delta_n}^{k_n} i^p a_i^{(n-p)}
= \sum_{i=k_n-\delta_n}^{k_n} a_i^{(n)}
= \sum_{i=k_n-\delta_n}^{k_n} i^p a_p^{(n-p)}
\leq k_n^p \sum_{p=k_n-\delta_n}^{k_n} a_p^{(n-p)},
\end{eqnarray*}
which implies
\[\frac{\Lab_{n-p, k_n}}{\Lab_{n,k_n}} 
\sim \frac1{(2k_n)^p}\quad \text{ when }n\to+\infty.\]
\vspace{\baselineskip}
{\bf (ii) Now assume that $\bs{M_{n-p} < k_n \leq M_n}$}.
Let $(\delta_n)_{n\geq 1}$ be an integer-valued sequence such that $\delta_n = o(k_n)$
and $\frac{k_n\sqrt{\ln k_n}}{\sqrt{n-p}} = o(\delta_n)$.
Let $(\eta_n)_{n\geq 1}$ be an integer-valued sequence such that $\eta_n = o(M_{n-p})$,
and $\sqrt{M_{n-p}\ln (k_n-M_{n-p})}=o(\eta_n)$.
Applying Lemma~\ref{lem:u_n} {\it (ii)} to the sequence $u_n = k_n$, we obtain
\[\frac{\Lab_{n-p,k_n}}{2^{n-p}} = (1+o(1))\ \sum_{i=M_{n-p}-\delta_n}^{\min\{M_{n-p}+\eta_n,k_n\}} a_i^{(n-p)}.\]
Moreover, since $\delta_n = o(k_n)$ and $\frac{k_n\sqrt{\ln k_n}}{\sqrt{n}} = o(\delta_n)$,
via Lemma~\ref{lem:u_n} {\it (i)},applied to the sequence $u_n=k_n$,
\[\frac{\Lab_{n,k_n}}{2^n} = (1+o(1))\ \sum_{i=k_n-\delta_n}^{k_n} a_i^{(n)}.\]
Let us remark, as above, that
\[(k_n-\delta_n)^p \sum_{i=k_n-\delta_n}^{k_n} a_i^{(n-p)} 
\leq \frac{\Lab_{n,k_n}}{2^n}
\leq k_n^p \sum_{i=k_n-\delta_n}^{k_n} a_i^{(n-p)}.\]
Moreover, since $k_n > M_{n-p}$,
using similar arguments as those developed to prove Lemma~\ref{lem:u_n} {\it (i)},
\[\sum_{i=k_n-\delta_n}^{k_n} a_i^{(n-p)} 
\sim \sum_{i=k_n-\delta_n}^{\min\{k_n,M_{n-p}+\eta_n\}} a_i^{(n-p)}
\sim \frac{\Lab_{n-p,k_n}}{2^{n-p}}.\]
Therefore, since $\delta_n = o(k_n)$, we get
\[\frac{\Lab_{n-p,k_n}}{\Lab_{n,k_n}} 
= (1+o(1))\ \frac1{(2k_n)^p},\]
which concludes the proof.
\end{proof}

\begin{lemma}\label{lem:kgrand}
Let $(k_n)_{n\geq 1}$ be a sequence of integers that tends to infinity when $n$ tends to infinity.
Let us assume that $k_{n} \geq M_n$ for large enough $n$, then, for all integer $p$,
asymptotically when $n$ tends to infinity,
\[\frac{\Lab_{n-p,k_n}}{\Lab_{n,k_n}} 
= (1+o(1))\ \left(\frac{\ln n}{2n}\right)^p.\]
\end{lemma}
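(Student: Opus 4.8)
The plan is to reduce both $\Lab_{n,k_n}$ and $\Lab_{n-p,k_n}$, via Lemma~\ref{lem:u_n}~(ii), to short sums of the quantities $a_i^{(n)}=\frac{i^{n}}{i!}\,2^{-i}$ concentrated around their mode, and then to feed in the elementary identity $a_i^{(n)}=i^{p}\,a_i^{(n-p)}$. Since ${n\brace i}=0$ for $i>n$, we have $\Lab_{n,k_n}=\Lab_{n,\min(k_n,n)}$, so we may apply Lemma~\ref{lem:u_n}~(ii) with $u_n=\min(k_n,n)$ at ``time'' $n$ and with $u_{n-p}=\min(k_n,n-p)$ at ``time'' $n-p$; this is legitimate because $\min(k_n,n)\le n$ and $\min(k_n,n)\ge M_n$ (as $k_n\ge M_n$), and similarly at $n-p$.

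First I would fix integer-valued sequences $(\delta_n)$ and $(\eta_n)$, for instance $\delta_n=\eta_n=\lfloor n^{3/4}\rfloor$, and check that they simultaneously satisfy the hypotheses of Lemma~\ref{lem:u_n}~(ii) at both times $n$ and $n-p$: indeed $\delta_n,\eta_n=o(M_n)$ and $\eta_n^{2}/M_n\to\infty$ because $M_n\sim n/\ln n$, while $\tfrac{u_n\sqrt{\ln u_n}}{\sqrt n}\le\sqrt{n\ln n}=o(\delta_n)$ and $\sqrt{M_n\ln u_n}\le\sqrt{M_n\ln n}\sim\sqrt n=o(\eta_n)$ since $u_n\le n$. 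Lemma~\ref{lem:u_n}~(ii) then gives
\[\frac{\Lab_{n,k_n}}{2^{n}}=(1+o(1))\sum_{i=M_n-\delta_n}^{\min(M_n+\eta_n,\,k_n)}a_i^{(n)},\qquad \frac{\Lab_{n-p,k_n}}{2^{n-p}}=(1+o(1))\sum_{i=M_{n-p}-\delta_n}^{\min(M_{n-p}+\eta_n,\,k_n)}a_i^{(n-p)}.\]

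The key observation, which I would establish next, is that $M_n-M_{n-p}$ stays bounded — in fact $M_n-M_{n-p}\in\{0,1\}$ for $n$ large. Using the notation of Lemma~\ref{lem:croissance}, one has $\phi_n(x_{n-1})=\phi_{n-1}(x_{n-1})+\ln(1+1/x_{n-1})=\ln(1+1/x_{n-1})$, and since $|\phi_n'(x)|$ is of order $n/x^{2}$ while $x_n\sim n/\ln n$, the mean value theorem yields $x_n-x_{n-1}\sim x_n/n\sim 1/\ln n\to 0$; hence $x_n-x_{n-p}\to0$ and $M_n=\lfloor x_n\rfloor$ differs from $M_{n-p}=\lfloor x_{n-p}\rfloor$ by at most one. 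As moreover $\delta_n,\eta_n\to\infty$, the two windows above differ only by $\mathcal{O}(1)$ boundary terms, each of which is $o\!\big(a_{M_{n-p}}^{(n-p)}\big)$ by the decay estimates underlying the proof of Lemma~\ref{lem:u_n}~(ii) (together with the fact that consecutive ratios $a_{i+1}^{(n-p)}/a_i^{(n-p)}$ are $1+o(1)$ throughout the window); therefore both sums may be replaced, up to a factor $1+o(1)$, by the sum over one common window, say $W_n=\{M_n-\delta_n,\dots,\min(M_n+\eta_n,k_n)\}$.

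It then remains to combine. Using $a_i^{(n)}=i^{p}a_i^{(n-p)}$,
\[\frac{\Lab_{n-p,k_n}}{\Lab_{n,k_n}}=(1+o(1))\,\frac{2^{n-p}}{2^{n}}\cdot\frac{\sum_{i\in W_n}a_i^{(n-p)}}{\sum_{i\in W_n}i^{p}a_i^{(n-p)}},\]
and since $\delta_n,\eta_n=o(M_n)$, every $i\in W_n$ satisfies $i=M_n(1+o(1))$ uniformly, so $i^{p}=M_n^{p}(1+o(1))$ uniformly on $W_n$ ($p$ being fixed); hence $\sum_{i\in W_n}i^{p}a_i^{(n-p)}=(M_n^{p}+o(M_n^{p}))\sum_{i\in W_n}a_i^{(n-p)}$ and
\[\frac{\Lab_{n-p,k_n}}{\Lab_{n,k_n}}=(1+o(1))\,(2M_n)^{-p}=(1+o(1))\Big(\frac{\ln n}{2n}\Big)^{p},\]
using $M_n\sim n/\ln n$. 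The main obstacle is precisely the passage to a single common window $W_n$: one must make sure both that $M_n-M_{n-p}$ remains bounded and that $a_i^{(n-p)}$ decays fast enough away from its mode, so that re-centring the window from $M_{n-p}$ to $M_n$ costs only a factor $1+o(1)$. Everything else is the same Stirling-based bookkeeping already carried out for Lemma~\ref{lem:u_n}.
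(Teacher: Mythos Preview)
Your argument is correct and follows essentially the same route as the paper's proof: apply Lemma~\ref{lem:u_n}~(ii) at times $n$ and $n-p$, use $a_i^{(n)}=i^{p}a_i^{(n-p)}$, show the two windows around $M_n$ and $M_{n-p}$ can be identified up to a $1+o(1)$ factor, and conclude via $M_n\sim n/\ln n$. Your treatment is in fact slightly more careful than the paper's in two respects: you explicitly handle the case $k_n>n$ by passing to $\min(k_n,n)$, and you justify $M_n-M_{n-p}\in\{0,1\}$ via the mean value theorem rather than simply asserting $M_n/M_{n-p}=1+o(1/M_n)$.
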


\begin{proof}
By assumption, $k_n\geq M_n$, which implies $k_n\geq M_{n-p}$.
Let $(\delta_n)_{n\geq 1}$ be a sequence of integers such that 
$\delta_n = o(M_{n-p})$ and $\frac{M_n\sqrt{\ln M_n}}{\sqrt{n}} = o(\delta_{n+p})$.
Let $(\eta_n)_{n\geq 1}$ be another sequence of integers such that
 $\eta_n = o(M_{n-p})$, and $\sqrt{M_n\ln (k_n-M_n)}=o(\eta_{n+p})$.
We thus can apply Lemma~\ref{lem:u_n} {\it (ii)} to $u_n = k_n$
and conclude that, asymptotically when $n$ tends to infinity,
\[\frac{\Lab_{n-p,k_n}}{2^{n-p}} 
= (1+o(1))\ \sum_{i=M_{n-p}-\delta_n}^{\min\{M_{n-p}+\eta_n,k_n\}} a_i^{(n-p)}.\]
Moreover, since the sequence $(\delta_{n})_{n\geq 1}$ verifies $\delta_{n} = o(M_{n-p}) = o(M_n)$
and $\frac{M_n\sqrt{\ln M_n}}{\sqrt{n}} = o(\delta_{n+p}) = o(\delta_n)$,
and since the sequence $(\eta_n)_{n\geq 1}$ verifies $\eta_n = o(M_{n-p}) = o(M_n)$,
and $\sqrt{M_n\ln (k_n-M_n)}=o(\eta_{n+p}) = o(\eta_n)$, we have,
\[\frac{\Lab_{n,k_n}}{2^n} 
= (1+o(1))\ \sum_{i=M_n-\delta_n}^{\min\{M_n+\eta_n,k_n\}} a_i^{(n)}.\]
Let us note that
\begin{align*}
(M_n-\delta_n)^p \sum_{i=M_n-\delta_n}^{\min\{M_n+\eta_n,k_n\}} & a_i^{(n-p)}
\leq \frac{\Lab_{n,k_n}}{2^n} 
\leq (M_n+\eta_n)^p \sum_{i=M_n-\delta_n}^{\min\{M_n+\eta_n,k_n\}} a_i^{(n-p)}.
\end{align*}
Since $k_n\geq M_n \geq M_{n-p}$, 
via similar arguments to those developed for the proof of Lemma~\ref{lem:u_n} {\it (ii)}, we get
\[\sum_{i=M_n-\delta_n}^{\min\{M_n+\eta_n,k_n\}} a_i^{(n-p)}
\sim \sum_{i=M_n-\delta_n}^{\min\{M_{n-p}+\eta_n,k_n\}} a_i^{(n-p)}.\]
We thus have to compare
\[S_n = \sum_{i=M_n-\delta_n}^{\min\{M_{n-p}+\eta_n,k_n\}} a_i^{(n-p)}\]
and
\[T_n = \sum_{i=M_{n-p}-\delta_n}^{\min\{M_{n-p}+\eta_n,k_n\}} a_i^{(n-p)},\]
and to prove that those two sums are equivalent when $n$ tends to infinity. 
Decompose $S_n$ as follows:
\[S_n = T_n 
+ \sum_{i= \min\{M_{n-p}+\eta_n,k_n\}}^{\min\{M_n+\eta_n,k_n\}} a_i^{(n-p)} 
- \sum_{i= M_{n-p}-\delta_n}^{M_n-\delta_n} a_i^{(n-p)}.\]
Arguments from the proof of Lemma~\ref{lem:u_n} {\it (ii)} imply that the second
summand is negligible in front of the first. 
Let us assume that the third term is non-zero, i.e. $M_n-\delta_n > M_{n-p}-\delta_n$
(note that if this term is zero then $S_n \sim T_n$ is already proved).
Via Lemma~\ref{lem:croissance}, since $\frac{M_n}{M_{n-p}} = 1 + o(\frac1{M_n})$, we have
\begin{align*}
\sum_{i= M_{n-p}-\delta_n}^{M_n-\delta_n} a_i^{(n-p)}
&\leq (M_n-\delta_n - M_{n-p} + \delta_n) a_{M_{n-p}-\delta_n}^{(n-p)}
= o(1)\ a_{M_{n-p}-\delta_n}^{(n-p)} = o\left(a_{M_{n-p}}^{(n-p)}\right),
\end{align*}
in view of Lemma~\ref{lem:u_n} {\it (i)}.
Therefore, since $a_{M_{n-p}}^{(n-p)}\leq T_n$, we have $S_n \sim T_n$ when $n$ tends to infinity, which implies, 
since $\eta_n = o(M_n)$ and $\delta_n = o(M_n)$,
\[\frac{\Lab_{n-p, k_n}}{\Lab_{n,k_n}} 
= (1+o(1))\ \frac1{(2M_n)^p} = (1+o(1))\ \left(\frac{\ln n}{2n}\right)^p.\qedhere\]
\end{proof}

Finally, this fundamental technical part allows us to use Kozik's 
key ideas in order to describe
the probability distribution induced on Boolean functions,
in our two new models.

\section{Adjustment of Kozik's pattern language theory}\label{sec:patterns}

In 2008, Kozik~\cite{kozik08} introduced a quite effective way to study Boolean trees: 
he defined a notion of pattern that permits to easily classify and count large trees according to some constraints
on their structures.
Kozik applied this pattern theory to study the classical Catalan tree distribution.
We recall the definitions of patterns, illustrate them on examples 
and then extend Kozik's paper results in order to use them in our new models.
This part will extensively use Analytic Combinatorics (generating functions, symbolic methods, singularity analysis): 
we refer the reader to Flajolet \& Sedgewick's book~\cite{FS09} for an introduction to these methods.

\begin{definition}
\begin{enumerate}[(i)]
\item A {\bf pattern} is a binary tree with internal nodes labelled by
$\land$ or $\lor$ and with external nodes labelled by~$\bullet$ or~$\boxempty$.
Leaves labelled by~$\bullet$ are called {\bf pattern leaves} and leaves labelled by $\boxempty$
are called {\bf place-holders}. A {\bf pattern language} is a set of patterns.

\item Given a pattern language $L$ and a family of trees $\M$, we denote by $L[\M]$ the family of all trees
obtained by replacing every place-holder in an element from $L$ by a tree from $\M$. 

\item We say that $L$ is {\bf unambiguous} if, and only if, for any family $\mathcal{M}$ of trees,
any tree of $L[\mathcal M]$
can be built from a unique pattern from $L$ into which trees from $\mathcal{M}$ have been plugged.
\end{enumerate}
\end{definition}
The generating function of a pattern language $L$ is $\ell(x,y) = \sum_{d, p} L(d,p)x^dy^p$,
where $L(d,p)$ is the number of elements of $L$ with $d$ pattern leaves and $p$ place-holders.

\begin{definition}\label{df:composition}
We define the {\bf composition} of two pattern languages $L[P]$ to be the pattern language
of trees which are obtained by replacing every place-holder of a tree from $L$ by a tree from $P$.

Given an integer $i$ and a pattern $L$, the pattern $L^{(i)}$ is defined by the following recursion: $L^{(1)}= L$ and $L^{(i+1)} = L^{(i)}[L]$.
\end{definition}

\begin{definition}
A pattern language $L$ is {\bf sub-critical} for a family $\M$ if the generating function $m(z)$ of $\M$
has a square-root singularity $\tau$, and if $\ell(x,y)$ is analytic in some set
$\{(x,y) : |x|\leq \tau+\varepsilon, |y|\leq m(\tau)+\varepsilon\}$ for some positive $\varepsilon$. 
\end{definition}

\begin{definition}
Let $L$ be a unambiguous pattern language, $\mathcal M$ be a family of trees and $\Gamma$ a subset of $\{x_i\}_{i\geq 1}$,
which cardinality does not depend on $n$. Given an element of $L[\M]$, 
\begin{enumerate}[(i)]
\item the number of its $L$-{\bf repetitions} is the number of its $L$-pattern leaves minus the
 number of different variables that appear in the labelling of its $L$-pattern leaves.
\item the number of its $(L,\Gamma)$-{\bf restrictions} is the number of its $L$-pattern leaves
that are labelled by variables from $\Gamma$, plus the number of its $L$-repetitions.
\end{enumerate}
\end{definition}

\begin{definition}
Let $\I$ be the family of the trees with internal nodes labelled by a
connective and leaves without labelling, i.e. the family of tree-structures.
\end{definition}
The generating function of $\I$ satisfies $I(z) = z+2I(z)^2$, that implies $I(z) = (1-\sqrt{1-8z})/4$ and
thus its dominant singularity is $\nicefrac18$. Let $I_n$ be the $n$-th coefficient of $I(z)$.

\vspace{\baselineskip}
We can, for example, define the unambiguous pattern language $N$ by induction as follows: 
$N = \bullet | N\lor N | N\land \boxempty$, meaning that a pattern from  $N$ is either a single
pattern leaf, or a tree rooted by $\lor$ which two sub-trees are patterns from $N$, 
or a tree rooted by $\land$ which left sub-tree is a pattern from $N$ and which right sub-tree 
is a place-holder. An element of $N$ is represented in Fig~\ref{fig:patternN}.
Its generating function verifies $n(x,y) = x + n(x,y)^2 + yn(x,y)$
and is equal to $n(x,y) = \frac12(1-y-\sqrt{(1-y)^2-4x})$. It is thus sub-critical for $\mathcal{I}$.

\begin{figure}
\centering
\includegraphics[width=.5\textwidth]{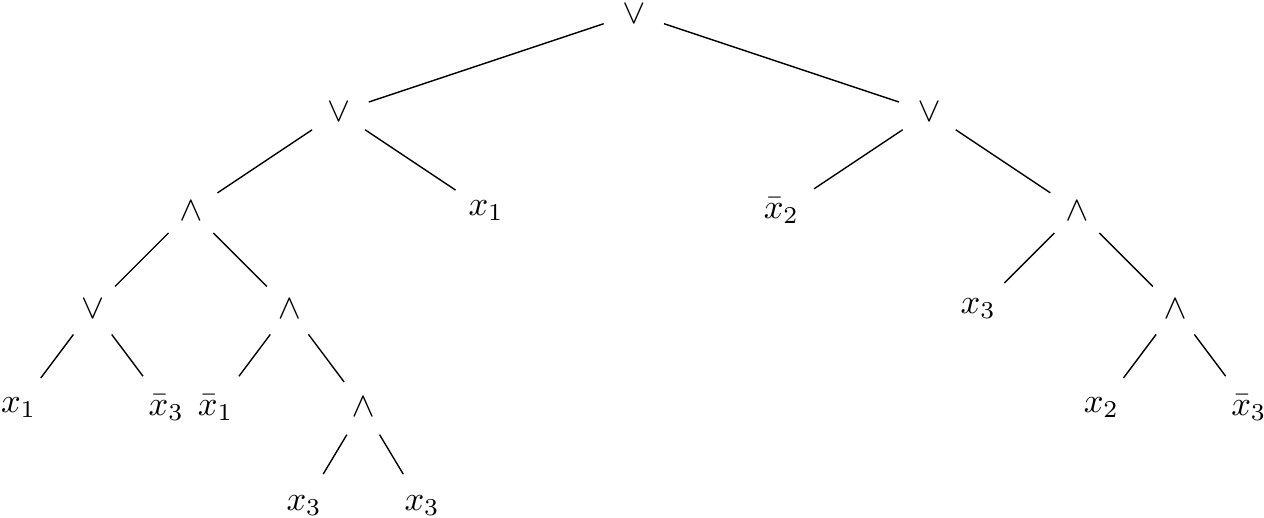}
\caption{The tree computes the function $x_1 \lor \lnot x_2$.}
\label{fig:ex_restrictions}
\end{figure}

\begin{figure}
\centering
\includegraphics[width=.4\textwidth]{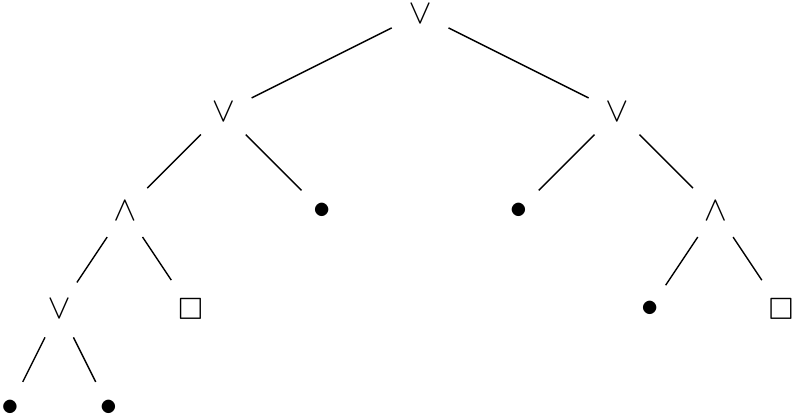}
\caption{The pattern is an element of the pattern language $N$.}
\label{fig:patternN}
\end{figure}

The tree depicted in Fig.~\ref{fig:ex_restrictions} is built from the pattern of Fig.~\ref{fig:patternN}.
It has 5 $N$-pattern leaves, 2 $N$-repetitions and 4 $(N,\{x_1,x_2\})$-restrictions.
It is also built from the pattern of Fig.~\ref{fig:patternN[N]} and has 2 $N[N]$-pattern leaves,
and 2 $(N[N],\{x_1,x_2\})$-restrictions.

\begin{figure}
\centering
\includegraphics[width=.6\textwidth]{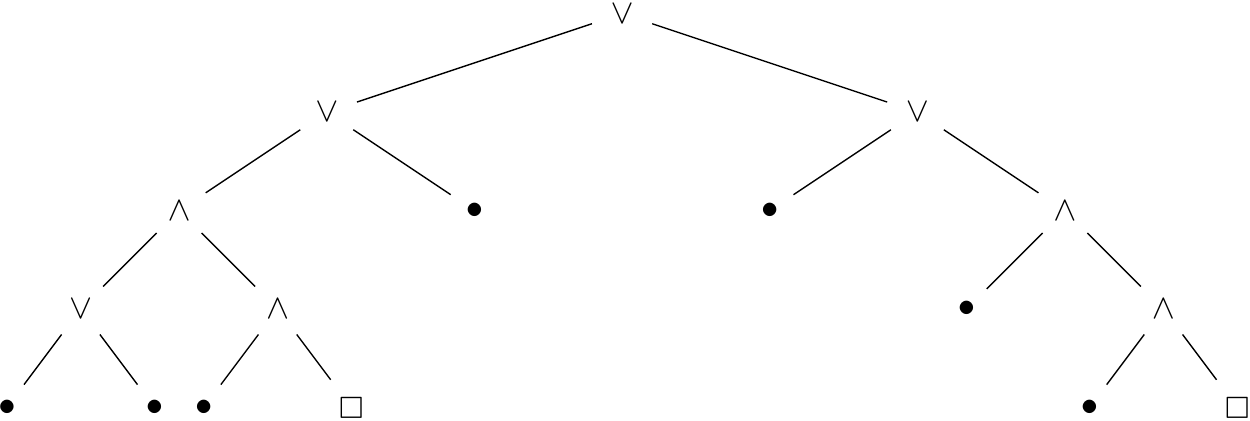}
\caption{The pattern is an element of the pattern language $N[N]$.}
\label{fig:patternN[N]}
\end{figure}

\vspace{\baselineskip}
The following key lemma is a generalization of the corresponding lemma of Kozik~\cite[Lemma 3.8]{kozik08}.
\begin{lemma}\label{lem:Jakub}
Let $L$ be an unambiguous pattern, sub-critical for the tree-structures family $\mathcal I$. 
Let $r$ be a fixed positive integer.
\begin{itemize}
\item[$\G$]
Let $A^{[r]}_{n}$ (resp. $A^{[\geq r]}_{n}$) 
be the number of labelled (with at most $k_n$ variables) trees of $L[\mathcal I]$
of size~$n$ and with $r$ $L$-repetitions (resp. at least $r$ L-repetitions).
\item [$\E$]
Let $A^{[r]}_{n}$ (resp. $A^{[\geq r]}_{n}$) 
be the number of \emph{equivalence classes} of labelled (with at most $k_n$ variables) trees of $L[\mathcal I]$
of size $n$ and with $r$ $L$-repetitions (resp. at least $r$ L-repetitions).
\end{itemize}

Then, asymptotically when $n$ tends to infinity, in both models,
\[\frac{A^{[r]}_{n}}{A_{n}} = \mathcal{O}\left(\rat_n^r\right) \hspace*{1cm}\text{ and }\hspace*{1cm}
\frac{A^{[\geq r]}_{n}}{A_{n}} = \mathcal{O}\left(\rat_n^r\right).\]
\end{lemma}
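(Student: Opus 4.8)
The plan is to reduce everything to the labelling bookkeeping already prepared in Sections~\ref{sec:technical} and~\ref{sec:patterns}, exactly mirroring Kozik's argument but separating the tree-structure from the leaf-labelling. First I would fix the unambiguous sub-critical pattern language $L$ and look at a tree of $L[\mathcal I]$ of size $n$ with exactly $r$ $L$-repetitions. The tree-structure is: a pattern from $L$ with $d$ pattern leaves and $p$ place-holders (so $d+p$ ``slots'', total size contributed by pattern leaves being $d$), into which $p$ tree-structures from $\mathcal I$ are plugged to make up the remaining $n-d$ leaves. Since $L$ is sub-critical for $\mathcal I$, the generating-function estimate of Kozik (or the standard composition lemma from~\cite{FS09}) gives that the number of tree-structures in $L[\mathcal I]$ of size $n$ with $d$ pattern leaves is $\Theta(\ell_d \cdot I_n)$ summed against a geometrically decaying weight in $d$; the key consequence is that the expected (under the uniform tree-structure measure) value of any fixed polynomial in $d$ is bounded. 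So morally $d=O(1)$ with overwhelming weight, and one may treat $d$ as bounded when counting labellings.

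Next I would count labellings. In model $\G$ a tree-structure of size $n$ with $d$ distinguished pattern leaves having exactly $r$ repetitions among those $d$ leaves is labelled by: choosing the labels of the $n-d$ non-pattern leaves freely, which contributes a factor $\Lab_{n-d,k_n}$ up to the combinatorial choice of how the pattern leaves reuse variables; the $d$ pattern leaves then carry only $d-r$ distinct variables, so the ``cost'' of labelling them, relative to labelling them freely (which would be $\mathrm{rat}_n^{-d}$ worth of choices), is reduced by a factor $\Theta(\mathrm{rat}_n^{r})$ --- because forcing $r$ coincidences among a bounded number $d$ of leaves multiplies the count by $\Theta((1/k_n)^r)=\Theta(\mathrm{rat}_n^r)$ via Proposition~\ref{prop:labelling2}~$\G$. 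Dividing by $A_n = 2^{n-1}\Cat_n\,\Lab_{n,k_n}$, the $\Lab_{n-d,k_n}/\Lab_{n,k_n}$ ratios telescope to $\mathrm{rat}_n^{d}$ (Proposition~\ref{prop:labelling2}), the $\mathrm{rat}_n^{-d}$ from the free labelling of pattern leaves cancels it, and the leftover $\Theta(\mathrm{rat}_n^r)$ survives; summing the geometrically decaying contribution over $d\geq r$ (and over the bounded number of ways to place the $r$ repetitions) keeps the bound $\mathcal O(\mathrm{rat}_n^r)$. In model $\E$ the very same scheme runs, except labellings are not independent: one replaces the elementary count $(2k_n)$ per fresh leaf by the ratio $\Lab_{n-1,k_{n}}/\Lab_{n,k_{n}}=\mathrm{rat}_n$, and uses Proposition~\ref{prop:labelling2}~$\E$ (and Lemma~\ref{lem:u_n}) to justify that adding a bounded number $d$ of fresh pattern leaves multiplies $\Lab$ by $(1+o(1))\mathrm{rat}_n^{-d}$ and forcing $r$ coincidences costs $\Theta(\mathrm{rat}_n^{r})$; equivalence classes are handled because an equivalence class is exactly a tree-structure together with a set partition of its leaves refined by a bit, so the same $\Lab_{n,k_n}$ bookkeeping applies verbatim.

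For the ``at least $r$'' statement I would simply write $A^{[\geq r]}_n = \sum_{s\geq r} A^{[s]}_n$ and note that $A^{[s]}_n/A_n = \mathcal O(\mathrm{rat}_n^s)$ with constants uniform enough (or crudely bounded by the total count of labellings with $s$ forced coincidences among the at most $n$ pattern leaves, which still decays geometrically once one controls $d$) so the geometric series in $\mathrm{rat}_n$ converges to $\mathcal O(\mathrm{rat}_n^r)$; one must be slightly careful that the number of pattern leaves $d$ is not a priori bounded, but sub-criticality gives exponential tail control on $d$ which beats the polynomially-many ways of placing repetitions.

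The main obstacle I expect is precisely this interchange of summation and the uniformity of constants: one has to combine the sub-criticality tail bound on the number $d$ of pattern leaves (a statement about tree-structures, i.e.\ coefficients of $\ell(x,I(z))$ near $z=1/8$) with the labelling ratios $\Lab_{n-d,k_n}/\Lab_{n,k_n}$, which Proposition~\ref{prop:labelling2} only controls for \emph{fixed} $p=d$, and in model $\E$ with an $o(1)$ that could in principle depend on $d$. Making the error terms uniform in $d$ over the range where the tree-structure weight is non-negligible --- equivalently, proving that $\Lab_{n-d,k_n}/\Lab_{n,k_n} \leq C^{d}\mathrm{rat}_n^{d}$ for an absolute constant $C$ and all $d \le n$, which follows from the Bonferroni bounds of Proposition~\ref{fact:inegalite} and the log-concavity of $(a_p^{(n)})_p$ from Lemma~\ref{lem:croissance} --- is the technical heart, and once it is in place the two displayed estimates fall out by the telescoping-and-cancellation described above.
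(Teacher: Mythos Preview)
Your overall strategy --- separate the tree-structure from the leaf-labelling, control the structure via sub-criticality, and then push the labelling cost onto a ratio of $\Lab$'s --- is exactly the paper's. But your implementation diverges from the paper's at one crucial point, and that divergence creates the very uniformity obstacle you flag at the end.

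You pull out \emph{all} $d$ pattern leaves and label the remaining $n-d$ leaves first, which forces you to control $\Lab_{n-d,k_n}/\Lab_{n,k_n}$ for $d$ ranging over the full (unbounded) support of the pattern-leaf count. Proposition~\ref{prop:labelling2} only gives this for fixed $d$, so you are left needing the uniform bound $\Lab_{n-d,k_n}/\Lab_{n,k_n}\le C^d\,\rat_n^d$. The paper avoids this entirely by a smarter extraction: given at least $r$ repetitions among the $d$ pattern leaves, it selects only the $r+j$ leaves ($1\le j\le r$) that realise those repetitions, partitions them into $j$ blocks via a Stirling number, and then labels the remaining $n-(r+j)$ leaves \emph{together with the $j$ block representatives} as $n-r$ effectively free leaves. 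The upshot is the single inequality
\[
\Lab(n,k_n,d,r)\ \le\ 2^r\,\Lab_{n-r,k_n}\sum_{j=1}^{r}\binom{d}{r+j}{r+j\brace j},
\]
which involves $\Lab_{n-r,k_n}$ with $r$ \emph{fixed}, so Proposition~\ref{prop:labelling2} applies directly with no uniformity needed. The residual dependence on $d$ is merely polynomial (through $\binom{d}{r+j}$), and the sum $\sum_d I_n(d)\binom{d}{r+j}$ is exactly $[z^n]\,\tfrac{z^{r+j}}{(r+j)!}\partial_1^{r+j}\ell(z,I(z))$, which by sub-criticality is $\Theta(I_n)$. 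This also disposes of ``$\ge r$'' in one stroke, whereas your plan to sum $\sum_{s\ge r}A_n^{[s]}$ again runs into the uniformity of the implied constants in $s$.

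Your route is not wrong in principle, but the uniform bound you would need is not in the paper and is not an immediate consequence of Lemma~\ref{lem:croissance} (that lemma concerns $p\mapsto a_p^{(n)}$ for fixed $n$, not $n\mapsto \Lab_{n,m}$ for fixed $m$); it would require its own argument. The paper's ``extract only the repeated leaves'' trick is the cleaner idea and is worth internalising.
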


\begin{proof}
First recall that $A_n = I_n \cdot \Lab_{n,k_n}$ in both models.

\noindent{\bf Model $\G$.}
The number of labelled trees of $L[\mathcal I]$ of size $n$ and with at least $r$ $L$-repetitions is given by:
\[A_n^{[\geq r]} = \sum_{d=r+1}^{n} I_n(d) \cdot \Lab(n,k_n,d,r),\]
where $I_n(d)$ is the number of tree-structures with $d$ $L$-pattern leaves (among the $n$ number of leaves) and
$\Lab(n,k_n,d,r)$ corresponds to the number of leaf-labellings of these trees giving at least $r$ $L$-repetitions.
The following enumeration contains some multi-counting and we therefore get an upper bound:
\[\Lab(n,k_n,d,r) \leq 2^n \cdot  \sum_{j=1}^{r}\binom{d}{r+j} {r+j \brace j} k_n(k_n-1)\cdots(k_n-j+1) k_n^{n-r-j}.\]
The factor $2^n$ corresponds to the polarity of each leaf
(whether the literal is positive or negative);
the index $j$ stands for the number of different variables involved in the $r$ repetitions;
the binomial factor corresponds to the choices of the pattern leaves that are involved in the $r$ repetitions;
the Stirling number corresponds to the partition of the $r+j$ leaves into $j$ parts;
the factor $k_n(k_n-1)\cdots (k_n-j+1)$ stand for the choice of the repeated variables, from left to right;
finally, the factor $k_n^{n-r-j}$ corresponds to the choices of the variables assigned to all remaining leaves.
We have
\[\Lab(n,k_n,d,r) \leq 2^n k_n^{n-r} \cdot  \sum_{j=1}^{r}\binom{d}{r+j} {r+j \brace j},\]
in other terms,
\[\Lab(n,k_n,d,r) \leq 2^r \Lab_{n-r,k_n} \cdot  \sum_{j=1}^{r}\binom{d}{r+j} {r+j \brace j},\]
since $\Lab_{n,m} = (2m)^n$ (in model $\G$), and
\begin{equation}\label{eq:truc}
A_{n}^{[\geq r]} 
\leq 2^r\cdot \Lab_{n-r,k_n} \sum_{j=1}^{r}  {r+j \brace j} \sum_{d=r+j}^n I_n(d) \binom{d}{r+j}.
\end{equation}
Let $\ell(x,y)$ be the generating function of the pattern $L$. 
Note that $\frac{x^p}{p!} \partial_1^p \ell$ corresponds to pointing $p$ distinct
pattern leaves (without order) in the $L$-patterns 
(where $\partial_1$ stands for the derivative according to the first coordinate).
Then, for all $p\geq 0$,
\[\frac{z^p}{p!} \partial_1^p \ell(z,I(z)) = \sum_{n=1}^{\infty}\sum_{d=p}^{\infty} I_n(d)\binom{d}{p}z^n.\]
Thus, 
\[\frac{A_{n}^{[\geq r]}}{A_{n}} \leq 
\frac{2^r \Lab_{n-r,k_n}}{\Lab_{n,k_n}}
\sum_{j=1}^r {r+j \brace j} \frac{[z^n]z^{r+j}\partial_1^{r+j}\ell (z,I(z))}{[z^n]I(z)}.\]
Since $\partial_1^{r+j}\ell(z,I(z))$ and $I(z)$
have the same dominant singularity because of the sub-criticality of the pattern $L$ according to $\I$,
the previous sum tends to a constant (because $r$ is fixed)
when $n$ tends to infinity and so
we conclude, using Propositions~\ref{prop:labelling2} and~\ref{prop:labelling}:
\[\frac{A_{n}^{[r]}}{A_{n}} 
\leq \frac{A_{n}^{[\geq r]}}{A_{n}} 
= \mathcal{O}\left(\frac{\Lab_{n-r, k_n}}{\Lab_{n, k_n}} \right)
= \mathcal{O}\left(\rat_n^r\right).\]

\vspace{\baselineskip}
\noindent{\bf Model $\E$.}
The number of equivalence classes of labelled trees of $L[\mathcal I]$ of size $n$ and with at least $r$ $L$-repetitions is given by:
\[A_n^{[\geq r]} = \sum_{d=r+1}^{n} I_n(d)\cdot \Lab(n,k_n,d,r),\]
where $I_n(d)$ is the number of tree-structures with $d$ $L$-pattern leaves and
$\Lab(n,k_n,d,r)$ corresponds to the number of leaf-labellings of these trees giving at least $r$ $L$-repetitions.
The following enumeration contains some multi-counting and we therefore get an upper bound:
\[\Lab(n,k_n,d,r) \leq 2^n \cdot  \sum_{j=1}^{r}\binom{d}{r+j} {r+j \brace j} \frac{\Lab_{n-r,k_{n}}}{2^{n-r}}.\]
The factor $2^n$ corresponds to the polarity of each leaf
(whether the literal is positive or negative);
the index $j$ stands for the number of different variables involved in the $r$ repetitions;
the binomial factor corresponds to the choices of the pattern leaves that are involved in the $r$ repetitions;
the Stirling number corresponds to the partition of $r+j$ leaves into $j$ parts;
finally, the factor $\Lab_{n-r,k_n}$ corresponds to the rest of the partition.
Therefore,
\[A_{n}^{[\geq r]} 
\leq 2^r \cdot \Lab_{n-r,k_{n}} \sum_{j=1}^{r}  {r+j \brace j} \sum_{d=r+j}^{n} I_n(d) \binom{d}{r+j}.\]
Applying the same reasoning as for model $\G$ starting from
Equation~\eqref{eq:truc} permits to conclude the proof.
\end{proof}

We have finally adapted Kozik's theory in order to apply it in the new contexts.
Since we have extended the pattern theory, we are able to use in the following the same 
key-ideas to describe the probability distributions we are interested in.

\section{Behaviour of the probability distribution}\label{sec:proba}

Once we have adapted the pattern theory to our model and proved the central Lemma~\ref{lem:Jakub},
we are ready to prove our main results, namely Theorems~\ref{thm:thetaG} and~\ref{thm:thetaE}.
A first step consists to understand the asymptotic behaviour of $\mathbb P_n^{\G}(\true)$ and $\mathbb P_n^{\E}\langle \true\rangle$.

It is natural to focus on this ``simple'' function before considering a general class $\langle f \rangle$;
and it happens to be essential for the continuation of the study.
In addition, the methods used to study tautologies (mainly pattern theory)
will also be the core of the proof for a general function (model~$\G$) or
a general equivalence class (model~$\E$).

First, let us introduce some measure in the context of Boolean expressions.
Given a family $\mathcal G$ of \ao trees (resp. equivalence classes of \ao trees),
we define its {\bf ratio} $\mu_n(\mathcal G)$ as follows: let $G_n$ be the number of elements of $\mathcal G$ of size~$n$, 
\[\mu_n(\mathcal G) := \frac{G_n}{A_n}.\]

\subsection{Tautologies}

First note that $\true$ is the unique element of its equivalence class $\langle \true\rangle$.

A {\bf tautology} is an \ao tree that represents the Boolean function $\true$.
By symmetry, the functions $\true$ and $\false$ have the same probability in both models.
Let $\mathcal T$ be the family of tautologies.
In this part, we prove that the probability of $\true$ is asymptotically equal
to the ratio of a simple subset of tautologies.

\begin{definition}[cf.~Fig.~\ref{fig:st}]
A {\bf simple tautology} is an \ao tree that contains two leaves
labelled by a variable $x$ and its negation $\bar{x}$ and
such that all internal nodes from the root to both these leaves are labelled by $\lor$-connectives.
We denote by $\mathcal S$ the family of simple tautologies.
\end{definition}

\begin{figure}
\centering
\begin{tikzpicture}[style={level distance=1cm},level/.style={sibling distance=40mm/#1}, scale=0.8]
 \node [circle] (z){$\vee$}
 child {node [circle] (a){$\vee$}
   	child{node [circle] (b) {$\vee$} 
 		child{node [circle] (c) {$\cdots$} }
 		child{node [circle] (d) {$\vee$} 
 			child{node [circle] (x) {$x$}}
 			child{node [circle] (y) {$\cdots$}}
 			}
 		}
 	child{node [circle] (e) {$\cdots$} }
 	}
 child {node [circle] (f) {$\vee$}
 	child{node [circle] (g) {$\cdots$} }
 	child{node [circle] (h) {$\vee$}
 		child{node [circle] (i) {$\bar{x}$} }
 		child{node [circle] (l) {$\cdots$} }
 		}
 	}
 ;
\end{tikzpicture}
\caption{A simple tautology.}
\label{fig:st}
\end{figure}

\begin{proposition}\label{prop:tauto}
The ratio of simple tautologies verifies
\[\mu_{n}(\mathcal S) \sim \frac32 \cdot \rat_n, \text{ when $n$ tends to infinity.}\]
Moreover, asymptotically when $n$ tends to infinity, almost all tautologies are simple tautologies, meaning that
\[\mu_n(\mathcal T) \sim \mu_n(\mathcal S), \text{ when $n$ tends to infinity.}\]
\end{proposition}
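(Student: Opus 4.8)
The plan is to prove the two assertions separately, using the generating-function / pattern-theory machinery set up in Section~\ref{sec:patterns}, together with the key labelling estimate $\rat_n = \Lab_{n-1,k_n}/\Lab_{n,k_n}$ from Proposition~\ref{prop:labelling}. Throughout, recall that $A_n = I_n \cdot \Lab_{n,k_n}$ in both models, so computing a ratio $\mu_n(\mathcal G)$ amounts to controlling a sum of the form $\sum_d (\text{tree-structures with } d \text{ marked leaves}) \cdot (\text{compatible labellings})$, divided by $I_n\cdot\Lab_{n,k_n}$.

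\textbf{Step 1: the ratio of simple tautologies.} First I would describe simple tautologies by an unambiguous pattern language, exactly in the spirit of the pattern $N$ already defined in the excerpt. A simple tautology is a spine of $\lor$-connectives from the root leading to a pair of leaves labelled $x$ and $\bar x$, with arbitrary tree-structures hanging off the spine as place-holders. So I would define a pattern language $\S$ with two distinguished pattern leaves (to receive $x$ and $\bar x$) and a controlled number of place-holders, compute its bivariate generating function $s(x,y)$ by a linear-type recursion, and check it is sub-critical for $\I$ (the generating function of tree-structures, with singularity $1/8$). The count of simple-tautology tree-structures of size $n$ with a marked complementary pair is then $[z^n]$ of something like $z^2 \partial$-derivatives of $s(z,I(z))$, which is asymptotically a constant times $I_n$ by singularity analysis. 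For the labelling: once the two spine leaves are forced to be $x$ and $\bar x$ (for some variable $x$ among the $k_n$), the remaining $n-2$ leaves are labelled freely — but one must be careful, since fixing one complementary pair uses up "two leaves worth" of labelling, contributing a factor $\Lab_{n-1,k_n}$ rather than $\Lab_{n-2,k_n}$ (because a complementary pair $x,\bar x$ on two leaves is, in terms of the labelling count, like one extra leaf carrying the already-present variable $x$ — this is precisely why a \emph{single} $\rat_n$ appears and not $\rat_n^2$). Assembling: $S_n \sim (\text{const})\cdot I_n \cdot \Lab_{n-1,k_n}$, and the constant should come out to $\tfrac32$ after accounting for inclusion–exclusion over which complementary pair is "the" one and the polarity/orientation bookkeeping. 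Dividing by $A_n = I_n\Lab_{n,k_n}$ gives $\mu_n(\S)\sim \tfrac32\rat_n$.

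\textbf{Step 2: almost all tautologies are simple.} The inequality $\mu_n(\S)\le\mu_n(\T)$ is trivial since $\S\subseteq\T$. For the reverse direction I would show $\mu_n(\T\setminus\S) = o(\rat_n)$. The standard Kozik-style argument: analyse a tautology that is \emph{not} simple by peeling off its "expansion" — repeatedly expanding at $\lor$ (resp. $\land$) roots — to exhibit either (a) a complementary pair forced along an all-$\lor$ spine, which makes it a simple tautology, contradiction; or (b) the tree must contain at least \emph{two} "coincidences" among its leaves (two repeated-variable events or restrictions), since a single non-simple coincidence cannot force the constant $\true$. More precisely, one writes $\T\setminus\S$ inside $L[\I]$ for a suitable unambiguous sub-critical pattern $L$ and argues that membership forces at least $2$ $L$-repetitions (or $1$ repetition plus one further structural constraint equivalent to a repetition in a refined pattern). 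Then Lemma~\ref{lem:Jakub} with $r=2$ gives $\mu_n(\T\setminus\S) = \mathcal O(\rat_n^2) = o(\rat_n)$, whence $\mu_n(\T)\sim\mu_n(\S)$ in both models simultaneously. The key point that makes this uniform across $\G$ and $\E$ is that Lemma~\ref{lem:Jakub} is already phrased for both models with the same $\rat_n$.

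\textbf{Main obstacle.} The delicate part is the exact combinatorial bookkeeping that yields the constant $\tfrac32$ in Step~1 — getting the unambiguous pattern for simple tautologies right (so that no simple tautology is counted twice and none is missed), and correctly identifying that forcing one complementary pair costs exactly one factor of $\Lab_{n,k_n}\big/\Lab_{n-1,k_n} = 1/\rat_n$ in the labelling count, in the model $\E$ where labellings are \emph{not} independent (so one must invoke Proposition~\ref{prop:labelling2} with $p=1$ rather than an ad hoc count). For Step~2, the main obstacle is verifying that a non-simple tautology genuinely forces two coincidences rather than one; this is exactly the combinatorial heart of Kozik's original tautology lemma, and I would adapt that argument, using the extended pattern theory of Section~\ref{sec:patterns} to make the "at least $2$ repetitions" claim precise and then close with Lemma~\ref{lem:Jakub}.
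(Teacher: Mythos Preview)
Your proposal is correct and follows essentially the same route as the paper. For Step~1 the paper uses the pattern $M = \bullet \mid M\lor M \mid \boxempty\land\boxempty$, marks two $M$-pattern leaves via the second $x$-derivative of $m(xz,I(z))$, multiplies by $\Lab_{n-1,k_n}$, and extracts the constant $\tfrac32$ by an explicit singularity computation; the overcounting is then handled by subtracting contributions from three or four marked $M$-leaves, which are $\mathcal O(\rat_n^2)$ by Proposition~\ref{prop:labelling2}. For Step~2 the paper uses the pattern $N[N]$ and argues (exactly as you anticipate) that any tautology has at least one $N[N]$-repetition, and that a tautology with \emph{exactly} one $N[N]$-repetition must be simple --- so non-simple tautologies have $\ge 2$ repetitions and Lemma~\ref{lem:Jakub} with $r=2$ finishes.
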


\begin{proof}
The proof is divided in two steps. The first one is dedicated to the
computation of the ratio $\mu_{n}(\mathcal S)$.
The second part of the proof shows that almost all tautologies are simple tautologies.

Let us consider the non-ambiguous pattern language $M = \bullet | M \lor M | \boxempty\land\; \boxempty$. 
Remark that a tree such that two $M$-pattern leaves are labelled by a variable and its negation,
is a simple tautology. The generating function of $M$ is $m(x,y) = \frac12(1-\sqrt{1-4(x+y^2)})$. It is sub-critical for $\mathcal{I}$. 

The generating function $\tilde{I}(z) = \frac12\nicefrac{\partial^2}{\partial x^2}(m(xz,I(z))_{|x=1}$
enumerates \ao trees with two marked distinct leaves linked to the root by $\mathsf{or}$-nodes.
Therefore, $DC_n = \tilde{I}_n \cdot \Lab_{n-1, k_n}$ is the number of simple tautologies where
simple tautologies realized by a unique pair of leaves are counted once,
those that are realized by two pairs of leaves are counted twice, and so on.
We have
\[\frac{DC_n}{A_n} = \frac{\tilde{I}_{n}\cdot \Lab_{n-1,k_n}}{I_{n} \cdot \Lab_{n,k_n}},\]
and using a consequence of~\cite[Theorem VII.8]{FS09} (cf. a detailed proof in~\cite{GK12}):
\[\lim_{n\to\infty}\frac{\tilde{I}_{n}}{I_{n}} = \lim_{z\to\frac18}\frac{\tilde{I}'(z)}{I'(z)}.\]
Note that
\[\tilde I(z) = \frac{z^2}{\left(1-4(z+I(z)^2)\right)^{\nicefrac32}},\]
and thus,
\[\frac{\tilde I'(z)}{I(z)} = 
\frac{2z}{\left(1-4(z+I(z)^2)\right)^{\nicefrac32}}
+ \frac{(1+ 2I'(z)I(z))}{I'(z)}\ \frac{6 z^2}{\left(1-4(z+I(z)^2)\right)^{\nicefrac52}}.\]
Note that, when $z\to \nicefrac18$, $I'(z) \to+\infty$. Moreover, $I(\nicefrac18)=\nicefrac14$. Thus,
\[\frac{\tilde I'(z)}{I(z)} 
\sim \frac{\nicefrac{3}{8^2}}{\left(1-4(\nicefrac18+\nicefrac1{16})\right)^{\nicefrac52}}
= \frac32 \quad\text{ when }z\to\frac18.\]
Thus, we get the upper bound $\nicefrac32\cdot \rat_n$ for the ratio of simple tautologies:
it remains to deal with the double-counting in order to compute a lower bound.

In $DC_n$, simple tautologies realized by a unique pair of leaves are counted once,
those that are realized by two pairs of leaves are counted twice, and so on.
Let us denote by $ST^i_n$ the number of simple tautologies counted at least $i$ times in $DC_n$: 
we have $DC_n = \sum_{i\geq 1} ST_n^{(i)}$.

Our aim is to remove from $DC_n$ the tautologies that have been over-counted. 
Therefore, we count simple tautologies realized by three $M$-pattern leaves labelled
by $\alpha/\alpha/\bar{\alpha}$ where $\alpha$ is a literal, and the tautologies
realized by four $M$-pattern leaves labelled by $\alpha/\bar{\alpha}/\beta/\bar{\beta}$ where $\alpha$ and $\beta$ are two different literals. Let us denote by
\[I_3(z) = \frac1{3!}\frac{\partial^3}{\partial x^3} m(xz,I(z))_{|x=1}\]
the generating function of tree-structures in which three $M$-pattern leaves have been pointed and
\[I_4(z) = \frac1{4!}\frac{\partial^4}{\partial x^4} m(xz,I(z))_{|x=1}\]
the generating function of tree-structures in which four $M$-pattern leaves have been pointed.
Then, let
\[DC_n^{(3)} = {3\cdot \Lab_{n-2,k_n}[z^n]I_3(z) }\quad
\text{ and }
\quad DC_n^{(4)} = {3\cdot \Lab_{n-2,k_n}[z^n]I_4(z) }.\]
The integer $DC_n^{(3)}$ (resp. $DC_n^{(4)}$) counts (possibly with multiplicity) the trees in which three (resp. four) $M$-pattern leaves have been pointed, one of them labelled by a literal and the two others by its negation (resp. two of them labelled by two literals associated to two different variables and the two others by their negations).
Remark that a tree having six $M$-pattern leaves labelled by $\alpha/\alpha/\bar{\alpha}/\beta/\beta/\bar{\beta}$ is counted twice by $DC_n^{(3)}$ and four times by $DC_n^{(4)}$.

For all integer $i$, a simple tautology counted at least $i$ times by $DC_n$ is counted at least $(i-1)$ times by $DC_n^{(3)}+DC_n^{(4)}$. Therefore,
\[ST_n \geq DC_n - (DC_n^{(3)} + DC_n^{(4)}).\]
In view of Lemma~\ref{lem:Jakub},
\[\frac{DC_n^{(3)}}{T_n} \leq c_3\cdot \frac{\Lab_{n-2,k_n}}{\Lab_{n,k_n}}\quad
\text{ and }
\quad\frac{DC_n^{(4)}}{T_n} \leq c_4\cdot \frac{\Lab_{n-2,k_n}}{\Lab_{n,k_n}},\]
where $c_3$ and $c_4$ are positive constants.
Then, asymptotically when $n$ tends to infinity, in view of Propositions~\ref{prop:labelling2} and~\ref{prop:labelling}:
$\mu_n(\mathcal F) = \mu_n(DC) + o\left(\rat_n\right) \sim \nicefrac32\cdot \rat_n$.

\vspace{\baselineskip}
Let us now turn to the second part of the proof: asymptotically, almost all tautologies are simple tautologies.
Let us consider the pattern $N = \bullet | N \lor N | N\land \boxempty$.
This pattern is unambiguous, its generating function satisfies $n(x,y) = x + n(x,y)^2 + y\cdot n(x,y)$
and is thus equal to $\frac12(1-y-\sqrt{(1-y)^2-4x})$. Consequently,
 $N$ is sub-critical for the family $\mathcal{I}$ of tree-structures.

A tautology has at least one $N[N]$-repetition. 
Otherwise, we can assign all its $N$-pattern leaves to false and,
the whole tree computes false: impossible for a tautology.

Consider a tautology $t$ with exactly one $N[N]$-repetition. this repetition must be a $x|\bar{x}$
repetition and must occur among the $N$-pattern leaves, using the same kind of argument than above.

Then, let us assume that there is an $\land$-node denoted by $\nu$ between the $N$-pattern
leaf $x$ and the root of the tree. This node $\nu$ has a left sub-tree $t_1$ and a right sub-tree $t_2$. 
Necessarily the leaf $x$ appears in $t_1$. Then, one can assign all the
$N$-pattern leaves of $t_2$ (which are $N[N]$-pattern leaves of $t$) to false, since there
is no more repetition among the $N[N]$-pattern leaves of $t$. Also assign all the $N[N]$-pattern
leaves of $t$ minus the sub-tree rooted at $\nu$ to false. Then, we can see that $t$ 
computes false: impossible. We have thus shown that $t$ is a simple tautology.

In a nutshell, tautologies with exactly one $N[N]$-repetition are simple tautologies,
a tautology must have at least one $N[N]$-repetition and, thanks to Lemma~\ref{lem:Jakub},
tautologies with more than one $N[N]$-repetitions have a ratio of order $o\left(\rat_n\right)$, 
which is negligible in front of the ratio of simple tautologies.
\end{proof}

The latter proposition gives us for free the proof for the satisfiability problem. 
In fact, both dualities between the two connectives and positive and negative literals
transform expressions computing $\true$ to expressions computing $\false$,
which implies $\IP_n^{\G}(\false) = \nicefrac{3}{2}\cdot\rat_n$.
Moreover, the only expressions that are not satisfiable compute the function $\false$ and 
$\IP_n^{\G}(\false) = \nicefrac{3}{2}\cdot\rat_n$ tends to $0$ as $n$ tends to infinity, 
which proves Corollary~\ref{thm:satis}.

\subsection{Proofs of Theorems~\ref{thm:thetaG} and~\ref{thm:thetaE}}\label{sec:main}
%

This last section is devoted to the general result, i.e. to the study of
the behaviour of $\mathbb P^{\G}_n(f)$ and $\mathbb P^{\E}_n\langle f\rangle$ for all non constant Boolean function $f$.
The main idea of this part is that, roughly speaking,
\emph{a typical tree computing a Boolean function $f$ is a minimal tree
of $f$ into which a single large tree has been plugged}.
%
%
%

In the following, $f$ (resp. $\langle f\rangle$) is fixed,we denote by $r=L(f)$ its complexity,
and by $\Gamma_f$ the set of the essential variables of $f$.
We also fix~$t$ to be an \ao tree computing $f$. 

Moreover, we will need the folowing patterns:
\[N = \bullet | N\lor N | N\land \boxempty,\]
\[P = \bullet | P\lor \boxempty | P\land P,\]
and (see Definition~\ref{df:composition} where the composition of patterns is defined)
\[R = N^{(r+1)}[N\oplus P]\quad \text{ and }\quad \bar{R} = N^{(r+1)}[(N\oplus P)^2],\]
where the language $N\oplus P$ is defined such that the $N\oplus P$-pattern
leaves of a tree are its $N$-pattern leaves plus its $P$ pattern leaves.
It is proved in~\cite{kozik08} that this pattern language is indeed
non-ambiguous and sub-critical for $\mathcal I$ if $N$ and $P$ are
non-ambiguous and sub-critical for $\mathcal I$.

We have already noticed that assigning all $N$-pattern leaves of a Boolean tree to false 
make the whole tree calculate false. 
The pattern $P$ has the dual property that: 
assigning all the $P$-patterns leaves of a tree
to true make the whole tree calculate true.
This is why these two patterns are so useful in the proof of our main result.

\begin{proposition}
A tree $t$ computing $f$ (define $r:=L(f)$) 
with at least one leaf on the $(r+2)^{\text{th}}$ level of the $R$-pattern
must have at least $r+1$ $(R,\Gamma_f)$-restrictions.
\end{proposition}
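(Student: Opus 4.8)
The plan is to argue by contradiction: assume $t$ computes $f$ (which we take to be non-constant, as in this whole section), that $t$ has a pattern leaf on the $(r+2)$-th level of $R$, and yet that $t$ carries at most $r$ $(R,\Gamma_f)$-restrictions; the goal is to deduce that $f$ is constant.

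First I would fix such a deepest pattern leaf $\lambda$ and follow the branch of $t$ from its root down to $\lambda$. By the very shape of $R=N^{(r+1)}[N\oplus P]$, this branch traverses, in order, the $N$-patterns of levels $1,\dots,r+1$ and then enters the level-$(r+2)$ copy of $N\oplus P$ that hosts $\lambda$. Since in $N=\bullet\mid N\lor N\mid N\land\boxempty$ a placeholder is always the right child of an $\land$-node, for each $i\in\{1,\dots,r+1\}$ the branch leaves the $i$-th copy of $N$ through exactly one $\land$-node $\nu_i$, passing to its right child; I write $B_i$ for the left subtree of $\nu_i$, an $N$-sub-pattern lying entirely at level $i$. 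The key structural remark is that $B_1,\dots,B_{r+1}$ sit in pairwise disjoint subtrees of $t$: $B_i$ hangs on the left of $\nu_i$, whereas $\nu_{i+1},\dots,\nu_{r+1}$ and $\lambda$ all lie strictly below $\nu_i$ on the right; consequently their sets of pattern leaves are disjoint, and it suffices to attach one distinct unit of $(R,\Gamma_f)$-restriction to each of the $r+1$ levels.

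Next, for each level $i$ I would use the basic property of $N$ recalled in the text — assigning every $N$-pattern leaf of a tree to $\false$ forces that tree to compute $\false$ — to define a candidate partial assignment $\sigma_i$: set to $\false$ every pattern leaf of $B_i$, so that $B_i$, hence $\nu_i$, evaluates to $\false$, and propagate this $\false$ up to the root by also setting to $\false$ the $N$-subtrees that hang as $\lor$-siblings along the branch above $\nu_i$. If $\sigma_i$ touches no leaf labelled by an essential variable of $f$ and no two leaves carrying the same variable with opposite polarities, then $\sigma_i$ is a consistent substitution of \emph{non-essential} variables under which $t$ computes the constant $\false$; since a function is unchanged by fixing non-essential variables, this would yield $f\equiv\false$, contradicting the non-constancy of $f$. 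Hence for every $i$ the assignment $\sigma_i$ must be obstructed, i.e.\ the set of leaves it touches contains either a leaf labelled by an essential variable or an opposite-polarity pair on a single variable — in either case an object accounted for by the $(R,\Gamma_f)$-restriction statistic.

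The last step is to turn these $r+1$ obstructions into $r+1$ \emph{distinct} units of $(R,\Gamma_f)$-restrictions, contradicting the assumed bound $r$; this is the part I expect to be the main obstacle. The obstructions lying inside the $B_i$ are automatically distinct by the disjointness above, but the $\lor$-sibling portion of $\sigma_i$ at a level $j<i$ is common to all of $\sigma_j,\sigma_{j+1},\dots$, so a priori one restriction could obstruct several $\sigma_i$ at once. I would resolve this by a careful level-by-level bookkeeping: noting that the leaves touched by $\sigma_i$ at level $j$ coincide, for all $i\ge j$, with a fixed set local to level $j$ (namely $B_j$ together with the $\lor$-sibling subtrees of level $j$ strictly between $\nu_j$ and the entry point of level $j$), one shows — using that an $N$-subtree always contains a pattern leaf, and the $N\oplus P$ structure at the deepest level — that whenever $\sigma_i$ is obstructed it is obstructed already by something in the part local to some level $\le i$; processing the levels from deepest to shallowest and committing, at each level, one fresh restriction drawn from its local part, one gets $r+1$ pairwise different restrictions located at pairwise different levels, which is the desired contradiction.
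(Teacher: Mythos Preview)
There is a genuine gap in the last step. Your assignments $\sigma_1,\dots,\sigma_{r+1}$ all contain the same set $S_1$ of $\lor$-sibling leaves at level~$1$, so a \emph{single} essential variable occurring in $S_1$ obstructs every $\sigma_i$ at once while contributing only one unit to the $(R,\Gamma_f)$-restriction count. Your ``process levels from deepest to shallowest and commit one fresh restriction per level'' sketch does not rule this out: the obstruction of $\sigma_i$ may live at level~$1$ for every~$i$, and nothing you have said forces a fresh restriction to appear at level~$i$ itself. (The auxiliary claim that ``the leaves touched by $\sigma_i$ at level~$j$ coincide for all $i\ge j$'' is also not quite right: $\sigma_j$ touches $B_j$ at level~$j$, while $\sigma_i$ for $i>j$ does not.) In short, from step~2 you extract $r+1$ obstructions, but the passage to $r+1$ \emph{distinct} restrictions is not established.

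The paper's argument avoids this difficulty by working globally rather than along a single branch. Instead of building $r+1$ assignments and trying to disjoin their obstructions, it considers the whole non-decreasing sequence $a_i=\#\{(N^{(i)},\Gamma_f)\text{-restrictions}\}$ for $i=1,\dots,r+1$. One has $a_1\ge 1$ (otherwise assigning all level-$1$ pattern leaves to $\false$ yields $f\equiv\false$), so if the total were at most~$r$ then by pigeonhole some $i\le r+1$ satisfies $a_i=a_{i-1}$: the \emph{new} pattern leaves at level~$i$ carry no essential variable and create no repetition. These leaves can then be set freely to $\false$, collapsing the level-$(i-1)$ placeholders, and after replacing the remaining inessential variables and simplifying one obtains a tree computing~$f$ with fewer than $L(f)$ leaves (in the boundary case $a_i=r$ one uses the hypothesised level-$(r+2)$ leaf and a wildcard simplification to the same effect). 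The key difference is that the pigeonhole is applied to the global restriction count over all pattern leaves at each level, which is exactly the information your branch-local argument discards.
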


\begin{proof}
Let us assume that $t$ computes $f$, 
and has at least one leaf on the $(r+2)^{\text{th}}$
level of the $R$ pattern but has less than $r$ $R$-repetitions.
Let $i$ be the smallest integer (smaller than $r+2$) such that the number
of $(N^{(i)},\Gamma_f)$-restrictions is equal to the number of $(N^{(i-1)},\Gamma_f)$-restrictions.

There must be either a repetition or an essential variable in the first level:
if there is none, then we can assign all the $N$ pattern leaves to $\false$
and this operation does not changes the represented function.
This function is then the constant function $\false$, which is impossible;
so $i\leq r+1$.

{\bf First case:} 
Let us assume that there are strictly less than $r$ $(N^{(i)},\Gamma_f)$-restrictions.
There is no repetition and no essential variable in the pattern leaves at level $i$. Therefore, we can assign them all to $\false$ and make the place-holders of the level $i-1$ compute $\false$. Let us replace those place-holders by $\false$ in the tree. Furthermore, replace by $\false$ all the non-essential remaining variables. And simplify the obtained tree to simplify all the constant leaves $\false$ and $\true$. We obtain a tree $t^{\star}$, which still computes $f$, and whose leaves are all former $N^{(i-1)}$ pattern leaves of $t$ labelled by essential variables. The tree $t^{\star}$ therefore contains strictly less than $r$ leaves, which is impossible since the complexity of $f$ is $r$.

{\bf Second Case:}
Let us assume that $t$ has exactly $r$ $(N^{(i)},\Gamma_f)$-restrictions.
Since $i\leq r+1$, there is no restriction in the place-holders of the level $r+2$. Therefore, we can replace the place-holders by wild-cards $\star$, which means that those wild-cards can be evaluated to $\true$ or $\false$ independently from each other and without changing the function computed by $t$. We can also replace the remaining leaves labelled by non-essential and non-repeated variables by such wild-cards. 

We simplify those wild-cards. Such a simplification has to delete at least one non-wild-card leaf. If we deleted a non-repeated essential variable, then the tree $t^{\star}$ does not depend on this essential variable and computes $f$: this is impossible. Thus, we deleted a repetition: $t^{\star}$ has strictly less than $R(f)$ repetitions and computes $f$. It is impossible.
\hfill\qed
\end{proof}

Remark that in Lemma~\ref{lem:Jakub}, we only count repetitions and not restrictions
as it was done in the original lemma by Kozik. Though, we will need to consider essential variables
and the following lemma permits to handle them. An {\bf expansion} of a tree $t$ is a tree obtained
by replacing a sub-tree $s$ of $t$ by $s\diamond t_e$ (or $t_e\diamond s$) where $\diamond\in\{\land,\lor\}$.

\begin{lemma}
\label{lem:restrictions}
Let $L$ be an unambiguous pattern, sub-critical for $\mc{I}$. 
Let $f$ be a fixed Boolean function, $\Gamma_f$ the set of its essential variables, 
and $\mc{M}_f$ the set of minimal trees computing $f$. 
Let $\mc{E}$ be the family of trees obtained by expanding once a tree of $\mc{M}_f$ 
by trees having exactly $p$ $(L,\Gamma_f)$-restrictions. Then, there exists a constant $\alpha^\G >0$ (resp. $\alpha^\E >0$) such that
\[\mu_{n}(\mc{E}) \sim \alpha^\G \cdot \rat_n^{L(f)+p} \text{ in model }\G,\]
resp.
\[\mu_{n}(\mc{E}) \sim \alpha^\E \cdot  \rat_n^{R\langle f\rangle +p} \text{ in model }\E.\]
\end{lemma}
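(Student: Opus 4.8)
\textbf{Proof plan for Lemma~\ref{lem:restrictions}.}

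The plan is to express $\mu_n(\mc E)$ as a sum over the minimal trees of $f$ and the possible expansion positions, then recognise that each summand is essentially a counting of a pattern-language family plugged with a large tree, so that the singularity-analysis machinery of Section~\ref{sec:patterns} (in particular Lemma~\ref{lem:Jakub} and Propositions~\ref{prop:labelling2}--\ref{prop:labelling}) applies. Concretely: an element of $\mc E$ is obtained by taking a minimal tree $m\in\mc M_f$ (there are finitely many, all of size $r=L(f)$), choosing one of its $O(1)$ sub-trees $s$, choosing a side $\diamond\in\{\land,\lor\}$, and plugging in a tree $t_e$ which lies in $L[\mc I]$ and has exactly $p$ $(L,\Gamma_f)$-restrictions. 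So I would first set up the bivariate bookkeeping: $t_e$ has $p$ restrictions, i.e. $p$ $L$-pattern leaves that are either labelled by an essential variable from $\Gamma_f$ (a fixed finite set) or are involved in an $L$-repetition. Counting the labellings of $t_e$ with exactly $p$ restrictions costs a factor $\Theta(\rat_n^{p})$ relative to the free count: indeed restricting a leaf to $\Gamma_f$ costs a factor $\Theta(1/\rat_n)^{-1}=\Theta(\rat_n)$ per leaf exactly as in the proof of Lemma~\ref{lem:Jakub}, and forcing a repetition costs $\Theta(\rat_n)$ as well. Meanwhile the minimal tree $m$ contributes its own $r$ leaves, which must all be labelled by essential variables and all be distinct (a leaf-labelling of a size-$r$ tree, costing a factor $\Theta(\rat_n^{r})$ relative to $\Lab_{n,k_n}/\Lab_{n-r,k_n}$), except in model $\E$ where only the $R\langle f\rangle$ repetitions of $m$ are ``expensive'' while essential variables come for free inside an equivalence class — this is exactly why the exponent is $L(f)+p$ in $\G$ but $R\langle f\rangle+p$ in $\E$, matching the phenomenon already seen in Proposition~\ref{prop:tauto} and in the statements of Theorems~\ref{thm:thetaG}--\ref{thm:thetaE}.

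The second step is the generating-function estimate. Fix $m$, $s$, $\diamond$. The family of trees obtained is $t_e$ plugged into a fixed context of size $O(1)$, so its generating function is $z^{O(1)}$ times the generating function of $\{t_e\in L[\mc I] : t_e \text{ has exactly } p \ (L,\Gamma_f)\text{-restrictions}\}$. By sub-criticality of $L$ for $\mc I$, pointing $p$ pattern leaves and bounding the labellings (as in Equation~\eqref{eq:truc} and the lines following it, where $\tfrac{z^p}{p!}\partial_1^p\ell(z,I(z))$ appears) shows that the relevant generating function has the same square-root singularity $\nicefrac18$ as $I(z)$, with a computable leading constant; transfer theorems (\cite[Theorem VII.8]{FS09}, as already invoked in the proof of Proposition~\ref{prop:tauto}) give the $n$-th coefficient up to $(1+o(1))$. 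Summing over the finitely many triples $(m,s,\diamond)$ and collecting the labelling factors $\Lab_{n-(L(f)+p),k_n}/\Lab_{n,k_n}$ (model $\G$) resp. $\Lab_{n-(R\langle f\rangle+p),k_n}/\Lab_{n,k_n}$ (model $\E$), Propositions~\ref{prop:labelling2} and~\ref{prop:labelling} turn these ratios into $\rat_n^{L(f)+p}$ resp. $\rat_n^{R\langle f\rangle+p}$, producing the claimed asymptotic with $\alpha^\G$ (resp. $\alpha^\E$) the resulting sum of leading constants. Positivity of $\alpha^\G,\alpha^\E$ is immediate since at least one triple $(m,s,\diamond)$ and at least one valid labelling exist and every term is non-negative.

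I expect the main obstacle to be \emph{the lower bound / exact-order control}, not the upper bound. The upper bound $\mu_n(\mc E)=O(\rat_n^{L(f)+p})$ follows readily from Lemma~\ref{lem:Jakub}-type counting. For the matching lower bound one must show that the dominant contribution genuinely comes from trees of the stated shape, which requires two things: (a) that expanding a \emph{minimal} tree, rather than a larger tree computing $f$, dominates — this is the ``a typical tree computing $f$ is a minimal tree with one large tree plugged in'' heuristic stated at the start of Section~\ref{sec:main}, and must be made precise by showing that non-minimal skeletons cost extra powers of $\rat_n$; and (b) an inclusion–exclusion to pass from ``at least $p$ restrictions'' to ``exactly $p$ restrictions'', controlling the over-counting exactly as the $DC_n^{(3)}, DC_n^{(4)}$ correction terms were handled in the proof of Proposition~\ref{prop:tauto}, so that the error is $o(\rat_n^{L(f)+p})$. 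The bookkeeping of restrictions (a mix of ``labelled in $\Gamma_f$'' and ``repeated''), together with the distinction between $L(f)$ and $R\langle f\rangle$ in the two models, is where care is needed; everything else is an application of tools already in place.
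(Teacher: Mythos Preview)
Your approach is essentially the paper's: decompose $\mc E$ over triples $(m,s,\diamond)$ with $m\in\mc M_f$, count expansion trees $t_e$ with $p$ $(L,\Gamma_f)$-restrictions via the derivatives $\partial_1^i\ell(z,I(z))$, invoke sub-criticality of $L$ for $\mc I$ to get the coefficient asymptotics, and convert the labelling factor $\Lab_{n-L(f)-p,k_n}/\Lab_{n,k_n}$ (resp.\ $\Lab_{n-R\langle f\rangle-p,k_n}/\Lab_{n,k_n}$) into a power of $\rat_n$ via Propositions~\ref{prop:labelling2}--\ref{prop:labelling}. The paper even uses the same constant $\cst_f=2L(f)\,|\mc M_f|$ that your ``$O(1)$ triples'' bookkeeping would produce.

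One correction: your obstacle (a) is not part of this lemma. The family $\mc E$ is \emph{by definition} built from expansions of minimal trees, so there is nothing to prove here about non-minimal skeletons being negligible; that comparison belongs to the subsequent lemmas of Section~\ref{sec:main}, where one argues that a typical tree computing $f$ actually lies in (a variant of) $\mc E$. For the present lemma the only genuine work beyond the upper bound is your point (b): passing from ``at least $p$'' to ``exactly $p$'' restrictions and controlling the over-counting among the finitely many triples $(m,s,\diamond)$. The paper's own proof is in fact terse on exactly this point --- it writes an inequality $\mu_n(\mc E)\le \cst_f\sum_{i=p+1}^{2p}[\cdots]$ and then passes directly to the asserted $\sim$ --- so your plan is, if anything, more scrupulous than the published argument.
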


\begin{proof}
Let $E_n$ be the number of (resp. equivalence classes of) trees of size $n$ in $\mc{E}$. 
We will denote by $i$ the number of leaves that are involved in 
the $p$ $(L,\Gamma_f)$-restrictions of the expansion tree: $p+1\leq i\leq 2p$. 
Let $\gamma_f$ be the cardinal of~$\Gamma_f$.

{\bf In the model $\boldsymbol{\G}$}, for all large enough $n$,
\begin{align*}
&\mu_n(\mc{E}) 
= \frac{E_n}{A_n}
\leq \cst_f 
\sum_{i=p+1}^{2p}[z^{n-L(f)}]\frac{\partial ^i}{i!\partial x^i}\left(\ell(xz,I(z))\right)_{|x=1} 
\frac{(2\gamma_f)^p (2(k_n-\gamma_f))^{n-L(f)-p}}{I_n (2k_n)^n},
\end{align*}
where $\cst_f = 2 L(f) \cdot |\M_f|$ is an upper bound for the different
places in a minimal tree of $f$ where an expansion can be plugged in.
Since $L$ is sub-critical for $\mc{I}$, there exists a positive constant $\alpha$ such that
\[\sum_{i=p+1}^{2p}\frac{[z^{n-L(f)}]\nicefrac{\partial ^i}{i!\partial x^i}\left(\ell(xz,I(z))\right)_{|x=1}}{I_n} 
\sim \alpha\cdot \frac{I_{n-L(f)}}{I_n} \sim \alpha\left(\frac18\right)^{L(f)} >0\]
asymptotically when $n$ tends to infinity. 
Therefore, in view of Section~\ref{sec:technical},
we have
\[\mu_{n}(\mc{E}) \sim \alpha \cdot \rat_n^{L(f) + p}.\]

{\bf In the model $\boldsymbol{\E}$}, we have, with the same reasoning:
\[
\mu_n\langle \mc{E} \rangle 
= \frac{E_n}{A_n}
\leq \cst_f 
\sum_{i=p+1}^{2p}[z^{n-L(f)}]\frac{\partial ^i}{i!\partial x^i}\left(\ell(xz,I(z))\right)_{|x=1} 
\frac{2^{p+R\langle f\rangle} \cdot \Lab_{n-p-R\langle f\rangle,k_n}}{I_n \cdot \Lab_{n,k_n}},
\]
from which we state the same conclusion as for the model $\G$.
\end{proof}

Consider the family $\mc{E}$ of trees obtained by replacing a sub-tree $s$ by $s\land t_e$
where $t_e$ is a simple tautology into a minimal tree of $f$. 
Since a simple tautology has at least one $S$-repetition,
thanks to Lemma~\ref{lem:restrictions}, there exists two positive constants $\alpha^\G$ and $\alpha^\E$
 such that
\[\mu_n^\G(\mc{E})  \sim  \alpha^\G \cdot \rat_n^{L(f)+1} \text{ in model }\G,\]
and
\[\mu_n^\E \langle \mc{E} \rangle  \sim \alpha^\E \cdot \rat_n^{R\langle f\rangle+1} \text{ in model }\E.\]

Thanks to Lemma~\ref{lem:Jakub}, we know that terms computing $f$ with more than $R(f)+2$
repetitions are negligible in front of the above family. Therefore, since trees with no leaf on
the $(r+2)^{\text{th}}$ level are negligible, we have proved
weaker versions of Theorems~\ref{thm:thetaG} and~\ref{thm:thetaE},
where the equivalent for the probabilities is 
replaced by an upper and a lower bounds of the same order.
The rest of the proofs consists in sharpening both bounds.


The key point of the proof of Theorems~\ref{thm:thetaG} and~\ref{thm:thetaE}
is that a typical tree computing a function $f$ is a minimal tree of this function
which has been expanded once. In the following, we will only consider two different expansions:
\begin{definition}[cf. Figure~\ref{fig:exp}]
Recall that an {\bf expansion} of a tree $t$ is a tree obtained by replacing a sub-tree $s$ of $t$ by $s\diamond t_e$ (or $t_e\diamond s$) where $\diamond\in\{\land,\lor\}$.

An expansion is a {\bf T-expansion} if the expansion tree $t_e$ is a simple tautology and the connective $\diamond$ is $\land$ (or a simple contradiction and the connective $\diamond$ is $\lor$).

An expansion is a {\bf X-expansion} if the expansion tree $t_e$ has a leaf linked to the root by a $\land$-path (resp. a $\lor$-path) and the $\diamond$ connective is a $\lor$ (resp. $\land$).
\end{definition}

\begin{figure}[t]
\begin{center}
\begin{tabular}{m{5cm}m{0.5cm}m{5cm}}
\begin{tikzpicture}[level 1/.style={sibling distance=2cm}, level 2/.style={sibling distance=1cm,level distance=1cm}, scale=0.6]
\node [circle] (z){root}
	child[level distance=5cm]
	child[level distance=2.5cm] {node (x){$\upsilon$}
		child[level distance=1cm]
		child[level distance=1cm]
		}
	child[level distance=2.5cm] {node (m){} edge from parent[draw=none]}
	child[level distance=5cm]
;
\draw (z-1)--(z-4);
\draw[dashed,white] (z)--(x);
\draw (x-1)--(x-2);
\end{tikzpicture}
&
{\Large $\leadsto$}
&
\hspace*{1cm}
\begin{tikzpicture}[level 1/.style={sibling distance=2cm}, level 2/.style={sibling distance=1cm,level distance=1cm}, scale=0.6]
\node [circle] (z){root}
	child[level distance=5cm]
	child[level distance=2.5cm] {node (diamant){$\diamond$}
		child {node (bla){$t_e$}}
		child {node (x){$\upsilon$}
			child[level distance=1cm]
			child[level distance=1cm]
			}
		}
	child[level distance=2.5cm] {node (m){} edge from parent[draw=none]}
	child[level distance=5cm]
;
\draw (z-1)--(z-4);
\draw[dashed,white] (z)--(x);
\draw (x-1)--(x-2);
\end{tikzpicture}
\end{tabular}
\end{center}
\caption{An expansion at node $\upsilon$. Note that the expansion tree $t_e$ could have been on the right size of the $\diamond$-connective instead of its left side.}
\label{fig:exp}
\end{figure}
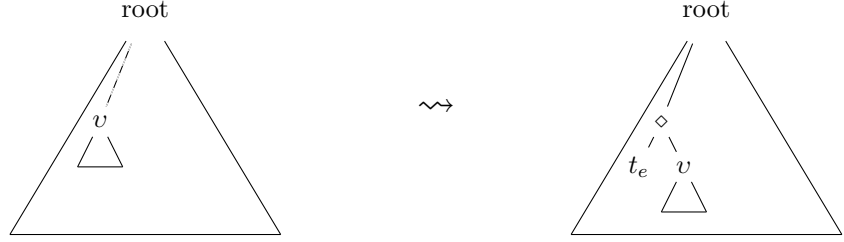

\begin{corollary}
The ratio of the (resp. equivalence class of) minimal trees of $f$ expanded once satisfies that
there exists two positive constant $\lambda_f$ and $\lambda_{\langle f\rangle}$ such that
asymptotically when $n$ tends to infinity:
\[\mu_{n}^\G (E[\mc{M}_f]) = \lambda_f\cdot \rat_n^{L(f)+1} + o\left(\rat_n^{L(f)+1}\right),\]

\[\mu_{n}^\E \langle E[\mc{M}_f]\rangle = \lambda_{\langle f\rangle}\cdot \rat_n^{R\langle f \rangle+1} + o\left(\rat_n^{R \langle f\rangle +1}\right).\]
\end{corollary}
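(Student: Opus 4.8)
The proof is in two halves: an upper bound and a matching lower bound, both of the form $\lambda\cdot\rat_n^{L(f)+1}$ (resp. $\lambda_{\langle f\rangle}\cdot\rat_n^{R\langle f\rangle+1}$). The rough upper bound $\mu_n(E[\mc M_f])=\mathcal O(\rat_n^{L(f)+1})$ has already been established just before the statement, via Lemma~\ref{lem:restrictions} applied to a $\land$-expansion by a simple tautology, together with Lemma~\ref{lem:Jakub} to discard trees with too many repetitions. So the real work is to produce an \emph{asymptotic equivalent} rather than two bounds of the same order, i.e. to pin down the constant $\lambda_f$. My plan is to enumerate $E[\mc M_f]$ explicitly. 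Fix $r=L(f)$ and let $\mc M_f$ be the (finite) set of minimal trees of $f$. A tree of $E[\mc M_f]$ of size $n$ is obtained by choosing a minimal tree $t_0\in\mc M_f$ (finitely many choices), a subtree-position $\upsilon$ in $t_0$ and a side, a connective $\diamond\in\{\land,\lor\}$, and an expansion tree $t_e$ of size $n-r$; then the $n$ leaves must be labelled (in model $\G$: freely with $k_n$ variables and polarities; in model $\E$: up to the equivalence relation). The generating function for the tree-structures is then $r\,(\text{number of positions})\cdot I(z)^{?}\cdot z^{r}$ up to the combinatorics of where the expansion is plugged, and the count of size-$n$ structures is $\Theta(I_{n-r})$ because plugging a fixed finite gadget onto a Catalan-type family only shifts the singularity exponent by a polynomial factor — by transfer (\cite[Theorem VII.8]{FS09}) $I_{n-r}/I_n\to(1/8)^r$. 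Multiplying by the labelling factor $\Lab_{n-r,k_n}$ (model $\G$) or $\Lab_{n-r-R\langle f\rangle,k_n}$ (model $\E$, accounting for the $R\langle f\rangle$ forced repetitions in the minimal tree) and dividing by $A_n=I_n\Lab_{n,k_n}$ gives a constant times $\Lab_{n-r,k_n}/\Lab_{n,k_n}$, which by Proposition~\ref{prop:labelling2} is $(1+o(1))\rat_n^{r}$ — wait, one power short; the extra power of $\rat_n$ comes from the expansion tree $t_e$ needing to carry a repetition (a simple tautology has an $S$-repetition), exactly as in Lemma~\ref{lem:restrictions}, which is why the exponent is $r+1$.

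\textbf{Key steps, in order.} First, I would invoke the rough bound already in hand so that only the constant is at stake; concretely, it suffices to compute $\lim_n \mu_n(E[\mc M_f])/\rat_n^{L(f)+1}$ and show it exists and is positive. Second, set up the bijective-style decomposition of $E[\mc M_f]$: a tree in this family is determined by $(t_0,\text{plug-position},\diamond,t_e)$ with the labelling of leaves on top — but here is the subtlety, the decomposition is \emph{not} injective, because the same tree can arise from different minimal trees or different plug-positions, and also because an expanded tree may itself have extra repetitions making it a T-expansion or X-expansion in more than one way. So the plan is: (a) prove that the set of trees with $>R(f)+2$ repetitions, or with a leaf on the $(r+2)$-nd pattern level, is $o(\rat_n^{r+1})$ using the Proposition just above and Lemma~\ref{lem:Jakub}; this is the "typical tree is a once-expanded minimal tree" principle stated at the start of §\ref{sec:main}. (b) On the complementary, dominant set, show the decomposition \emph{is} essentially unique — the over-counting in step (a)'s bad set already absorbed the ambiguous cases — so an inclusion-exclusion on the finite number of (minimal tree, position) pairs yields a clean generating-function identity. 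Third, extract coefficients: each contribution is $c\cdot[z^{n-r}](\text{a function with the same square-root singularity }1/8\text{ as }I)\cdot(\text{labelling factor})$, and transfer plus Proposition~\ref{prop:labelling2}/\ref{prop:labelling} turn this into $\lambda\,\rat_n^{r+1}(1+o(1))$ with $\lambda>0$ an explicit sum of the $c$'s. Fourth, in model $\E$ redo the labelling bookkeeping: the minimal tree has $R\langle f\rangle$ forced variable-repetitions, so the exponent of $\rat_n$ becomes $R\langle f\rangle+1$, and the multiplicities coming from the Stirling-number structure of $\Lab_{n,m}$ are handled by Proposition~\ref{prop:labelling2} $\E$, which is model-agnostic in the sense that it only feeds us the ratio $\Lab_{n-p,k_n}/\Lab_{n,k_n}$.

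\textbf{The main obstacle.} The hard part is \textbf{de-duplicating the count} — showing that $\mu_n(E[\mc M_f])$ is genuinely asymptotic to a single clean expression and not merely sandwiched. The natural enumeration over (minimal tree, plug-position, $t_e$) multi-counts, and one must argue that all the multi-counting is confined to the negligible regime (trees with an extra repetition, or with the expansion tree itself admitting several "simple-tautology" certificates). This is where the Proposition preceding the Corollary does its work: it forces a tree computing $f$ with a leaf deep in the $R$-pattern to have $\geq r+1$ $(R,\Gamma_f)$-restrictions, hence to lie in the $o(\rat_n^{r+1})$ tier by Lemma~\ref{lem:Jakub} and Lemma~\ref{lem:restrictions}; so on the dominant tier the expansion is unique up to symmetries that can be counted exactly once. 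A secondary technical nuisance is ensuring the transfer theorem applies uniformly in $n$ while $k_n$ (and hence the labelling factor) also varies — but this is precisely the point of separating tree-structure from labelling, and Section~\ref{sec:technical} has already made $\Lab_{n-r,k_n}/\Lab_{n,k_n}$ behave, so the analytic-combinatorics part and the counting part decouple cleanly. The positivity of $\lambda_f$ is immediate since at least one valid expansion (T-expansion by a simple tautology) contributes a strictly positive term and all contributions have the same sign after the inclusion-exclusion collapses.
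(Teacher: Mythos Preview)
You have over-engineered this. The paper's proof is a single sentence: ``This corollary is a direct consequence of Lemma~\ref{lem:restrictions}.'' That lemma already asserts an asymptotic \emph{equivalent} $\mu_n(\mc E)\sim \alpha\cdot\rat_n^{L(f)+p}$ (resp.\ $\rat_n^{R\langle f\rangle+p}$) for the family of once-expanded minimal trees whose expansion tree carries exactly $p$ $(L,\Gamma_f)$-restrictions. Taking $p=1$ gives the leading term; taking $p\geq 2$ gives contributions of order $\rat_n^{L(f)+p}=o(\rat_n^{L(f)+1})$; summing yields the corollary with no further work.

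Your elaborate de-duplication programme --- bijective decomposition, inclusion--exclusion over (minimal tree, plug-position) pairs, invoking the $(r+2)$-th-level proposition, separating the ``typical'' regime --- is aimed at the wrong target. You are essentially trying to prove the \emph{next} two lemmas (that $\mathbb P_n(f)\sim\mu_n(E[\mc M_f])$ and that the expansion must be a T- or X-expansion), not this corollary. The corollary itself says nothing about which trees compute $f$; it is purely a statement about the ratio of the family of once-expanded minimal trees, and Lemma~\ref{lem:restrictions} handles that family directly. Your ``main obstacle'' (multi-counting) is a legitimate concern, but it belongs inside the proof of Lemma~\ref{lem:restrictions}, where the paper has already absorbed it (admittedly somewhat tersely, passing from a $\leq$ to a $\sim$ by the same over-counting argument used for simple tautologies in Proposition~\ref{prop:tauto}); it is not new work for the corollary.

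A minor slip: you write that the ``rough upper bound'' comes from the $\land$-expansion by a simple tautology --- that family gives a \emph{lower} bound for $\mu_n(E[\mc M_f])$, not an upper one.
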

This corollary is a direct consequence of Lemma~\ref{lem:restrictions}.
%
%

\begin{lemma}
Let $f$ be a fixed Boolean function and $\mc{M}_f$ the set of minimal trees of $f$.
\[\mathbb{P}^{\G}_n(f) \sim \mu^\G_n(E[\mc{M}_f]) \text{ when }n\to+\infty,\]
and
\[\mathbb{P}^{\E}_n\langle f\rangle \sim \mu_{n}^\E \langle E[\mc{M}_f]\rangle \text{ when }n\to+\infty.\]
\end{lemma}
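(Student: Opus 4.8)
The statement to prove is that $\mathbb{P}_n(f)\sim\mu_n(E[\mc{M}_f])$ in both models, i.e. that a typical tree (or class) computing $f$ really is a minimal tree of $f$ that has been expanded exactly once. The plan is to establish matching upper and lower bounds. The lower bound is essentially already in hand: the family $E[\mc{M}_f]$ of once-expanded minimal trees consists of trees computing $f$, so $A_n(f)\geq \#\{E[\mc{M}_f]\text{ of size }n\}$, and the Corollary preceding this Lemma gives $\mu_n(E[\mc{M}_f])=\lambda_f\rat_n^{L(f)+1}+o(\rat_n^{L(f)+1})$ (resp.\ with $R\langle f\rangle$ in model $\E$). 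So the crux is the upper bound: every tree computing $f$, up to a family of ratio $o(\rat_n^{L(f)+1})$, is a once-expanded minimal tree.

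\textbf{Main steps for the upper bound.} First I would use Lemma~\ref{lem:Jakub} (with the pattern $R=N^{(r+1)}[N\oplus P]$ or its squared version $\bar R$, and $r=L(f)$ resp.\ $R\langle f\rangle$) to discard trees computing $f$ with at least $r+2$ $R$-repetitions: their ratio is $\mathcal{O}(\rat_n^{r+2})=o(\rat_n^{r+1})$. Next, the Proposition just above shows that a tree computing $f$ with a leaf on the $(r+2)^{\text{th}}$ level of the $R$-pattern has at least $r+1$ $(R,\Gamma_f)$-restrictions; combined with Lemma~\ref{lem:restrictions} (or the restriction-counting analogue of Lemma~\ref{lem:Jakub}) such trees also contribute only $\mathcal{O}(\rat_n^{r+1})$ with the \emph{wrong} constant structure — more precisely, I must argue they are negligible unless the restriction count is exactly $r$ coming from essential variables plus exactly one repetition from the expansion tree. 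So I restrict attention to trees computing $f$ with no leaf below level $r+1$ of $R$ and with at most $r+1$ $R$-restrictions. For such a tree, the sub-pattern-structure forces it to decompose as a minimal-like skeleton of $f$ (the $r$ essential-variable leaves, possibly with one repetition) into which at most one ``large'' subtree — the expansion tree $t_e$ — is plugged, the expansion being of $T$-type or $X$-type as in the Definition of T/X-expansions. Then one checks that the number of such trees that do \emph{not} arise as a genuine once-expansion of an element of $\mc{M}_f$ (e.g.\ the skeleton is not itself minimal, or the expansion tree fails to carry its one repetition) is again of smaller order, by a further application of Lemma~\ref{lem:Jakub}/Lemma~\ref{lem:restrictions} producing an extra factor $\rat_n$.

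\textbf{Assembling and identifying the constant.} Putting the two directions together yields $A_n(f)=\#\{E[\mc{M}_f]\}_n\,(1+o(1))+o(A_n\rat_n^{r+1})$, hence $\mathbb{P}_n(f)=\mu_n(E[\mc{M}_f])+o(\rat_n^{r+1})$, and since $\mu_n(E[\mc{M}_f])$ is of exact order $\rat_n^{r+1}$ by the Corollary, this is the claimed asymptotic equivalence. The same argument runs verbatim in model $\E$, replacing $L(f)$ by $R\langle f\rangle$ throughout and counting equivalence classes instead of trees; the technical inputs (Propositions~\ref{prop:labelling2} and~\ref{prop:labelling} for $\rat_n$, and the two forms of Lemma~\ref{lem:Jakub}) are already stated for both models.

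\textbf{Main obstacle.} The delicate point is not any single estimate but the combinatorial bookkeeping in the upper bound: one must show that \emph{every} tree computing $f$, once we have cut off the negligible families, literally factors through a minimal tree of $f$ expanded exactly once, and that the overcounting between different (skeleton, expansion) decompositions of the same tree is itself of lower order. This is where Kozik's pattern machinery, now in the form adapted in Section~\ref{sec:patterns}, does the real work — isolating the ``one large subtree'' and certifying that the rest is a minimal skeleton — and getting the restriction/repetition counting exactly right (exactly $r$ restrictions from essential variables, exactly one extra repetition from the expansion, nothing more) is the step I expect to require the most care.
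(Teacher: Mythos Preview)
Your high-level plan is the paper's: a lower bound from an explicit subfamily of expansions, and an upper bound via Lemma~\ref{lem:Jakub} to cut off trees with too many pattern repetitions. The problem is the middle of your upper-bound argument.

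The sentence ``restrict attention to trees computing $f$ with no leaf below level $r+1$ of $R$'' is wrong. A typical tree computing $f$ is a minimal tree with a \emph{large} expansion $t_e$ grafted in, and the leaves of $t_e$ sit at arbitrarily deep pattern levels; you cannot throw these trees away, and the paper does not. Instead it works one level deeper, with $\bar R = N^{(r+1)}[(N\oplus P)^2]$: after discarding (via Lemma~\ref{lem:Jakub}) trees with too many $\bar R$-repetitions, any surviving tree $t$ has exactly $r+1$ of them, all already visible in $R$, so the deepest level carries no new restriction. The step you flag as the ``main obstacle'' but do not supply is the \emph{wild-card simplification}: replace by $\star$ the place-holders at the deepest level and every remaining non-essential, non-repeated leaf, then simplify. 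The resulting tree $t^\star$ still computes $f$ and has at most $L(f)$ leaves, hence is a minimal tree. One then argues that the lowest common ancestor $\upsilon$ of all the $\star$'s must have been suppressed during simplification (otherwise two disjoint blocks of wild-cards would each erase a non-wild-card leaf, leaving $t^\star$ with fewer than $L(f)$ leaves); hence everything that is not the minimal skeleton lives in the single subtree rooted at $\upsilon$, which is the unique expansion $t_e$. This structural argument --- not a depth cutoff --- is the actual content of the proof.

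A smaller point on the lower bound: it is not true that every once-expanded minimal tree computes $f$ (grafting $s\land t_e$ with $t_e$ not a tautology generally changes the function). The lower bound of order $\rat_n^{r+1}$ comes from the subfamily of $T$-expansions (conjoin a simple tautology), which \emph{do} compute $f$; this is the family the paper exhibits just before the Corollary.
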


\begin{proof}
Let $t$ be a tree computing $f$. 
Such a tree must have at least $R(f)+1$ $\bar{R}$-repetitions. 
Moreover, thanks to Lemma~\ref{lem:Jakub}, 
trees with at least $R(f)+2$ $\bar{R}$-repetitions are negligible. 
We will show that a tree with exactly $R(f)+1$ $\bar{R}$-repetitions is in fact a minimal tree expanded once.

The term $t$ must also have $R(f)+1$ $R$-repetitions and therefore, there is no additional repetition when
we consider the $(r+3)^{\text{th}}$ level of the $\bar{R}$-pattern.

Let $i$ be the first level such that the number of $(N^{(i)},\Gamma_f)$-restrictions is equal to the number of
$N^{(i-1)}$-restrictions. Since there must be a restriction on the first level, $i\leq r+1$.

\vspace{\baselineskip}
{\bf First Case: }Assume that an essential variable $\alpha$ appears on the pattern leaves of the $(r+3)^{\text{th}}$ level.  
Therefore, $t$ has at most $L(f)$ $(N^{(i)},\Gamma_f)$-restrictions. Let us replace the place-holders of the $(i-1)^{\text{th}}$ level by $\false$ and assign all the remaining non-essential variables to $\false$. Simplify the tree to obtain a new and/or tree denoted by $t^{\star}$. The leaves of this tree are former $N^{(i-1)}$-pattern leaves of $t$, labelled by essential variables and $t^{\star}$ still computes $f$. But the variable $\alpha$ is essential for $f$: thus it must still appear in the leaves of $t^{\star}$, and by deleting its occurrence in the leaves of the $(r+3)^{\text{th}}$ level, we deleted one repetition. Therefore, $t^{\star}$ has at most $L(f)-1$ leaves which is impossible!

\vspace{\baselineskip}
{\bf Second Case: }There is no essential variable among the the pattern leaves of the $(r+3)^{\text{th}}$ level. Since there is also no repetition at this level, we can replace the place-holders of the level $(r+3)$ to wild-cards. We also replace the remaining non essential and non-repeated variables by wild-cards. We then simplify the wild-cards and obtained a simplified tree $t^{\star}$, computing $f$, with no wild-cards and which leaves are former leaves of the trees $t$, essential or repeated. During the simplification process, we have deleted at least one of these leaves and therefore $t^{\star}$ has at most $L(f)$ leaves: it is a minimal tree of~$f$.

\vspace{\baselineskip}
Let us consider the following fact:
The lowest common ancestor of all the wild-cards in $t$ has
been suppressed during the simplification process.
Assume that this fact is false: then two wild-cards have been simplified independently during the simplification process, and thus, at least two essential or repeated variables have been deleted. The tree $t^{\star}$ has thus at most $L(f)-1$ leaves and computes $f$, which is impossible since $L(f)$ is the complexity of~$f$.
Let us denote by $t_e$ the sub-tree rooted at $\upsilon$ the lowest common ancestor of the wild-cards. Thus a typical tree computing $f$ is a minimal tree of~$f$ in which we have plugged a specific expansion tree $t_e$.
\end{proof}

\begin{lemma}
Let $t$ be a typical tree computing $f$. The expansion tree $t_e$ is either a simple tautology (or simple contradiction), or an $x$-expansion - i.e. a tree with one $\land$-leaf (resp. $\lor$-leaf) labelled by an essential variable of~$f$.
\end{lemma}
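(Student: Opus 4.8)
The plan is to analyse the structure of the expansion tree $t_e$ sitting below the lowest common ancestor $\upsilon$ of all the wild-cards, using the information accumulated in the previous lemmas: a typical tree $t$ computing $f$ has exactly $R(f)+1$ $\bar R$-repetitions (equivalently $R(f)+1$ $(R,\Gamma_f)$-restrictions after counting essential variables), and $t$ is a minimal tree of $f$ into which $t_e$ has been plugged by an expansion $s\mapsto s\diamond t_e$ (or $t_e\diamond s$) with $\diamond\in\{\land,\lor\}$. By the $\land$/$\lor$--$x$/$\bar x$ duality it suffices to treat the case $\diamond=\land$; the case $\diamond=\lor$ is symmetric.

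First I would localise the restrictions: since the underlying minimal tree already accounts for $L(f)=R(f)+E(f)$ restrictions ($E(f)$ essential variables and $R(f)$ repetitions in a minimal tree, by the definition of multiplicity), and $t$ has in total only $R\langle f\rangle+1$ restrictions \emph{beyond} what a minimal tree forces — more precisely, $t_e$ together with the expansion must create exactly one extra restriction, otherwise $t$ would have too many $\bar R$-repetitions and fall into the negligible class of Lemma~\ref{lem:Jakub}. So the whole point is: \emph{$t_e$ carries exactly one $(N\oplus P,\Gamma_f)$-restriction, and no more.} Then I would run the same $\false$/$\true$/wild-card simplification argument used in the two preceding lemmas, but localised inside $t_e$. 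Because $\diamond=\land$, plugging $t_e$ at $\upsilon$ changes the computed function unless $t_e$ itself cannot be made to compute $\true$ by assigning its non-restricted leaves freely; dually, inside $t_e$, assigning all its $N$-pattern leaves to $\false$ would make $t_e$ compute $\false$, hence make $t$ compute $f$ with a strictly smaller tree — impossible — \emph{unless} that assignment is blocked by the single restriction $t_e$ is allowed to carry.

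This forces a dichotomy on the nature of that unique restriction inside $t_e$. Either (a) the restriction is a repetition, i.e. two $N$-pattern leaves of $t_e$ labelled by $\alpha$ and $\bar\alpha$ for some literal $\alpha$, joined to the root of $t_e$ by $\lor$-nodes — in which case minimality together with Lemma~\ref{lem:Jakub} (trees with a further repetition are negligible) forces $t_e$ to be exactly a simple tautology, since any additional structure above those two leaves would either create a second restriction or allow a simplification contradicting $L(f)$; or (b) the restriction is an essential variable of $f$ appearing on a single $N$-pattern leaf of $t_e$. In case (b) I would argue, again by the simplification/wild-card argument, that $t_e$ can have no internal structure other than a $\lor$-path from its root down to that essential-variable leaf (all other leaves of $t_e$ being free wild-cards, which by definition of $\upsilon$ as the \emph{lowest} common ancestor must actually have been simplified away, leaving that single leaf linked to the root by a $\lor$-path); in the $\diamond=\land$ convention this is precisely what the statement calls an $x$-expansion. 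Collecting (a) and (b), and invoking duality for $\diamond=\lor$, gives the claim.

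The main obstacle I expect is \textbf{pinning down the "exactly one restriction in $t_e$" step cleanly}: one has to be careful that restrictions are not double-counted between the host minimal tree and $t_e$ (a leaf of the minimal tree and a leaf of $t_e$ could a priori carry the "same" repetition), and that the wild-card simplification, when performed inside $t_e$, genuinely deletes a restricted leaf rather than collapsing harmlessly — this is exactly the kind of subtlety handled in the proof of the previous lemma, and I would reuse that machinery verbatim, emphasising that the minimality of the host tree is what prevents the bad cases. The rest (identifying the forced shape of $t_e$ from "one restriction, sub-critical pattern, cannot be simplified") is a routine repetition of the case analysis already carried out for the $(r+2)$-level and $(r+3)$-level arguments.
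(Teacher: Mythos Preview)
Your three-case split on the number of $(N\oplus P,\Gamma_f)$-restrictions carried by $t_e$ (none, at least two, exactly one) is exactly the paper's organisation, and the dichotomy ``repetition vs.\ essential variable'' in the third case is also the same. The paper's own proof is itself only a sketch in that third case (``one can show that\ldots''), so at the level of detail given there your proposal matches.

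Two points deserve tightening. First, in your case~(b) you assert that $t_e$ ``can have no internal structure other than a $\lor$-path'' and that the remaining wild-card leaves ``must actually have been simplified away''. This over-claims and conflates two different simplifications: the wild-card simplification of the \emph{previous} lemma is what carved $t_e$ out of $t$ and identified $\upsilon$; it does not further prune $t_e$ itself. An $X$-expansion (for $\diamond=\land$) only requires \emph{one} leaf of $t_e$ to sit on a pure $\lor$-path to the root of $t_e$ and to carry an essential label --- the rest of $t_e$ may be an arbitrary tree with no restrictions. The correct argument is the one you use for the zero-restriction case, localised: if that distinguished essential leaf were \emph{not} on a pure $\lor$-path, the first $\land$-node above it has an unrestricted sibling subtree which can be forced to $\false$, collapsing $t_e$ to $\false$ and contradicting minimality of the host tree. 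Second, the ``at least two restrictions'' case is dispatched in the paper not by the global repetition bound (your appeal to Lemma~\ref{lem:Jakub}) but directly by Lemma~\ref{lem:restrictions}, which is tailored to expansions of minimal trees; your worry about double-counting restrictions between the host minimal tree and $t_e$ is therefore unnecessary, since that lemma already separates the two contributions.
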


\begin{proof}
As shown in the former lemma, a typical tree computing $f$ is a minimal tree of $f$ on which has been plugged an expansion tree $t_e$.

{\bf First Case:} Let us assume that $t_e$ has no $(N\oplus P)$-repetition and no essential variable among its $(N\oplus P)$-pattern leaves. Then, we can replace $t_e$ by a wild-card and simplify this wild-card. This simplification suppresses at least one other leaf of the tree: the obtained tree is then smaller than the original minimal tree, and still computes $f$. It is impossible.

{\bf Second Case:} Let us assume that $t_e$ has at least two $((N\oplus P)^2,\Gamma_f)$-restrictions.
Thanks to Lemma~\ref{lem:restrictions}, this family of expanded trees is negligible.

{\bf Third Case:} Let us assume that $t_e$ has exactly one $((N\oplus P)^2,\Gamma_f)$-restriction.
Then it must be a $(N\oplus P,\Gamma_f)$-restriction (see First Case).
\begin{itemize}
\item if it is a repetition, than one can show that it must be a simple tautology or a simple contradiction.
\item if it is an essential variable, one can show that it must be an $X$-expansion.
\end{itemize}
\indent \hfill\qed
\end{proof}

\section{Conclusion}
In this paper, we have generalised the Catalan tree distribution on Boolean functions following two directions:
\begin{itemize}
\item letting the number of variables and the size of the Boolean trees tend to infinity together.
 It has allowed us to answer a fundamental satisfiability problem;
\item  the natural equivalence relation on Boolean trees and functions that we have introduced
exhibits a very interesting threshold/saturation phenomenon for which we have no intuitive explanation up to now.
\end{itemize}
It is interesting to see that these two models can be analysed with very similar methods, 
namely, the ones used in the literature to study the classical Catalan tree model: 
Analytic Combinatorics and Kozik's pattern theory.
The key idea that permitted to generalise those methods to our two new models was to dissociate 
the shapes of the trees and their leaf-labelling.

We strongly believe that our methods could be generalised further, 
for example to other logical systems (as the implication model, see e.g.~\cite{FGGG12,GK12}), 
or to non-binary or non-planar uniform trees (see~\cite{GGKM15}).
Our confidence rely on the fact that those models, in the $(k_n)_{n\geq 1}$
constant case, can be analysed with analytic combinatorics and pattern theory
(or tools based on the same key ideas) as well,
and we have shown here how to generalise those methods to a more general sequence $(k_n)_{n\geq 1}$.

A more challenging generalisation would be to consider different probability distributions on binary plane trees. 
For example, in view of~\cite{FGG09,CGM14} we conjecture that the random binary search tree of size $n$,
labelled with $(k_n)_{n\geq 1}$ variables defines a very interesting satisfiability problem, 
with a phase transition \emph{{\`a} la} $K$--{\sc sat}. 
It would be very interesting (but, we expect, non trivial) to prove such a conjecture.
Even more challenging would be to ask what effect the introduction of the equivalence relation has on this phase transition?

\vspace{\baselineskip}
\noindent{\bf Acknowledgements: }The authors are very grateful to Pierre Lescanne for fruitful discussions about this project, to Brigitte Chauvin and Dani{\`e}le Gardy for proof-reading an early version of this manuscript and to the anonymous referees for their very insightful comments. The second author also wishes to thank EPSRC for support through the grant EP/K016075/1.

\bibliographystyle{spmpsci}
\bibliography{boolean}

\end{document}